\newtheorem{theorem}{Theorem}[section]
\newtheorem{lemma}[theorem]{Lemma}
\newtheorem{proposition}[theorem]{Proposition}
\theoremstyle{definition}
\newtheorem{definition}[theorem]{Definition}
\newtheorem{remark}[theorem]{Remark}
\numberwithin{equation}{section}
\title{Quantitative stratification of $F$-subharmonic functions}
\author{Jianchun Chu}
\begin{document}

\date{}

\maketitle

\begin{abstract}
In this paper, we study the singular sets of $F$-subharmonic functions $u: B_{2}(0^{n})\rightarrow\mathbf{R}$, where $F$ is a subequation. The singular set $\mathcal{S}(u)\subset B_{2}(0^{n})$ has a stratification $\mathcal{S}^{0}(u)\subset\mathcal{S}^{1}(u)\subset\cdots\subset\mathcal{S}^{k}(u)\subset\cdots\subset\mathcal{S}(u)$, where $x\in\mathcal{S}^{k}(u)$ if no tangent function to $u$ at $x$ is $(k+1)$-homogeneous. We define the quantitative stratification $\mathcal{S}_{\eta,r}^{k}(u)$ and $\mathcal{S}_{\eta}^{k}(u)=\cap_{r}\mathcal{S}_{\eta,r}^{k}(u)$.

When homogeneity of tangents holds for $F$, we prove that $dim_{H}\mathcal{S}^{k}(u)\leq k$ and $\mathcal{S}(u)=\mathcal{S}^{n-p}(u)$, where $p$ is the Riesz characteristic of $F$. And for the top quantitative stratification $\mathcal{S}_{\eta}^{n-p}(u)$, we have the Minkowski estimate $\text{Vol}(B_{r}(\mathcal{S}_{\eta}^{n-p}(u)\cap B_{1}(0^{n})))\leq C\eta^{-1}(\int_{B_{1+r}(0^{n})}\Delta u)r^{p}$.

When uniqueness of tangents holds for $F$, we show that $S_{\eta}^{k}(u)$ is $k$-rectifiable, which implies $\mathcal{S}^{k}(u)$ is $k$-rectifiable.

When strong uniqueness of tangents holds for $F$, we introduce the monotonicity condition and the notion of $F$-energy. By using refined covering argument, we obtain a definite upper bound on the number of $\{\Theta(u,x)\geq c\}$ for $c>0$, where $\Theta(u,x)$ is the density of $F$-subharmonic function $u$ at $x$.

Geometrically determined subequations $F(\mathbb{G})$ is a very important kind of subequation (when $p=2$, homogeneity of tangents holds for $F(\mathbb{G})$; when $p>2$, uniqueness of tangents holds for $F(\mathbb{G})$). By introducing the notion of $\mathbb{G}$-energy and using quantitative differentation argument, we obtain the Minkowski estimate of quantitative stratification $\text{Vol}(B_{r}(\mathcal{S}_{\eta,r}^{k}(u))\cap B_{1}(0^{n}))\leq Cr^{n-k-\eta}$.
\end{abstract}

\tableofcontents

\bigskip

\section{Introduction}
\subsection{Background}
Recently, Harvey and Lawson \cite{HL1,HL2} (see also \cite{HL5,HL6,HL7,HL8,HL9,HL10,HL11,HL12,HL3,HL13,HL4,HL14}) established a theory of elliptic equations. The aim of this theory is to study the behavior of subsolutions in the viscosity sense. They introduced the definitions of Riesz characteristic, tangential $p$-flow, tangent and density function. And many interesting theorems, formulas and properties of subsolutions, tangents and density functions were established.

In this theory, there is a very important kind of examples called geometrically defined subequations (see \cite[Example 4.4]{HL1} and \cite{HL2}). To be specific, let $\mathbb{G}$ be a compact subset of the Grassmannian manifold $G(p,\mathbf{R}^{n})$ such that $\mathbb{G}$ is invariant under a subgroup $G\subset O(n)$ acting transitively on the sphere $S^{n-1}\subset\mathbf{R}^{n}$. The geometric subequation determined by $\mathbb{G}$ is defined by
\begin{equation*}
F(\mathbb{G})=\{A\in \text{Sym}(n)~|~tr_{W}(A)\geq 0 \text{~for any $W\in\mathbb{G}$}\},
\end{equation*}
where $\text{Sym}(n)$ denotes the space of symmetric $n\times n$ matrices with real entries and $tr_{W}(A)$ denotes the trace of $A|_{W}$. Let $u$ be a $F(\mathbb{G})$-subharmonic function, by the Restriction Theorem 3.2 in \cite{HL4}, we obtain $u|_{W}$ is subharmonic on $W$ for any $W\in\mathbb{G}$. $F(\mathbb{G})$-subharmonic functions are usually called $\mathbb{G}$-plurisubharmonic functions. And as we can see, convex, $\mathbb{C}$-plurisubharmonic and $\mathbb{H}$-plurisubharmonic functions are all special cases of $\mathbb{G}$-plurisubharmonic functions.

In \cite{HL1}, Harvey and Lawson introduced the definitions of homogeneity, uniqueness and strong uniqueness of tangents. In \cite{HL2}, for geometrically defined subequations $F(\mathbb{G})$, it was proved that homogeneity of tangents holds when $p=2$ and uniqueness of tangents holds when $p>2$. They also proved strong uniqueness of tangents holds for many subequations (see \cite[Theorem 13.1]{HL1} and \cite[Theorem 3.2, Theorem 3.12]{HL2}). When the subequation $F$ is convex, for any $F$-subharmonic function $u$, upper semicontinuity of density functions $\Theta^{M}(u,\cdot)$, $\Theta^{S}(u,\cdot)$ and $\Theta^{V}(u,\cdot)$ was proved (see \cite[Theorem 7.4]{HL1}), which implies that for any $c>0$ and each density function as above, the set
\begin{equation*}
E_{c}(u):=\{x~|~\Theta(u,x)\geq c\}
\end{equation*}
is closed (see \cite[Corollary 7.5]{HL1}). Furthermore, the discreteness of the set $E_{c}(u)$ was established when strong uniqueness of tangents holds for $F$ and $p>2$, where $p$ is the Riesz characteristic of $F$ (see \cite[Theorem 14.1, Theorem 14.1']{HL1}).

\subsection{Definitions and notations}
In this paper, many definitions in Harvey and Lawson's theory will be used. For these details, we refer the reader to \cite{HL1,HL2}. We shall use the following notations, for any function $u$, point $x\in\mathbf{R}^{n}$ and $r>0$,
\begin{equation*}
M(u,x,r)=\sup_{y\in B_{1}(0^{n})}u(x+ry),
\end{equation*}
\begin{equation*}
S(u,x,r)=\frac{1}{n\omega_{n}}\int_{\partial B_{1}(0^{n})}u(x+ry)dy,
\end{equation*}
\begin{equation*}
V(u,x,r)=\frac{1}{\omega_{n}}\int_{B_{1}(0^{n})}u(x+ry)dy,
\end{equation*}
where $0^{n}$ is the origin in $\mathbf{R}^{n}$ and $\omega_{n}$ is the volume of unit ball in $\mathbf{R}^{n}$.

Let $F$ be a subequation satisfying Positivity, ST-Invariance, Cone Property and Convexity (see \cite[p.2-3]{HL1}). We assume that the Riesz characteristic of $F$ is $p$ (see \cite[Definition 3.2]{HL1}). First, let us recall some definitions in \cite{HL1}.

\begin{definition}(\cite[Definition 9.1]{HL1})\label{definition of tangential p-flow}
Suppose that $u$ is a $F$-subharmonic function. Let $x$ be a point such that $B_{\rho}(x)$ is in the domain of $u$, where $\rho>0$. For any $r>0$, the tangential $p$-flow (or $p$-homothety) of $u$ at $x$ is defined as follows.
\begin{enumerate}[~~~~~~(1)]
    \item If $p>2$, $u_{x,r}(y):=r^{p-2}u(x+ry)$ in $B_{\frac{\rho}{r}}(0^{n})$;
    \item If $2>p\geq 1$, $u_{x,r}(y):=\frac{1}{r^{2-p}}\left(u(x+ry)-u(x)\right)$ in $B_{\frac{\rho}{r}}(0^{n})$;
    \item If $p=2$, $u_{x,r}(y):=u(x+ry)-M(u,x,r)$ in $B_{\frac{\rho}{r}}(0^{n})$.
\end{enumerate}
\end{definition}

\begin{definition}(\cite[Definition 12.1]{HL1})\label{three definitions}
Suppose that $u$ is a $F$-subharmonic function. Let $T_{x}(u)$ be the tangent set to $u$ at $x$ (see \cite[Definition 9.3]{HL1}), where $x$ is a interior point in the domain of $u$.
\begin{enumerate}[~~~~~~(1)]
    \item For any $u$ and $x$, if every tangent $\varphi\in T_{x}(u)$ satisfies $\varphi_{0^{n},r}=\varphi$ for any $r>0$, we say that homogeneity of tangents holds for $F$;
    \item For any $u$ and $x$, if $T_{x}(u)$ is a singleton, we say that uniqueness of tangents holds for $F$;
    \item For any $u$ and $x$, if $T_{x}(u)=\{\Theta K_{p}(|\cdot|)\}$, where $\Theta\geq0$ is a constant and $K_{p}$ is the classical $p^{th}$ Riesz kernel (see \cite[(1.1)]{HL1}), we say that strong uniqueness of tangents holds for $F$.
\end{enumerate}
\end{definition}

\begin{remark}
In Definition \ref{three definitions}, it is clear that (3) implies (2) and (2) implies (1).
\end{remark}

Next, in order to study the singular sets of $F$-subharmonic functions, we have the following definitions.

\begin{definition}\label{homogeneous definition}
A function $h:\mathbf{R}^{n}\rightarrow\mathbf{R}$ is said to be $k$-homogeneous at $x\in\mathbf{R}^{n}$ with respect to $k$-plane $V^{k}\subset\mathbf{R}^{n}$ if $h$ satisfies the following properties:
\begin{enumerate}[~~~~~~(1)]
    \item $h$ is subharmonic on $\mathbf{R}^{n}$;
    \item For any $r>0$, $h_{x,r}(y)=h(y+x)$ for every $y\in \mathbf{R}^{n}$, where $h_{x,r}$ is the tangential $p$-flow of $h$ at $x$;
    \item For any $y\in\mathbf{R}^{n}$ and $v\in V^{k}$, $h(y+v+x)=h(y+x)$.
\end{enumerate}
If $x=0^{n}$, we say $h$ is $k$-homogeneous (or $h$ is a $k$-homogeneous function) for convenience.
\end{definition}

\begin{definition}
A function $u:B_{2r}(x)\subset\mathbf{R}^{n}\rightarrow\mathbf{R}$ is said to be $(k,\epsilon,r,x)$-homogeneous, if there exists a $k$-homogeneous function $h:\mathbf{R}^{n}\rightarrow\mathbf{R}$ such that
\begin{equation*}
\|u_{x,r}-h\|_{L^{1}(B_{1}(0^{n}))}<\epsilon.
\end{equation*}
\end{definition}

\begin{definition}
Suppose that homogeneity of tangents holds for $F$. Let $u$ be a $F$-subharmonic function on $B_{2}(0^{n})$. For any $\eta>0$ and $r\in(0,1)$, we have the following definitions
\begin{enumerate}[~~~~~~(1)]
    \item The singular set $\mathcal{S}(u)$ is defined by
\begin{equation*}
\mathcal{S}(u):=\{x\in B_{2}(0^{n})~|~\text{no tangent at $x$ is $n$-homogeneous}\}.
\end{equation*}
    \item The $k^{th}$ stratification $\mathcal{S}^{k}(u)$ is defined by
\begin{equation*}
\mathcal{S}^{k}(u):=\{x\in B_{2}(0^{n})~|~\text{no tangent at $x$ is $(k+1)$-homogeneous}\}.
\end{equation*}
    \item The $k^{th}$ $\eta$-stratification $\mathcal{S}_{\eta}^{k}(u)$ is defined by
\begin{equation*}
\mathcal{S}_{\eta}^{k}(u):=\{x\in B_{2}(0^{n})~|~\text{$u$ is not $(k+1,\eta,s,x)$-homogeneous for any $s\in(0,1)$}\}.
\end{equation*}
    \item The $k^{th}$ $(\eta,r)$-stratification $\mathcal{S}_{\eta,r}^{k}(u)$ is defined by
\begin{equation*}
\mathcal{S}_{\eta,r}^{k}(u):=\{x\in B_{2}(0^{n})~|~\text{$u$ is not $(k+1,\eta,s,x)$-homogeneous for any $s\in[r,1)$}\}.
\end{equation*}
\end{enumerate}
\end{definition}

\begin{remark}
When homogeneity of tangents holds for $F$, we have the following relationships (see Proposition \ref{quantitative stratification proposition})
\begin{equation*}
\mathcal{S}^{0}(u)\subset\mathcal{S}^{1}(u)\subset\cdots\subset\mathcal{S}^{n-1}(u)=\mathcal{S}(u)
\end{equation*}
and
\begin{equation}\label{quantitative stratification}
\mathcal{S}^{k}(u)=\bigcup_{\eta}\mathcal{S}_{\eta}^{k}(u)=\bigcup_{\eta}\bigcap_{r}\mathcal{S}_{\eta,r}^{k}(u).
\end{equation}
\end{remark}

\begin{remark}
When strong uniqueness of tangents holds for $F$, three density functions $\Theta^{M}(u,\cdot)$, $\Theta^{S}(u,\cdot)$ and $\Theta^{V}(u,\cdot)$ are equivalent (see \cite[Proposition 7.1, (12.3)]{HL1}). And for each density function as above, we have
\begin{equation*}
\mathcal{S}(u)=\mathcal{S}^{0}(u)=\bigcup_{c>0}E_{c}(u),
\end{equation*}
where $E_{c}(u)=\{x\in B_{2}(0^{n})~|~\Theta(u,x)\geq c\}$.
\end{remark}

\subsection{Main results}
In this paper, we assume that $F$ is a subequation satisfies Positivity, ST-Invariance, Cone Property and Convexity (see \cite[p.2-3]{HL1}). Let $p$ be the Riesz characteristic of $F$ (see \cite[Definition 3.2]{HL1}). When $1\leq p<2$, the $F$-subharmonic function is H\"{o}lder continuous (see \cite[Theorem 15.1]{HL1}). Hence, we focus on the case $p\geq2$ in this paper. Our main results are the following theorems.

\begin{theorem}\label{first main result}
Suppose that $F$ is a subequation such that homogeneity of tangents holds for $F$. Let $u$ be a $F$-subharmonic function defined on $B_{2}(0^{n})$ with $\|u\|_{L^{1}(B_{2}(0^{n}))}\leq\Lambda$. For any $\eta>0$, we have
\begin{enumerate}[~~~~~~(1)]
    \item $\text{Vol}(B_{r}(S_{\eta}^{n-p}(u)\cap B_{1}(0^{n})))\leq C(p,n)\eta^{-1}\left(\int_{B_{1+r}(0^{n})}\Delta u\right)r^{p}$ for any $r\in(0,\frac{1}{5})$;
    \item $\mathcal{S}(u)=\mathcal{S}^{n-p}(u)$;
    \item $dim_{H}(\mathcal{S}^{k}(u))\leq k$ for any $k=1,2,\cdots,n$, where $dim_{H}\mathcal{S}^{k}(u)$ is the Hausdorff dimension of $\mathcal{S}^{k}(u)$.
\end{enumerate}
\end{theorem}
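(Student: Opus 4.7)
The plan is to establish the three parts in the order (2), (1), (3), since part (2) produces the rigidity that drives the density lower bound needed for the Minkowski estimate in part (1), while part (3) comes from a separate Federer--Almgren dimension-reduction scheme.

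For part (2), the inclusion $\mathcal{S}^{n-p}(u)\subseteq\mathcal{S}(u)$ is tautological, so the content is to show that any $(n-p+1)$-homogeneous tangent $\varphi$ is $n$-homogeneous. Let $V\subseteq\mathbf{R}^n$ be the $(n-p+1)$-dimensional translation subspace of $\varphi$; then $\varphi$ descends to $\psi$ on $V^\perp\cong\mathbf{R}^{p-1}$, which inherits subharmonicity from $\varphi$ and satisfies the $p$-flow invariance $\psi_{0,r}=\psi$ for every $r>0$. For $p=2$, evaluating $\psi(ry)=\psi(y)+M(\psi,0,r)$ at $y=0$ gives $M(\psi,0,r)\equiv 0$, so $\psi$ is radially invariant on $\mathbf{R}$; convexity combined with upper semicontinuity forces $\psi$ to be constant. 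For $p>2$, the scaling relation $\psi(ry)=r^{2-p}\psi(y)$ yields $\psi(y)=|y|^{2-p}g(y/|y|)$, homogeneous of negative degree; the computation
\begin{equation*}
\Delta\bigl(|y|^{2-p}g(\sigma)\bigr)=|y|^{-p}\bigl[(p-2)g+\Delta_{S^{p-2}}g\bigr]
\end{equation*}
together with upper semicontinuity at the origin and a spherical-harmonic decomposition excludes every eigenmode of $g$, so $\psi$ is constant. In either case $\varphi$ is constant on $\mathbf{R}^n$, and the $p$-flow identity $\varphi_{0,r}=\varphi$ then forces this constant to be $0$, so $\varphi$ is $n$-homogeneous.

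For part (1), the key auxiliary statement is: there exists $c=c(\eta,p,n)>0$ such that $x\in\mathcal{S}_\eta^{n-p}(u)\cap B_1(0^n)$ implies $\Delta u(B_r(x))\geq c\,r^{n-p}$ for every $r\in(0,1)$. This follows by contradiction and $L^1_{\text{loc}}$-compactness of $F$-subharmonic functions applied to the rescaling $u_{x,r}$: if the bound failed, a subsequential tangent $\varphi$ would have vanishing Riesz mass, hence be harmonic; homogeneity of tangents combined with the analysis of part (2) then forces $\varphi\equiv 0$, which is $n$-homogeneous and in particular $(n-p+1)$-homogeneous, contradicting $x\in\mathcal{S}_\eta^{n-p}(u)$ via the $L^1$ closeness of $u_{x,r}$ to $\varphi$. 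Given this density bound, a Vitali selection produces balls $B_{r/5}(x_i)$ pairwise disjoint with $\{B_r(x_i)\}$ covering $\mathcal{S}_\eta^{n-p}(u)\cap B_1(0^n)$, and
\begin{equation*}
N\cdot c(\eta)(r/5)^{n-p}\leq\sum_i\Delta u(B_{r/5}(x_i))\leq\int_{B_{1+r}(0^n)}\Delta u,
\end{equation*}
whence $\text{Vol}\bigl(B_r(\mathcal{S}_\eta^{n-p}(u)\cap B_1(0^n))\bigr)\leq N\omega_n(2r)^n\leq C(n,p)\eta^{-1}\bigl(\int_{B_{1+r}(0^n)}\Delta u\bigr)r^p$.

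For part (3), a Federer--Almgren dimension-reduction argument applies directly in the $F$-subharmonic setting: at any $x\in\mathcal{S}^k(u)\setminus\mathcal{S}^{k-1}(u)$ a tangent $\varphi$ is exactly $k$-homogeneous, and iterating blow-ups on the transverse $(n-k)$-dimensional factor, using that tangents of tangents are tangents, reduces the symmetry stratification inductively; a standard covering count at each dyadic scale then yields $\dim_H\mathcal{S}^k(u)\leq k$. The principal obstacle I expect is in part (2): the spherical-harmonic analysis on $\mathbf{R}^{p-1}$ is delicate because the constant mode is ruled out by upper semicontinuity at the origin, the first harmonic sits exactly at equality in the subharmonicity eigenvalue inequality, and higher modes can combine distributionally to satisfy $\Delta\psi\geq 0$, so the argument must use the full $F$-subharmonic condition (subharmonicity of the restriction to every $p$-plane through the origin) together with upper semicontinuity at the origin, rather than weak subharmonicity in the classical sense.
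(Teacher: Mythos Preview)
Your architecture differs from the paper's in two places. For (2), the paper argues indirectly: it first proves the volume bound for the density super-level set $E_\eta(u)=\{\Theta^S(u,\cdot)\geq\eta\}$ (this is the core of (1)), and then derives (2) by a dimension contradiction---if some tangent $\varphi$ were $(n-p+1)$-homogeneous but not $n$-homogeneous, its invariant $(n-p+1)$-plane would lie in $\mathcal{S}(\varphi)$, giving $\dim_H\mathcal{S}(\varphi)\geq n-p+1$, while the Minkowski estimate applied to $\varphi$ gives $\dim_H\mathcal{S}(\varphi)\leq n-p$. Your direct classification of $(n-p+1)$-homogeneous tangents via the descent $\psi$ to $\mathbf{R}^{p-1}$ is a genuinely different route and is viable, but it is simpler than you fear: for $p>2$, upper semicontinuity of $\psi$ at the origin already forces $g\leq 0$ (any ray with $g>0$ sends $\psi\to+\infty$), and integrating the pointwise inequality $(p-2)g+\Delta_{S^{p-2}}g\geq 0$ over the sphere gives $\int g\geq 0$, hence $g\equiv 0$. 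No spherical-harmonic decomposition is needed. Your proposed fallback---``subharmonicity of the restriction to every $p$-plane through the origin''---is a property of $\mathbb{G}$-plurisubharmonic functions, not of general $F$-subharmonic functions, so invoking it in this theorem would be an error.

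For (1), the paper again proceeds directly rather than by compactness. It observes that $0$ is itself an $(n-p+1)$-homogeneous function, so $x\in\mathcal{S}_\eta^{n-p}(u)$ forces $\|u_{x,r}\|_{L^1(B_1)}\geq\eta$ for every $r$; passing to a tangent $U$ and using the bound $-\int_{B_1}U\leq\tilde C(p,n)\,\Theta^S(u,x)$ from \cite[Theorem~10.1]{HL1} gives $\Theta^S(u,x)\geq\tilde C^{-1}\eta$, i.e.\ $\mathcal{S}_\eta^{n-p}(u)\subset E_{\tilde C^{-1}\eta}(u)$. The mass lower bound $\Delta u(B_r(x))\geq C\eta\,r^{n-p}$ then follows immediately from the monotonicity of $S_-'(u,x,t)/K_p'(t)$, with constants depending only on $p,n$. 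Your contradiction/compactness argument produces only some $c(\eta,\Lambda,p,n)>0$, not a constant linear in $\eta$; consequently the final estimate you write, with the explicit factor $C(p,n)\eta^{-1}$, does not follow from your lemma as stated---you would obtain only a bound of the form $C(\eta,\Lambda,p,n)\,r^p$, which is strictly weaker than what the theorem asserts. Part (3) is the standard iterated blow-up in both approaches, and your sketch matches the paper's.
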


\begin{theorem}\label{Rectifiability theorem}
Suppose that $F$ is a subequation such that uniqueness of tangents holds for $F$. Let $u$ be a $F$-subharmonic function defined on $B_{2}(0^{n})$. Then $\mathcal{S}^{k}(u)$ is $k$-rectifiable for any $k=1,2,\cdots,n$.
\end{theorem}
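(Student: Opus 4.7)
The plan is to first reduce to the quantitative strata $\mathcal{S}_\eta^k(u)$ via the identity \eqref{quantitative stratification}, which gives $\mathcal{S}^k(u) = \bigcup_{j=1}^{\infty} \mathcal{S}_{1/j}^k(u)$. Since a countable union of $k$-rectifiable sets is $k$-rectifiable, it suffices to prove the stronger statement promised in the abstract, namely that $\mathcal{S}_\eta^k(u)$ is $k$-rectifiable for every fixed $\eta > 0$. This is the set I would actually work with throughout the argument.

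The main analytic input I would prove first is a quantitative cone-splitting lemma: there exists $\delta = \delta(n,\eta) > 0$ such that if $u$ is $(k,\delta,s,x)$-homogeneous with respect to a $k$-plane $V^k$ passing through $x$ and is also $(0,\delta,s,y)$-homogeneous at some point $y \in B_s(x)$ with $\mathrm{dist}(y, x + V^k) \geq \eta s$, then $u$ is $(k+1,\eta,s,x)$-homogeneous with respect to $V^k \oplus \mathrm{span}(y-x)$. The proof is a standard contradiction-and-compactness argument: negate the conclusion, rescale so $s=1$ and $x = 0^n$, and extract an $L^1_{\mathrm{loc}}$-limit $h$ of the counterexamples, using $L^1_{\mathrm{loc}}$ compactness of $F$-subharmonic functions together with Positivity, ST-Invariance, Cone Property and Convexity, which guarantee that the limit $h$ is itself $F$-subharmonic. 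The hypotheses pass to the limit, forcing $h$ to be $k$-homogeneous through $V^k$ and also fixed under the tangential $p$-flow at $y$. Uniqueness of tangents applied to $h$ at $y$ then identifies the tangent with the translate of $h$ and upgrades the symmetry to $(k+1)$-symmetry along $V^k \oplus \mathrm{span}(y)$, contradicting the $\eta$-lower bound.

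With the cone-splitting in hand, I would apply it iteratively at each ball $B_s(x)$ with $x \in \mathcal{S}_\eta^k(u)$ to produce a well-defined best-approximating $k$-plane $V_{x,s}$ in the sense that no symmetry plane of dimension $k+1$ can coexist with it. Uniqueness of tangents then ensures $V_{x,s}$ stabilises as $s \to 0$ to the spine $V_x$ of the unique tangent $\varphi_x$, and moreover that $\mathcal{S}_\eta^k(u) \cap B_s(x)$ is $\eta s$-close to $x + V_{x,s}$ for all sufficiently small $s$. Feeding these approximating planes into a Reifenberg-type criterion --- either a Federer-type criterion based on $\mathcal{H}^k$-a.e.\ existence of approximate tangent planes or, more robustly, the rectifiable Reifenberg theorem of Naber--Valtorta --- yields a covering of $\mathcal{S}_\eta^k(u)$ by countably many Lipschitz $k$-graphs, which is the definition of $k$-rectifiability.

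The main obstacle is Step 2, the cone-splitting lemma. Under merely uniqueness of tangents (as opposed to strong uniqueness or a monotonicity formula) one has no quantitative rate $\|u_{x,r} - \varphi_x\|_{L^1} \leq \omega(r)$ with an explicit modulus $\omega$, so one cannot produce explicit $\delta$--$\eta$ dependencies and must rely on pure compactness. This is enough for the qualitative rectifiability in Theorem \ref{Rectifiability theorem} but is exactly why Minkowski-type volume estimates --- the content of Theorem \ref{first main result} and of the results under strong uniqueness --- require additional hypotheses. A second subtle point in the compactness step is checking that the three distinct $p$-flow conventions in Definition \ref{definition of tangential p-flow} are compatible with the normalizations needed to rule out blow-downs that degenerate to constants; this is handled by a case split on whether $p > 2$, $p = 2$, or $p < 2$, although under our standing assumption $p \geq 2$ only the first two cases arise.
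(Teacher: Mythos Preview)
Your overall architecture---cone-splitting plus convergence to the unique tangent, then a weak-approximation rectifiability criterion---matches the paper's approach, but there is a real gap in the step where you claim that $\mathcal{S}_\eta^k(u)\cap B_s(x)$ is $\eta s$-close to $x+V_{x,s}$ for all sufficiently small $s$. To invoke cone-splitting and force a nearby point $y$ into the tube around $V_{x,s}$, you need $u$ to be $(0,\delta,s,y)$-homogeneous \emph{at the same scale $s$}. Membership in $\mathcal{S}_\eta^k(u)$ is a negative condition (no $(k+1,\eta)$-symmetry at any scale); it tells you nothing about $(0,\delta)$-symmetry of $u$ at $y$. Uniqueness of tangents does give $(0,\delta,s,y)$-homogeneity for $s$ below some threshold, but that threshold depends on $y$, not uniformly on the ball $B_s(x)$, so your ``sufficiently small $s$ depending on $x$'' is not enough.

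The paper handles this by inserting one more countable decomposition: it defines $F_{\delta,\eta}(u)=\{x: u\text{ is }(0,\delta,r,x)\text{-homogeneous for every }r<\eta\}$, proves the tube containment for $F_{\delta,r}(u)\cap B_r(x)$ (Lemmas 5.1--5.2), and then uses uniqueness of tangents only to write $\mathcal{S}_\epsilon^k(u)=\bigcup_\eta\bigl(F_{\delta,\eta}(u)\cap\mathcal{S}_\epsilon^k(u)\bigr)$. On each piece the scale is uniform, the cone-splitting contradiction goes through, and Simon's weak-approximation lemma (\cite[p.~61, Lemma 1]{Si}) yields rectifiability of $\bigl(F_{\delta,\eta}(u)\cap\mathcal{S}_\epsilon^k(u)\bigr)\setminus\mathcal{S}^{k-1}(u)$. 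The residual $\mathcal{S}^{k-1}(u)$ is disposed of by the Hausdorff-dimension bound. Two smaller points: the cone-splitting lemma itself (Lemma \ref{cone-splitting lemma}) does not require uniqueness of tangents---it is a pure compactness statement valid for all $F$---so your placement of that hypothesis inside the contradiction argument is misplaced; and the Naber--Valtorta rectifiable Reifenberg theorem needs summable $\beta$-number control that you do not have here, so the soft Simon criterion is the right endgame, as you also suggest.
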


\begin{theorem}\label{estimate of singular set}
Suppose that $F$ is a subequation such that strong uniqueness of tangents holds for $F$ and $p>2$. Let $u$ be a $F$-subharmonic function defined on $B_{2}(0^{n})$ with $\|u\|_{L^{1}(B_{2}(0^{n}))}\leq\Lambda$. For any $c>0$, there exists a constant $C(c,\Lambda,F)$ such that
\begin{equation}\label{estimate of singular set equation3}
\#\left(E_{c}(u)\cap B_{1}(0^{n})\right)\leq C(c,\Lambda,F).
\end{equation}
\end{theorem}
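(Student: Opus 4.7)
My plan is to upgrade the Harvey--Lawson discreteness of $E_{c}(u)$ from \cite[Theorem 14.1]{HL1} to a uniform cardinality bound by combining it with the monotonicity of the $F$-energy, a compactness argument that produces a uniform scale, and a disjoint-ball covering estimate. The discreteness theorem alone says that $E_{c}(u)\cap B_{1}(0^{n})$ is finite for each fixed $u$, but its proof does not provide a bound depending only on $c,\Lambda,F$; the new ingredient is the $F$-energy $\mathcal{E}(u,x,r)$ introduced in this paper, whose monotonicity and globally $L^{1}$-controlled total mass make the counting quantitative.

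The first step is to set up the $F$-energy and record its basic properties under the monotonicity condition: $\mathcal{E}(u,x,\cdot)$ is monotone in the scale, its total contribution over $B_{3/2}(0^{n})$ is bounded by a constant times $\|u\|_{L^{1}(B_{2}(0^{n}))}\le\Lambda$, and, by strong uniqueness of tangents, $\mathcal{E}(u,x,r)$ detects the $L^{1}$-defect between $u_{x,r}$ and the sharp profile $\Theta(u,x)K_{p}(|\cdot|)$. Since $p>2$, the kernel $K_{p}$ is genuinely singular at the origin, so this defect is nontrivial whenever $\Theta(u,x)\ge c$.

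The second step is a compactness / uniform-scale lemma: there exist constants $\epsilon_{0}(c,F)>0$ and $r_{0}=r_{0}(c,\Lambda,F)>0$ such that for every $F$-subharmonic $u$ with $\|u\|_{L^{1}(B_{2}(0^{n}))}\le\Lambda$, every $x\in E_{c}(u)\cap B_{1}(0^{n})$, and every $r\in(0,r_{0}]$, one has $\mathcal{E}(u,x,r)\ge\epsilon_{0}$. This is proved by contradiction: if no such uniform scale existed, a sequence of counterexamples would converge, using the $L^{1}$-compactness of $F$-subharmonic families and the upper semicontinuity of $\Theta$, to a limit $u_{\infty}$ with a point $x_{\infty}\in E_{c}(u_{\infty})$ at which strong uniqueness of tangents is violated, contradicting the hypothesis. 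The same compactness forces any two points of $E_{c}(u)\cap B_{1}(0^{n})$ to be separated by at least $r_{0}$, since otherwise two nearby poles could not both be accommodated inside a single tangent profile $\Theta K_{p}$ centered at one of them.

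The third step is the refined covering. The separation implies that the family $\{B_{r_{0}/2}(x):x\in E_{c}(u)\cap B_{1}(0^{n})\}$ consists of pairwise disjoint balls, and on each such ball the $F$-energy is at least $\epsilon_{0}$; summing and using the global bound yields
\begin{equation*}
\#\bigl(E_{c}(u)\cap B_{1}(0^{n})\bigr)\cdot\epsilon_{0}\ \le\ \sum_{x\in E_{c}(u)\cap B_{1}(0^{n})}\mathcal{E}(u,x,r_{0}/2)\ \le\ C(F)\Lambda,
\end{equation*}
giving \eqref{estimate of singular set equation3} with $C(c,\Lambda,F)=C(F)\Lambda/\epsilon_{0}$. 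The main obstacle is the compactness / uniform-scale step: one has to check that weak limits preserve both $F$-subharmonicity and the lower bound on the density, and that strong uniqueness of tangents at the limit translates into a definite quantitative estimate at every sufficiently small scale for the tail of the sequence. Once that propagation is in place, the monotonicity of $\mathcal{E}$ and the Vitali-type disjoint selection close the argument in a standard fashion.
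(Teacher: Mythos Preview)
Your proposal has a genuine gap at the uniform separation step, and the final summation is also not correct as written.

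\textbf{The separation claim fails.} You assert that compactness forces any two points of $E_{c}(u)\cap B_{1}(0^{n})$ to lie at distance at least $r_{0}(c,\Lambda,F)$. The sketched argument (``two nearby poles could not both be accommodated inside a single tangent profile $\Theta K_{p}$'') does not go through. If $u_{i}$ has two points $x_{i},y_{i}\in E_{c}(u_{i})$ with $d_{i}=|x_{i}-y_{i}|\to 0$, passing to the limit without rescaling merges the two points into one, giving no contradiction. If instead you rescale and consider $v_{i}:=(u_{i})_{x_{i},d_{i}}$, then a subsequential limit $v$ is an $F$-subharmonic function on $\mathbf{R}^{n}$ with two points of density $\ge c$ at distance $1$; but $v$ is \emph{not} a tangent to any fixed function, so strong uniqueness of tangents gives you nothing about $v$. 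There is no reason $v$ should equal $\Theta K_{p}$. Concretely, for the Laplacian subequation ($p=n$, where strong uniqueness does hold) the functions $u_{i}(x)=cK_{n}(|x|)+cK_{n}(|x-e_{1}/i|)$ are $F$-subharmonic with uniformly bounded $L^{1}$ norm and with two points in $E_{c}$ at distance $1/i\to 0$; uniform separation is simply false here.

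\textbf{The summation step is ill-posed.} The $F$-energy $\theta_{F}(u,x,r)=S(u,x,r)/K_{p}(r)+M(u,x,r)/K_{p}(r)$ is a pointwise quantity, not an integral over $B_{r}(x)$; it is bounded by $C\Lambda$ for each $x$ (this is Lemma~\ref{monotonicity condition lemma} and the argument of Lemma~\ref{energy bound lemma}), but there is no reason for $\sum_{x}\theta_{F}(u,x,r_{0}/2)\le C\Lambda$ over many disjoint balls. If you instead meant the additive quantity $\int_{B_{r}(x)}\Delta u$, the lower bound at a point of $E_{c}$ is $C(p,n)\,c\,r^{n-p}$, which still forces you back to a uniform $r_{0}$ to get a finite count.

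\textbf{How the paper actually proceeds.} The paper does \emph{not} prove uniform separation. Instead it runs a refined dyadic covering: starting from $B_{1}(0^{n})$, at each step one covers the remaining points by balls of half the radius, records whether the points spread over several balls ($T_{i}=1$) or concentrate in one ($T_{i}=0$), and always follows the ball with the most points down to a terminal center $x_{i_{0}}$. The key observation is that whenever $T_{i}=1$ there is a point of $E_{c}(u)$ in an annulus around $x_{i_{0}}$ at scale $2^{-i}$; by Lemma~\ref{no singular point lemma} this forces $u_{x_{i_{0}},2^{-i+3}}$ to fail $(0,\epsilon,1,0^{n})$-homogeneity, and by Lemma~\ref{rigidity for F-subharmonic} the $F$-energy $\theta_{F}(u,x_{i_{0}},\cdot)$ must drop by at least $\delta_{0}$ across that scale. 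These drops occur at \emph{different dyadic scales of the same point} $x_{i_{0}}$, so they add up inside the single monotone quantity $\theta_{F}(u,x_{i_{0}},\tfrac{1}{2})-\theta_{F}(u,x_{i_{0}},0)\le L(\Lambda,p,n)$. Hence the number of bad scales $|I|$ is bounded, and $\#(E_{c}(u)\cap B_{1})\le (2^{2n})^{|I|}$. The reduction to the monotonicity condition is handled by subtracting a constant (Lemma~\ref{monotonicity condition lemma}), which leaves $E_{c}$ unchanged. The essential difference from your outline is that the energy budget is spent \emph{across scales at one point}, not \emph{across points at one scale}.
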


In the proof of Theorem \ref{estimate of singular set}, we introduce the monotonicity condition and the notion of $F$-energy. And we prove every $F$-subharmonic function satisfies monotonicity condition after subtracting a constant. For $F$-subharmonic function satisfies monotonicity condition, we prove (\ref{estimate of singular set equation3}) by using refined covering arguments, which is introduced in \cite{NVV}. Since the set $E_{c}(u)$ is invariant after subtracting a constant, we obtain Theorem \ref{estimate of singular set}.

For geometrically defined subequations $F(\mathbb{G})$ (i.e., $\mathbb{G}$-plurisubharmonic case), we have the following Minkowski estimate of quantitative stratification.
\begin{theorem}\label{main theorem for G-plurisubharmonic}
Let $u$ be a $\mathbb{G}$-plurisubharmonic function on $B_{2}(0^{n})$ with $\|u\|_{L^{1}(B_{2}(0^{n}))}\leq\Lambda$. For any $\eta>0$, there exists constant $C(\eta,\Lambda,\mathbb{G})$ such that for any $r\in(0,1)$, we have
\begin{equation}\label{main theorem for G-plurisubharmonic equation}
\text{Vol}(B_{r}(\mathcal{S}_{\eta,r}^{k}(u))\cap B_{1}(0^{n}))\leq C(\eta,\Lambda,\mathbb{G})r^{n-k-\eta}.
\end{equation}
\end{theorem}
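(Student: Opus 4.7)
The plan is to follow the quantitative differentiation and covering framework introduced by Cheeger--Naber, adapted to the $F(\mathbb{G})$ setting via the $\mathbb{G}$-energy. The three essential ingredients to produce are a monotone energy, a quantitative rigidity (almost-constant energy implies almost-homogeneity), and a cone-splitting lemma; once these are in place, the Minkowski bound follows from a standard inductive covering.

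First, I would establish a monotonicity formula for the $\mathbb{G}$-energy $E(u,x,r)$. Using Harvey--Lawson's monotonicity of the density functions $\Theta^{V}(u,x,r)$ together with the Restriction Theorem (so that $u|_{W}$ is subharmonic for each $W\in\mathbb{G}$), the quantity $E(u,x,r)$ should be non-decreasing in $r$ and satisfy a uniform bound $E(u,x,1)\leq C(\mathbb{G})\Lambda$ for $x\in B_{1}(0^n)$. Dyadic quantitative differentiation then gives: for any $\delta>0$, at every $x\in B_{1}(0^n)$ the set of scales $s=2^{-j}\in[r,1)$ along which $E(u,x,2s)-E(u,x,s)\geq\delta$ has cardinality at most $N(\delta,\Lambda,\mathbb{G})$.

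Second, I would prove a quantitative rigidity lemma: for every $\eta>0$ there is $\delta(\eta,\Lambda,\mathbb{G})>0$ such that if $E(u,x,2s)-E(u,x,s)<\delta$, then $u_{x,s}$ is $(0,\eta,s,x)$-homogeneous. This is a standard compactness--contradiction argument: a sequence of counterexamples $u^{i}$ with vanishing pinching has, after passing to an $L^{1}_{\mathrm{loc}}$-subsequence, a limit whose energy is constant in $r$; by the monotonicity equality case this limit coincides with its own tangent at $x$, which is $0$-homogeneous by homogeneity ($p=2$) or uniqueness ($p>2$) of tangents for $F(\mathbb{G})$. Combined with the cone-splitting principle --- if $u_{x,s}$ is $(0,\eta,s,y_{i})$-homogeneous at $k+1$ points $y_{0},\ldots,y_{k}$ that are $\tau$-independent in an affine sense, then $u_{x,s}$ is $(k+1,\eta',s,x)$-homogeneous with $\eta'=\eta'(\eta,\tau,\mathbb{G})$ --- one concludes: if $x\in\mathcal{S}^{k}_{\eta,r}(u)$, then at every scale $s\in[r,1)$ the set of nearby points that are $0$-symmetric at scale $s$ is contained in the $\tau s$-tube around some $k$-plane through $x$.

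Third, I would run the Cheeger--Naber covering: partition $B_{1}(0^n)\cap\mathcal{S}^{k}_{\eta,r}(u)$ by assigning to each point the sequence of ``bad'' scales in $[r,1)$ on which the energy drop exceeds $\delta(\eta)$. By step one, the number of distinct bad-scale sequences is at most $\binom{\log_{2}(1/r)}{N}$, which is subpolynomial in $r^{-1}$, hence absorbed into $r^{-\eta/2}$. Within each class, the previous paragraph forces the centers at every non-bad scale to lie in a $\tau s$-neighborhood of a $k$-plane, so an inductive Vitali covering yields at most $C(\eta,\Lambda,\mathbb{G})r^{-k-\eta/2}$ balls of radius $r$ covering the set, which gives the claimed volume bound $C r^{n-k-\eta}$.

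The main obstacle is the cone-splitting lemma in this nonlinear, potentially non-smooth subequation setting. Cheeger--Naber's original cone-splitting relies on the dilation equivariance and additive structure of harmonic functions; here one must work with viscosity $F(\mathbb{G})$-subharmonicity and a scaling that depends on whether $p=2$ or $p>2$. I would prove it by another compactness argument: normalize $u_{x,s}$ by the tangential $p$-flow, take an $L^{1}_{\mathrm{loc}}$-limit, and use that the limit is $0$-homogeneous at each limit point $y_{i}$ together with the uniqueness/homogeneity of tangents to deduce invariance under the full affine span. The fact that $F(\mathbb{G})$-subharmonicity is preserved under $L^{1}_{\mathrm{loc}}$-limits (via upper semicontinuous regularization) and under the tangential $p$-flow is what makes the compactness argument go through.
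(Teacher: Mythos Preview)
Your outline follows the same Cheeger--Naber scheme the paper uses (monotone $\mathbb{G}$-energy, quantitative rigidity, cone-splitting, decomposition/covering), so the strategy is correct. However, the quantitative rigidity step as you state it would fail, and the fix is precisely the one non-obvious point in the paper's argument.

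You assert that $E(u,x,2s)-E(u,x,s)<\delta$ forces $(0,\eta,s,x)$-homogeneity, to be proved by compactness. But if the pinching is only over $[s,2s]$, the limit function obtained from a contradicting sequence has constant $\mathbb{G}$-energy only on the compact interval $[1,2]$. Unwinding the definition, this merely says that $u|_{W}$ is harmonic on the annulus $A_{1,2}\cap W$ for a.e.\ $W\in\mathbb{G}$; it does \emph{not} force $u|_{W}$ to be radial on $B_{1}\cap W$, so the limit need not be $0$-homogeneous and the contradiction does not close. (Your phrase ``the limit coincides with its own tangent'' hides this: equality with a tangent is exactly what must be proved, and constancy on a bounded interval does not give it.) The paper's remedy is to replace the dyadic factor $2$ by a small parameter $\gamma=\gamma(\eta)$: one works with $\gamma$-adic scales and asks for pinching over $[\gamma^{i},\gamma^{i-2}]$, which after rescaling is $[\gamma,\gamma^{-1}]$. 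Choosing $\gamma<\delta_{0}$ and sending the contradicting sequence to the limit now yields constant $\mathbb{G}$-energy on all of $(0,\infty)$, hence each $u|_{W}=\Theta(W)K_{p}+C(W)$ globally. A second ingredient you omit is then needed to kill the additive harmonic piece $C(W)$: a uniform growth bound $\|u_{x,s}\|_{L^{1}(B_{r})}\le N r^{n-p+2}$ (for $p>2$) or $\le N r^{n}(|\log r|+1)$ together with the normalization $M(u_{x,s},0,1)=0$ (for $p=2$), which the paper records separately (their Lemma~7.10 and Theorem~7.8). Only with both the unbounded-interval pinching and this growth control does the compactness argument yield genuine $0$-homogeneity of the limit. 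Once you make these two adjustments, your covering argument matches the paper's and goes through.
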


\begin{remark}\label{smooth remark}
It suffices to prove Theorem \ref{main theorem for G-plurisubharmonic} when $\mathbb{G}$ is a smooth submanifold of $G(p,\mathbf{R}^{n})$. For general $\mathbb{G}$, since $\mathbb{G}$ is invariant under a subgroup $G\subset O(n)$ acting transitively on the sphere $S^{n-1}\subset\mathbf{R}^{n}$, we fix $W\in\mathbb{G}$ and consider $\mathbb{G}_{0}=G\cdot W$. Then $\mathbb{G}_{0}$ is a smooth submanifold of $G(p,\mathbf{R}^{n})$ and $F(\mathbb{G})\subset F(\mathbb{G}_{0})$. It follows that any $\mathbb{G}$-plurisubharmonic function is $\mathbb{G}_{0}$-plurisubharmonic function. Then Theorem \ref{main theorem for G-plurisubharmonic} for smooth $\mathbb{G}_{0}$ implies Theorem \ref{main theorem for G-plurisubharmonic} for general $\mathbb{G}$ (see \cite[p.9]{HL2}). Therefore, without loss of generality, we assume that $\mathbb{G}$ is a smooth submanifold of $G(p,\mathbf{R}^{n})$ in Section 7.
\end{remark}

In the proof of Theorem \ref{main theorem for G-plurisubharmonic}, we introduce the notion of $\mathbb{G}$-energy, which is a monotone quantity. The key point is to establish the quantitative rigidity theorem (Theorem \ref{quantitative rigidity theorem, p>2} and Theorem \ref{quantitative rigidity theorem, p=2}). Roughly speaking, we prove it by making use of the information of tangent at infinity, together with a contradiction argument. Next, combining quantitative rigidity theorem (Theorem \ref{quantitative rigidity theorem, p>2} and Theorem \ref{quantitative rigidity theorem, p=2}) and cone-splitting lemma (Lemma \ref{cone-splitting lemma}), we obtain decomposition lemma (Lemma \ref{number of covering lemma}), which implies Theorem \ref{main theorem for G-plurisubharmonic}.

In general outline, we will follow a scheme introduced in \cite{CN13a}, where quantitative differentation argument was established. By this method, Cheeger and Naber proved some new estimates on non-collapsed Riemannian manifolds with Ricci curvature bounded from below, especially Einstein manifolds. In fact, this method has already been applied to many areas. Analogous results were obtained in the study of mean curvature flows, elliptic equations, harmonic maps and so on (see \cite{CHN13a,CHN13b,CN13a,CN13b,CNV15}).

Recently, Naber and Valtorta \cite{NV14} introduced new techniques for estimating the critical and singular set of elliptic PDEs. In \cite{NV15a,NV15b,NV16b}, they also got some new results on stationary and minimizing harmonic maps. It was proved that the $k^{th}$ stratification of singular set is $k$-rectifiable and obtained more stronger estimates of the quantitative stratification. And these techniques have also been applied to the study of stationary Yang Mills (see \cite{NV16a}) and $L^{2}$ curvature bounds on non-collapsed Riemannian manifolds with bounded Ricci curvature (see \cite{JN}).

\bigskip

{\bf Acknowledgments. }The author would like to thank his advisor Professor Gang Tian for encouragement and support. The author would also like to thank Professor Aaron Naber for suggesting this problem and many helpful conversations. Partial work was done while the author was visiting the Department of Mathematics at Northwestern University, supported by the China Scholarship Council (File No. 201506010010). The author would like to thank the China Scholarship Council for supporting this visiting. The author would also like to thank the Department of Mathematics at Northwestern University for its hospitality and for providing a good academic environment.

\section{Cone-splitting lemma}
In this section, we prove cone-splitting lemma (Lemma \ref{cone-splitting lemma}) for $F$-subharmonic functions. And we will use it throughout this paper.
\begin{theorem}[Cone-splitting principle]\label{cone-splitting principle}
Let $h$ be a function which is $k$-homogeneous at $x_{1}$ with respect to $k$-plane $V^{k}$. If there exists a point $x_{2}\not\in x_{1}+V^{k}$ such that $h$ is $0$-homogeneous at $x_{2}$, then $h$ is $(k+1)$-homogeneous at $x_{1}$ with respect to $(k+1)$-plane $V^{k+1}=span\{x_{2}-x_{1},V^{k}\}$.
\end{theorem}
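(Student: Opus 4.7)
The strategy is to verify the three conditions of Definition~\ref{homogeneous definition} for $h$ to be $(k+1)$-homogeneous at $x_{1}$ with respect to $V^{k+1}:=\mathrm{span}\{x_{2}-x_{1},V^{k}\}$. Conditions (1) (subharmonicity on $\mathbf{R}^{n}$) and (2) ($p$-flow invariance at $x_{1}$) do not involve the plane, so they are inherited directly from the assumption that $h$ is $k$-homogeneous at $x_{1}$. For condition (3), write any $v\in V^{k+1}$ as $v=te+w$ with $t\in\mathbf{R}$, $w\in V^{k}$, and $e:=x_{2}-x_{1}$; the $V^{k}$-invariance of $h$ at $x_{1}$, applied to the point $y+te+x_{1}$, reduces the statement to proving
\begin{equation*}
h(z+te)=h(z)\qquad\text{for all }z\in\mathbf{R}^{n}\text{ and }t\in\mathbf{R}.
\end{equation*}

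To unwind the $0$-homogeneity of $h$ at $x_{i}$ from Definition~\ref{definition of tangential p-flow}, set $\sigma_{r}^{(i)}(z):=x_{i}+r(z-x_{i})$ for $r>0$. A substitution converts the identity $h_{x_{i},r}(y)=h(y+x_{i})$ into
\begin{equation*}
h\bigl(\sigma_{r}^{(i)}(z)\bigr)=r^{2-p}h(z)\ \text{ if }p>2,\qquad h\bigl(\sigma_{r}^{(i)}(z)\bigr)=h(z)+M_{i}(r)\ \text{ if }p=2,
\end{equation*}
where $M_{i}(r):=M(h,x_{i},r)$. In the $p=2$ case another substitution yields $M_{i}(rs)=M_{i}(r)+M_{i}(s)$ and $M_{i}(1)=0$; combined with monotonicity of $r\mapsto M_{i}(r)$ (automatic from subharmonicity of $h$), this forces $M_{i}(r)=c_{i}\log r$ for some constant $c_{i}\geq 0$.

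The key computation is that composing one scaling about $x_{1}$ with one about $x_{2}$ produces a pure translation along $e$. Indeed
\begin{equation*}
\sigma_{s'}^{(2)}\circ\sigma_{s}^{(1)}(z)=x_{1}+(1-s')\,e+ss'(z-x_{1}),
\end{equation*}
so imposing $ss'=1$ and $s'=1-t$ (which forces $s=1/(1-t)>0$ and is valid for $t<1$) reduces the composition to $z\mapsto z+te$. Applying the two functional equations in succession with these values of $s,s'$ gives, for every $t<1$,
\begin{equation*}
h(z+te)=\begin{cases}h(z),&p>2,\\ h(z)+(c_{2}-c_{1})\log(1-t),&p=2.\end{cases}
\end{equation*}
The mirror composition $\sigma_{1+t}^{(1)}\circ\sigma_{1/(1+t)}^{(2)}$ realises the same translation whenever $t>-1$, and in the $p=2$ case produces $h(z+te)=h(z)+(c_{1}-c_{2})\log(1+t)$. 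Matching the two $p=2$ expressions on the overlap $t\in(-1,1)$ yields $(c_{2}-c_{1})\log(1-t^{2})\equiv 0$, hence $c_{1}=c_{2}$, and the log terms vanish. In all cases $h(z+te)=h(z)$ holds on $\{t<1\}\cup\{t>-1\}=\mathbf{R}$, which finishes condition (3).

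The main subtlety is the $p=2$ case: the $p$-flow carries the additive twist $M(h,x,r)$ rather than a multiplicative weight, so a single composition $\sigma^{(2)}\circ\sigma^{(1)}$ only yields the identity up to an unknown logarithmic residue. The trick that removes this residue is to run the two symmetric compositions in opposite orders; the coefficients appear with opposite signs and a short calculation forces $c_{1}=c_{2}$ automatically, without requiring any a priori information about the densities of $h$ at $x_{1}$ or $x_{2}$.
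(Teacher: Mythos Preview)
Your proof is correct and is in spirit close to the paper's, but it is packaged more cleanly. The paper normalises $x_{1}=0$, $x_{2}=e_{k+1}$, writes $h$ in polar form about each centre, and then runs a case analysis (points on the axis $\mathrm{span}\{e_{k+1}\}$ versus off it; $t<1$ versus $t\geq 1$). Its Subcase~1.2 is exactly your composition trick written in coordinates: evaluating $h\bigl(\tfrac{x}{1-t}\bigr)$ two ways is the same as applying $\sigma^{(2)}_{1-t}\circ\sigma^{(1)}_{1/(1-t)}$. Your formulation via the affine identity $\sigma^{(2)}_{s'}\circ\sigma^{(1)}_{s}(z)=z+te$ when $ss'=1$ makes that computation uniform in $z$, so the separate ``on-axis'' subcase disappears, and running the mirror composition covers $t>-1$ without a reversal argument.

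The genuine difference is in the $p=2$ step. The paper shows $\Theta_{1}=\Theta_{2}$ by evaluating the identity $h(y+te_{k+1})=h(y)+(\Theta_{2}-\Theta_{1})\log(1-t)$ at the three specific points $x_{0}$, $x_{0}+\tfrac{1}{3}e_{k+1}$, $x_{0}+\tfrac{2}{3}e_{k+1}$ (with $h(x_{0})>-\infty$) and comparing. Your argument---matching the two composition orders on $t\in(-1,1)$ to force $(c_{2}-c_{1})\log(1-t^{2})\equiv 0$---is more symmetric and sidesteps the need to locate a finite-value point. Both arguments are short; yours has the advantage of treating $p>2$ and $p=2$ in a single framework up to the additive twist.

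One minor wording point: you say ``the $0$-homogeneity of $h$ at $x_{i}$'', but at $x_{1}$ the hypothesis is $k$-homogeneity. This is harmless, since condition~(2) of Definition~\ref{homogeneous definition} (the $p$-flow invariance) is the same for any $k$, so $k$-homogeneity at $x_{1}$ already gives you the scaling identity you use.
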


\begin{proof}
Let $\{e_{i}\}_{i=1}^{n}$ be the standard basis of $\mathbf{R}^{n}$. Without loss of generality, we assume that $x_{1}=0^{n}$, $x_{2}=e_{k+1}$ and $V^{k}=span\{e_{i}\}_{i=1}^{k}$. Since $h$ is $k$-homogeneous at $x_{1}$ respect to $V^{k}$, it suffices to prove
\begin{equation}\label{cone-splitting principle equation 1}
h(x+te_{k+1})=h(x),
\end{equation}
for all $x\in\mathbf{R}^{n}$ and $t\in\mathbf{R}$. We split into different cases according to $p$ (Riesz characteristic of $F$).

\bigskip
\noindent
{\bf Case 1.} $p>2$.

\bigskip
Since $h$ is $k$-homogeneous at $0^{n}$ and $0$-homogeneous at $e_{k+1}$, By the definition of homogeneous function, we have
\begin{equation*}
h(x)=h_{0^{n},\frac{1}{|x|}}(x)=|x|^{2-p}h\left(\frac{x}{|x|}\right).
\end{equation*}
Let $g_{1}=h|_{S^{n-1}}$, we obtain
\begin{equation}\label{cone-splitting principle equation 2}
h(x)=|x|^{2-p}g_{1}\left(\frac{x}{|x|}\right).
\end{equation}
Similarly, there exists function $g_{2}$ on the unit sphere $S^{n-1}\subset\mathbf{R}^{n}$ such that
\begin{equation}\label{cone-splitting principle equation 3}
h(x)=|x-e_{k+1}|^{2-p}g_{2}\left(\frac{x-e_{k+1}}{|x-e_{k+1}|}\right).
\end{equation}
We split up into different subcases.

\bigskip
\noindent
{\bf Subcase 1.1.} $x\in span\{e_{k+1}\}$.

\bigskip
By (\ref{cone-splitting principle equation 2}) and (\ref{cone-splitting principle equation 3}), we have
\begin{equation*}
2^{2-p}g_{1}(e_{k+1})=h(2e_{k+1})=g_{2}(e_{k+1})
\end{equation*}
and
\begin{equation*}
3^{2-p}g_{1}(e_{k+1})=h(3e_{k+1})=2^{2-p}g_{2}(e_{k+1}).
\end{equation*}
Hence, we obtain $g_{1}(e_{k+1})=g_{2}(e_{k+1})=0$ or $g_{1}(e_{k+1})=g_{2}(e_{k+1})=\infty$, which implies ($\ref{cone-splitting principle equation 1}$).

\bigskip
\noindent
{\bf Subcase 1.2.} $x\not\in span\{e_{k+1}\}$ and $t<1$.

\bigskip
By (\ref{cone-splitting principle equation 2}) and (\ref{cone-splitting principle equation 3}), we have
\begin{equation*}
h\left(\frac{x}{1-t}\right)=|\frac{x}{1-t}|^{2-p}g_{1}\left(\frac{x}{|x|}\right)=\frac{1}{|1-t|^{2-p}}h(x)
\end{equation*}
and
\begin{equation*}
h\left(\frac{x}{1-t}\right)=|\frac{x}{1-t}-e_{k+1}|^{2-p}g_{2}\left(\frac{\frac{x}{1-t}-e_{k+1}}{|\frac{x}{1-t}-e_{k+1}|}\right)=\frac{1}{|1-t|^{2-p}}h(x+te_{k+1}).
\end{equation*}
Then we obtain (\ref{cone-splitting principle equation 1}).

\bigskip
\noindent
{\bf Subcase 1.3.} $x\not\in span\{e_{k+1}\}$ and $t\geq1$.

\bigskip
If $x\not\in span\{e_{k+1}\}$, then $x+te_{k+1}\not\in span\{e_{k+1}\}$. By Subcase 1.2, we have $h(x)=h(x+te_{k+1}-te_{k+1})=h(x+te_{k+1})$, which implies (\ref{cone-splitting principle equation 1}).

\bigskip
\noindent
{\bf Case 2.} $p=2$.

\bigskip
By the property of homogeneous function (see \cite[p.39]{HL1}), there exists two constants $\Theta_{1},\Theta_{2}\geq 0$ and two functions $g_{1},g_{2}$ defined on the unit sphere $S^{n-1}\subset\mathbf{R}^{n}$ such that
\begin{equation*}
\begin{split}
h(x) & = \Theta_{1}\log|x|+g_{1}\left(\frac{x}{|x|}\right)\\
& = \Theta_{2}\log|x-e_{k+1}|+g_{2}\left(\frac{x-e_{k+1}}{|x-e_{k+1}|}\right).
\end{split}
\end{equation*}
First, let us prove $\Theta_{1}=\Theta_{2}$. For any point $y\not\in span\{e_{k+1}\}$ such that $h(y)>-\infty$, by similar calculations in Subcase 1.2, for any $t<1$, we obtain
\begin{equation}\label{cone-splitting principle equation 4}
h(y+te_{k+1})=h(y)+(\Theta_{2}-\Theta_{1})\log(1-t).
\end{equation}
Since $h\not\equiv-\infty$, there exists a point $x_{0}\not\in span\{e_{k+1}\}$ such that $h(x_{0})>-\infty$. By (\ref{cone-splitting principle equation 4}), we have
\begin{equation}\label{cone-splitting principle equation 5}
h(x_{0}+\frac{1}{3}e_{k+1})=h(x_{0})+(\Theta_{2}-\Theta_{1})\log\frac{2}{3}
\end{equation}
and
\begin{equation}\label{cone-splitting principle equation 6}
h(x_{0}+\frac{2}{3}e_{k+1})=h(x_{0})+(\Theta_{2}-\Theta_{1})\log\frac{1}{3}.
\end{equation}
By (\ref{cone-splitting principle equation 5}), we obtain $h(x_{0}+\frac{1}{3}e_{k+1})>-\infty$. Combining this and (\ref{cone-splitting principle equation 4}), it is clear that
\begin{equation}\label{cone-splitting principle equation 7}
h(x_{0}+\frac{1}{3}e_{k+1}+\frac{1}{3}e_{k+1})=h(x_{0}+\frac{1}{3}e_{k+1})+(\Theta_{2}-\Theta_{1})\log\frac{2}{3}.
\end{equation}
Combining (\ref{cone-splitting principle equation 5}), (\ref{cone-splitting principle equation 6}) and (\ref{cone-splitting principle equation 7}), we get $\Theta_{1}=\Theta_{2}$. Next, by the similar argument of Case 1, we obtain (\ref{cone-splitting principle equation 1}).
\end{proof}

\begin{lemma}\label{F subharmonic converge lemma}
Let $u_{i}$ be a sequence of $F$-subharmonic functions on $B_{2}(0^{n})$ with $\|u_{i}\|_{L^{1}(B_{2}(0^{n}))}\leq\Lambda$. Then there exists a subsequence $u_{i_{k}}$ such that $u_{i_{k}}$ converge to $u$ in $L^{1}_{loc}(B_{2}(0^{n}))$, where $u$ is a $F$-subharmonic function on $B_{2}(0^{n})$.
\end{lemma}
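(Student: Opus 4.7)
The plan has two stages. The first is $L^{1}_{\text{loc}}$ compactness, which follows from classical subharmonic theory. Because $F$ satisfies the Positivity condition, each $u_{i}$ is subharmonic in the distributional sense, so $\Delta u_{i}\geq 0$ as a Radon measure on $B_{2}(0^{n})$ and $u_{i}$ is upper semicontinuous. An integration-by-parts argument with a smooth cutoff $\chi\in C_{c}^{\infty}(B_{2}(0^{n}))$, combined with $\|u_{i}\|_{L^{1}(B_{2}(0^{n}))}\leq\Lambda$, yields $\int\chi\,\Delta u_{i}=\int u_{i}\,\Delta\chi$ and hence a locally uniform mass bound on the measures $\Delta u_{i}$. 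The classical compactness theorem for subharmonic functions (see e.g.\ H\"ormander, \emph{The Analysis of Linear Partial Differential Operators I}, Theorem 4.1.9) then produces a subsequence $u_{i_{k}}$ converging in $L^{1}_{\text{loc}}(B_{2}(0^{n}))$ to some $\tilde u$, whose upper semicontinuous envelope $u$ agrees with $\tilde u$ almost everywhere and is classically subharmonic.

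The second stage upgrades $u$ from subharmonic to $F$-subharmonic. The viscosity definition says $v$ is $F$-subharmonic iff no $C^{2}$ test function whose $2$-jet at the contact point lies outside the closed set $F$ can touch $v$ from above. Given such a test function $\varphi$ touching $u$ from above at $x$, the standard closure argument runs as follows: perturb $\varphi$ by a small quadratic $\varepsilon|\cdot-x|^{2}$ so that the perturbed function strictly dominates $u$ away from $x$; then use the $L^{1}_{\text{loc}}$ convergence $u_{i_{k}}\to u$ together with upper semicontinuity to locate contact points $x_{k}\to x$ where $u_{i_{k}}$ is touched from above by a further small vertical shift of the perturbation; finally, since each resulting contact $2$-jet for $u_{i_{k}}$ lies in $F$ and $F$ is closed, the limiting $2$-jet at $x$ also lies in $F$. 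This closure property of the $F$-subharmonic class under $L^{1}_{\text{loc}}$ limits is one of the foundational tools in the Harvey--Lawson framework \cite{HL1}.

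I expect the main obstacle to be precisely this transfer of viscosity information across the $L^{1}_{\text{loc}}$ limit, since integrability convergence contains no pointwise data while the $F$-subharmonic definition is intrinsically pointwise. The perturbation-and-contact-point device sketched above is the standard resolution; it relies on closedness of $F$ together with the good interaction between upper semicontinuous regularization and almost-everywhere limits of subharmonic functions. Once this step is executed, $u$ is $F$-subharmonic on $B_{2}(0^{n})$ and the lemma follows.
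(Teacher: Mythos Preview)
Your first stage matches the paper exactly: $F$-subharmonic implies subharmonic, and classical compactness for subharmonic functions with $L^{1}$ bound gives the convergent subsequence.

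The second stage is where you diverge. The paper does not run a direct viscosity contact-point argument. Instead it uses the distributional route: since $F$ is convex and ST-invariant, the Harvey--Lawson strong Bellman principle \cite{HL3} says the viscosity and distributional notions of $F$-subharmonic coincide. Distributional inequalities pass trivially through $L^{1}_{\text{loc}}$ limits, so the limit is a distributional $F$-subsolution, and then \cite[Theorem 9.5]{HL1} (equivalently \cite[Theorem 1.1]{HL3}) produces an $F$-subharmonic representative $v$ in the $L^{1}_{\text{loc}}$-class of the limit. A final line using the sub-mean-value identity $h(x)=\lim_{s\to 0}\fint_{B_{s}(x)}h$ shows $u=v$ pointwise. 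Convexity of $F$ is used essentially here; your sketch uses only closedness.

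Your direct viscosity argument is not wrong in spirit, but the step ``use $L^{1}_{\text{loc}}$ convergence together with upper semicontinuity to locate contact points $x_{k}\to x$'' is precisely the place where the standard stability theorem does not apply as stated: that theorem is for locally uniform (or decreasing) limits, and $L^{1}_{\text{loc}}$ convergence carries no pointwise information to force $u_{i_{k}}(x_{k})\to u(x)$. One can repair this by exploiting subharmonicity---e.g.\ convergence of spherical means $S(u_{i_{k}},x,r)\to S(u,x,r)$ gives points near $x$ where $u_{i_{k}}$ nearly attains $u(x)$, and the sub-mean-value inequality controls $u_{i_{k}}$ from above---but this needs to be written out, and once you do so you are essentially rebuilding the distributional argument by hand. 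The paper's route is cleaner and is what the Harvey--Lawson references you cite actually do.
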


\begin{proof}
Every $F$-subharmonic function is subharmonic function (see \cite[(6.3)]{HL1}). By the compactness of subharmonic functions, there exists a subsequence $u_{i_{k}}$ converge to $u$ in $L^{1}_{loc}(B_{2}(0^{n}))$. On the other hand, $F$ is a subequation satisfying ST-Invariance and Convexity, which implies that $F$ is regular (see \cite[Section 8]{HL3}) and can not be defined using fewer of the independent variables (see \cite[Proof of Proposition 9.4]{HL1}). Since $u_{i_{k}}$ is $F$-subharmonic, we obtain that $u$ is a $F$-subharmonic distribution (see \cite[Definition 2.3]{HL3}). By \cite[Theroem 1.1]{HL3} or \cite[Theorem 9.5]{HL1}, there exists a $F$-subharmonic function $v$ in the $L^{1}_{loc}$-class $u$. For any subharmonic function $h$ on $B_{2}(0^{n})$, we have
\begin{equation*}
h(x)=\lim_{s\rightarrow\infty}\frac{1}{\omega_{n}s^{n}}\int_{B_{s}(x)}h(y)dy,
\end{equation*}
for any $x\in B_{2}(0^{n})$. Since $u$ and $v$ are subharmonic, we obtain $u=v$ in $B_{2}(0^{n})$.
\end{proof}

\begin{lemma}[Cone-splitting lemma]\label{cone-splitting lemma}
Let $u$ be a $F$-subharmonic function on $B_{2}(0^{n})$ with $\|u\|_{L^{1}(B_{2}(0^{n}))}\leq\Lambda$. For any $\epsilon,\tau>0$, there exists constant $\delta(\epsilon,\tau,\Lambda,F)$ such that if
\begin{enumerate}[~~~~~~(1)]
    \item $u$ is $(k,\delta,1,0^{n})$-homogeneous with respect to $k$-plane $V^{k}$;
    \item $u$ is $(0,\delta,1,y)$-homogeneous, where $y\in B_{1}(0^{n})\setminus B_{\tau}(V^{k})$,
\end{enumerate}
then $u$ is $(k+1,\epsilon,1,0^{n})$-homogeneous.
\end{lemma}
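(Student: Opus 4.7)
The strategy is a contradiction/compactness argument reducing the quantitative statement to the qualitative Cone-splitting principle (Theorem \ref{cone-splitting principle}). Suppose the conclusion fails: there exist $\epsilon_0, \tau_0 > 0$ together with, for each $i$, an $F$-subharmonic function $u_i$ on $B_2(0^n)$ with $\|u_i\|_{L^1(B_2(0^n))} \leq \Lambda$, a $k$-plane $V_i^k$, and a point $y_i \in B_1(0^n) \setminus B_{\tau_0}(V_i^k)$ satisfying conditions $(1)$ and $(2)$ with $\delta_i \downarrow 0$, but with $u_i$ failing to be $(k+1, \epsilon_0, 1, 0^n)$-homogeneous. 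By Lemma \ref{F subharmonic converge lemma} a subsequence gives $u_i \to u_\infty$ in $L^1_{loc}(B_2(0^n))$ with $u_\infty$ $F$-subharmonic; compactness of the Grassmannian and of $\overline{B_1(0^n)}$ then produces $V_i^k \to V_\infty^k$ and $y_i \to y_\infty$ with $\dist(y_\infty, V_\infty^k) \geq \tau_0$.

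The key step is to pass the approximating homogeneous functions to a limit. Condition $(1)$ yields $k$-homogeneous functions $h_i^{(1)}$ with $\|(u_i)_{0^n,1} - h_i^{(1)}\|_{L^1(B_1)} < \delta_i$, and condition $(2)$ yields $0$-homogeneous functions $h_i^{(2)}$ with $\|(u_i)_{y_i,1} - h_i^{(2)}\|_{L^1(B_1)} < \delta_i$. Every homogeneous function on $\mathbf{R}^n$ is globally determined by its values on a bounded set through the explicit scaling form (for $p > 2$ one has $h(x) = |x|^{2-p} g(x/|x|)$, and for $p=2$ the logarithmic analogue $h(x) = \Theta \log|x| + g(x/|x|)$) together with the translation invariance along $V_i^k$; this promotes the $L^1(B_1)$ bound into a uniform $L^1_{loc}(\mathbf{R}^n)$ bound. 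Applying Lemma \ref{F subharmonic converge lemma} to the globally subharmonic $h_i^{(j)}$, and handling the additive normalizations $M(u_i, \cdot, 1)$ that arise in the $p=2$ case by passing to a further subsequence, one obtains limits $h_\infty^{(1)}, h_\infty^{(2)} \in L^1_{loc}(\mathbf{R}^n)$. The $V_i^k$-invariance and the $p$-flow invariance pass to the limit, so $h_\infty^{(1)}$ is $k$-homogeneous at $0^n$ with respect to $V_\infty^k$ and $h_\infty^{(2)}$ is $0$-homogeneous at $0^n$; moreover, up to additive constants, $h_\infty^{(1)}$ coincides with $u_\infty$ on $B_1(0^n)$ and $\widetilde{h}(x) := h_\infty^{(2)}(x - y_\infty)$ coincides with $u_\infty$ on $B_1(y_\infty)$.

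On the nonempty open overlap $B_1(0^n) \cap B_1(y_\infty)$ the functions $h_\infty^{(1)}$ and $\widetilde{h}$ therefore agree up to an additive constant. Propagating this agreement through the two scaling structures and the $V_\infty^k$-translation invariance, $h_\infty^{(1)}$ inherits the scaling behavior of $\widetilde{h}$ at $y_\infty$, thereby furnishing the hypotheses of Theorem \ref{cone-splitting principle} with $x_1 = 0^n$ and $x_2 = y_\infty \notin V_\infty^k$. The conclusion is that $h_\infty^{(1)}$ is $(k+1)$-homogeneous at $0^n$ with respect to $\mathrm{span}\{y_\infty, V_\infty^k\}$. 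For large $i$, $\|u_i - h_\infty^{(1)}\|_{L^1(B_1)} < \epsilon_0$ (after normalizing the additive constant in the $p = 2$ case), contradicting the choice of $u_i$.

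\textbf{Main obstacle.} The delicate point is the global compactness step for the homogeneous approximants $h_i^{(j)}$ and the verification that the common limit carries both homogeneity properties at the two distinct centers. Since only $L^1(B_1)$ control is hypothesized, one must exploit the explicit scaling structure of homogeneous functions---treating $p > 2$ and $p = 2$ separately, the latter with care for the additive normalizations coming from $M(u_i, \cdot, 1)$---to secure $L^1_{loc}(\mathbf{R}^n)$ compactness and then propagate the agreement of $h_\infty^{(1)}$ and the shifted $h_\infty^{(2)}$ from the overlap to the full pair of homogeneity conditions required by Theorem \ref{cone-splitting principle}.
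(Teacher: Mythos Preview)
Your proposal is correct and follows essentially the same contradiction/compactness scheme as the paper's proof. The only difference is organizational: the paper packages your ``delicate compactness step for the homogeneous approximants'' (including the $p=2$ normalizations and the passage from the overlap to both homogeneity properties) into the separate Lemma~\ref{homogeneous converge lemma}, so that after extracting $u_i\to u$, $V_i^k\to V^k$, $y_i\to y$ it can invoke that lemma twice to produce a single globally defined $h$ equal to $u$ near the origin that is simultaneously $k$-homogeneous at $0^n$ and $0$-homogeneous at $y$, and then apply Theorem~\ref{cone-splitting principle} directly.
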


\begin{proof}
We argue by contradiction, assuming that there exists a sequence of $F$-subharmonic functions $u_{i}$ with $\|u\|_{L^{1}(B_{2}(0^{n}))}\leq\Lambda$ and satisfy the following properties:
\begin{enumerate}[~~~~~~(1)]
    \item $u_{i}$ is $(k,i^{-1},1,0^{n})$-homogeneous with respect to $k$-plane $V_{i}^{k}$;
    \item $u_{i}$ is $(0,i^{-1},1,y_{i})$-homogeneous, where $y_{i}\in B_{1}(0^{n})\setminus B_{\tau}(V_{i}^{k})$;
    \item $u_{i}$ is not $(k+1,\epsilon,1,0^{n})$-homogeneous.
\end{enumerate}
After passing to a subsequence, we assume that $\lim_{i\rightarrow\infty}V_{i}^{k}=V^{k}$, $\lim_{i\rightarrow\infty}y_{i}=y\in\overline{B_{1}(0^{n})}\setminus B_{2\tau}(V^{k})$ and $u_{i}$ converge to $u$ in $L_{loc}^{1}(B_{2}(0^{n}))$, where $u$ is a $F$-subharmonic function (see Lemma \ref{F subharmonic converge lemma}). By (1), (2) and Lemma \ref{homogeneous converge lemma}, there exists a function $h$ such that
\begin{enumerate}[~~~~~~(a)]
    \item $h$ is $k$-homogeneous at $0^{n}$ with respect to $V^{k}$;
    \item $h$ is $0$-homogeneous at $y$;
    \item $h=u$ in $B_{2}(0^{n})$.
\end{enumerate}
Hence, by Theorem \ref{cone-splitting principle}, we obtain that $h$ is a $(k+1)$-homogeneous function. Combining this with $u_{i}$ converge to $u$ in $L_{loc}^{1}(B_{2}(0^{n}))$ and (c), it is clear that $u_{i}$ is $(k+1,\epsilon,1,0^{n})$-homogeneous when $i$ is sufficiently large, which is a contradiction.
\end{proof}

\section{Top stratification of $\mathcal{S}(u)$}
In this section, we give proofs of (1) and (2) in Theorem \ref{first main result}.

\begin{proof}[Proof of (1) in Theorem \ref{first main result}]
For any $r\in(0,\frac{1}{5})$, $\{B_{r}(x)\}_{x\in E_{\eta}(u)\cap B_{1}(0^{n})}$ is a covering of $B_{r}(E_{\eta}(u)\cap B_{1}(0^{n}))$, where $E_{\eta}(u)=\{x\in B_{2}(0^{n})~|~\Theta^{S}(u,x)\geq \eta\}$. We take a Vitali covering $\{B_{r}(x_{i})\}_{i=1}^{M}$ such that
\begin{enumerate}[~~~~~~(a)]
    \item $B_{r}(x_{i})\cap B_{r}(x_{j})=\emptyset$ for any $i\neq j$;
    \item $B_{r}(E_{\eta}(u)\cap B_{1}(0^{n}))\subset\bigcup_{i}B_{5r}(x_{i})$;
    \item $x_{i}\in E_{\eta}(u)\cap B_{1}(0^{n})$ for each $i$.
\end{enumerate}
For each $x_{i}$, by the properties of $S(u,x_{i},\cdot)$ (see \cite[Corollary 5.3, Theorem 6.4]{HL1}), we have
\begin{equation*}
\lim_{t\rightarrow0}\frac{S_{-}'(u,x_{i},t)}{K'_{p}(t)}=\Theta^{S}(u,x_{i})
\end{equation*}
and
\begin{equation*}
\frac{S_{-}'(u,x_{i},t)}{K'_{p}(t)} \textit{~~is nondecreasing with respect to $t$}.
\end{equation*}
Since $x_{i}\in E_{\eta}(u)\cap B_{1}(0^{n})$, it then follows that
\begin{equation*}
\frac{S_{-}'(u,x_{i},r)}{K'_{p}(r)}\geq\Theta^{S}(u,x_{i})\geq\eta.
\end{equation*}
Using $S_{-}'(u,x_{i},r)=C(n)K_{n}'(r)\int_{B_{r}(x_{i})}\Delta u$ (see e.g. \cite[Theorem 3.2.16]{Ho} or \cite[p.33]{HL1}), it is clear that
\begin{equation*}
\int_{B_{r}(x_{i})}\Delta u\geq C(p,n)\eta r^{n-p}.
\end{equation*}
By (a), we obtain
\begin{equation}\label{top stratum proof equation 1}
\int_{B_{1+r}(0^{n})}\Delta u\geq\sum_{i=1}^{M}\int_{B_{r}(x_{i})}\Delta u\geq C(p,n)\eta Mr^{n-p}.
\end{equation}
Combining (b) and (\ref{top stratum proof equation 1}), we get
\begin{equation}\label{top stratum proof equation 2}
\text{Vol}(B_{r}(E_{\eta}(u)\cap B_{1}(0^{n})))\leq\sum_{i=1}^{M}\text{Vol}(B_{5r}(x_{i}))\leq C(p,n)\eta^{-1}\left(\int_{B_{1+r}(0^{n})}\Delta u\right)r^{p}.
\end{equation}
On the other hand, for every $y\in\mathcal{S}_{\eta}^{n-p}(u)\cap B_{1}(0^{n})$, since $0$ is a $(n-p+1)$-homogeneous function, by the definition of $\mathcal{S}_{\eta}^{n-p}(u)$, we have
\begin{equation}\label{top stratum proof equation 3}
\|u_{y,r}-0\|_{L^{1}(B_{1}(0^{n}))}\geq\eta,
\end{equation}
for any $r\in(0,1)$. Now, we take $U\in T_{y}(u)$. Combining (\ref{top stratum proof equation 3}) and the definition of tangent, it is clear that $\|U\|_{L^{1}(B_{1}(0^{n}))}\geq\eta$. By \cite[Theorem 10.1]{HL1}, we have
\begin{equation*}
\tilde{C}(p,n)\Theta^{S}(u,y)\geq-\int_{B_{1}(0^{n})}U=\|U\|_{L^{1}(B_{1}(0^{n}))}\geq\eta,
\end{equation*}
which implies $\mathcal{S}_{\eta}^{n-p}(u)\cap B_{1}(0^{n})\subset E_{\tilde{C}^{-1}\eta}(u)\cap B_{1}(0^{n})$. Then by (\ref{top stratum proof equation 2}) (replace $\eta$ by $\tilde{C}^{-1}\eta$), we obtain
\begin{equation*}
\text{Vol}(B_{r}(\mathcal{S}_{\eta}^{n-p}(u)\cap B_{1}(0^{n})))\leq C(p,n)\tilde{C}(p,n)\eta^{-1}\left(\int_{B_{1+r}(0^{n})}\Delta u\right)r^{p},
\end{equation*}
as required.
\end{proof}

\begin{proof}[Proof of (2) in Theorem \ref{first main result}]
We argue by contradiction, assuming that there exists a point $x\in\mathcal{S}(u)\setminus\mathcal{S}^{n-p}(u)$. By definition, there exists $\varphi\in T_{x}(u)$ such that $\varphi$ is $(n-p+1)$-homogeneous but not $n$-homogeneous. It is clear that
\begin{equation*}
dim_{H}(\mathcal{S}(\varphi))\geq n-p+1,
\end{equation*}
where $dim_{H}(\mathcal{S}(\varphi))$ denotes the Hausdorff dimension of $\mathcal{S}(\varphi)$. By (\ref{top stratum proof equation 2}) (replace $u$ by $\varphi$), we get $dim_{H}(E_{\eta}(\varphi)\cap B_{1}(0^{n}))\leq n-p$. By the similar argument, it is clear that $dim_{H}(E_{\eta}(\varphi))\leq n-p$. Since $\mathcal{S}(\varphi)=\bigcup_{\eta}E_{\eta}(\varphi)$, we get
\begin{equation*}
dim_{H}(\mathcal{S}(\varphi))\leq n-p,
\end{equation*}
which is a contradiction.
\end{proof}

\section{Hausdorff dimension of $\mathcal{S}^{k}(u)$}
In this section, we study the Hausdorff dimension of $\mathcal{S}^{k}(u)$. We use an iterated blow up argument as in \cite{ChCo} to prove (3) of Theorem \ref{first main result}. For convenience, we use $T_{x}(u)$ to denote the tangent set to $u$ at $x$ in the following argument.

\begin{lemma}\label{Hausdorff dimension lemma 1}
Let $h$ be a $F$-subharmonic function which is $k$-homogeneous at $0^{n}$ with respect to $k$-plane $V^{k}$. For any $x_{0}\notin V^{k}$, if $\varphi\in T_{x_{0}}(h)$, then $\varphi$ is $(k+1)$-homogeneous at $0^{n}$ with respect to $(k+1)$-plane $V^{k+1}=span\{x_{0},V^{k}\}$.
\end{lemma}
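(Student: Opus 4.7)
The plan is to apply the Cone-splitting Principle (Theorem \ref{cone-splitting principle}) to $\varphi$ with $x_{1}=0^{n}$ and $x_{2}=x_{0}$. Since $x_{0}\notin V^{k}$, this immediately gives that $\varphi$ is $(k+1)$-homogeneous at $0^{n}$ with respect to $span\{x_{0},V^{k}\}=V^{k+1}$, provided I can verify (a) $\varphi$ is $k$-homogeneous at $0^{n}$ with respect to $V^{k}$, and (b) $\varphi$ is $0$-homogeneous at $x_{0}$.

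For (a), under the standing hypothesis of Theorem \ref{first main result} that homogeneity of tangents holds for $F$, every $\varphi\in T_{x_{0}}(h)$ automatically satisfies $\varphi_{0^{n},s}=\varphi$ for all $s>0$, and $\varphi$ is subharmonic on $\mathbf{R}^{n}$ by Lemma \ref{F subharmonic converge lemma}. The $V^{k}$-translation invariance of $h$ (condition (3) of Definition \ref{homogeneous definition}) descends to each blow-up $h_{x_{0},r_{i}}$: since $r_{i}v\in V^{k}$ for $v\in V^{k}$, we have $h(x_{0}+r_{i}(y+v))=h(x_{0}+r_{i}y)$, while the scaling factor $r_{i}^{p-2}$ (for $p>2$) and the subtracted sup-term $M(h,x_{0},r_{i})$ (for $p=2$) do not depend on $v$. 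Passing to the $L^{1}_{loc}$-limit yields $V^{k}$-invariance of $\varphi$, establishing (a).

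For (b), given $\varphi_{0^{n},s}=\varphi$, it suffices to show $\varphi$ is translation invariant along the $x_{0}$-direction. The $k$-homogeneity of $h$ at $0^{n}$ provides the scaling $h(sz)=s^{2-p}h(z)$ (for $p>2$) or $h(sz)=h(z)+\Theta\log s$ (for $p=2$), which collapses the blow-up to a pure translation; in the $p>2$ case,
\begin{equation*}
h_{x_{0},r}(y)\;=\;r^{p-2}h(x_{0}+ry)\;=\;h\!\left(y+\frac{x_{0}}{r}\right),
\end{equation*}
and a parallel identity $h_{x_{0},r}(y)=h(y+x_{0}/r)-T(r)$ with $T(r)=\sup_{|z|\leq1}h(x_{0}/r+z)$ holds for $p=2$. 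Setting $\alpha_{i}:=1+tr_{i}$ and applying the scaling of $h$ again with factor $\alpha_{i}$, one computes $h_{x_{0},r_{i}}(y+tx_{0})=\alpha_{i}^{2-p}h_{x_{0},r_{i}}(y/\alpha_{i})$ for $p>2$ (with an analogous additive $\Theta\log\alpha_{i}$ correction for $p=2$). Since $\alpha_{i}\to 1$ as $r_{i}\to 0$, the scaling factor and additive correction vanish in the limit, and $L^{1}_{loc}$-continuity under the dilation $y\mapsto y/\alpha_{i}$ then forces $\varphi(y+tx_{0})=\varphi(y)$ for all $t\in\mathbf{R}$, proving (b).

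With (a) and (b) in hand, Theorem \ref{cone-splitting principle} concludes the argument. The main obstacle is the bookkeeping of additive sup-terms in the $p=2$ case, where one must reconcile the $M(h,x_{0},r)$ subtraction in the blow-up with the $\Theta\log s$ arising from the scaling of $h$ at $0^{n}$; this parallels the analysis of $\Theta_{1}=\Theta_{2}$ in Case 2 of the proof of Theorem \ref{cone-splitting principle} and is handled analogously.
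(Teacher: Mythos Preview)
Your proposal is correct, and at the computational level it is essentially the paper's own argument: both proofs establish that $\varphi$ is translation invariant along the line $\mathbf{R}x_{0}$ by exploiting the homogeneity of $h$ at $0^{n}$ to rewrite $h_{x_{0},r_{i}}(y+tx_{0})$ as a small dilation of $h_{x_{0},r_{i}}$ and then pass to the $L^{1}_{loc}$-limit, and both handle $V^{k}$-invariance by the obvious inheritance from $h$. The paper carries out these computations directly at the level of ball averages (its equation $\int_{B_{r}(y)}\varphi=\int_{B_{r}(y+v)}\varphi$), while your identity $h_{x_{0},r}(y)=h(y+x_{0}/r)$ is a cleaner pointwise packaging of the same manipulation.

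The one genuine difference is structural: you route the conclusion through the Cone-splitting Principle (Theorem~\ref{cone-splitting principle}), whereas the paper does not. In fact this detour is superfluous in your own write-up: once you have established (a) that $\varphi$ is $k$-homogeneous at $0^{n}$ with respect to $V^{k}$, and the translation invariance of $\varphi$ along $x_{0}$ (which is the substance of your step (b)), you have already verified all three conditions of Definition~\ref{homogeneous definition} for $(k+1)$-homogeneity at $0^{n}$ with respect to $V^{k+1}=\mathrm{span}\{x_{0},V^{k}\}$. There is no need to separately deduce $0$-homogeneity at $x_{0}$ and then invoke Theorem~\ref{cone-splitting principle}; the lemma is done one step earlier.
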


\begin{proof}
By the definition of tangent, there exists a sequence $\{r_{i}\}$ ($\lim_{i\rightarrow\infty}r_{i}=0$) such that $h_{x_{0},r_{i}}$ converge to $\varphi$ in $L_{loc}^{1}(\mathbf{R}^{n})$. Since $\varphi$ is subharmonic, in order to prove Lemma \ref{Hausdorff dimension lemma 1}, it suffices to prove
\begin{equation}\label{Hausdorff dimension lemma 1 equation 1}
\int_{B_{r}(y)}\varphi(x)dx=\int_{B_{r}(y+v)}\varphi(x)dx,
\end{equation}
for any $y\in\mathbf{R}^{n}$, $v\in V^{k+1}$ and $r>0$. First, we consider the case $p>2$.

\bigskip
\noindent
{\bf Case 1.} $p>2$.

\bigskip
We split up into different subcases.

\bigskip
\noindent
{\bf Subcase 1.1.} $v=\lambda x_{0}$ for some $\lambda\in\mathbf{R}$.

\bigskip
By direct calculations, we have
\begin{equation}\label{Hausdorff dimension lemma 1 equation 2}
\begin{split}
\int_{B_{r}(y+v)}\varphi(x)dx & = \int_{B_{r}(y+\lambda x_{0})}\varphi(x)dx\\
& = \lim_{i\rightarrow\infty}\int_{B_{r}(y+\lambda x_{0})}h_{x_{0},r_{i}}(x)dx\\
& = \lim_{i\rightarrow\infty}\int_{B_{r}(y+\lambda x_{0})}r_{i}^{p-2}h(x_{0}+r_{i}x)dx\\
& = \lim_{i\rightarrow\infty}\int_{B_{r}(0^{n})}r_{i}^{p-2}h(x_{0}+r_{i}x+r_{i}y+\lambda r_{i}x_{0})dx.
\end{split}
\end{equation}
Since $h$ is homogeneous, it is clear that
\begin{equation}\label{Hausdorff dimension lemma 1 equation 3}
\begin{split}
& \int_{B_{r}(0^{n})}r_{i}^{p-2}h(x_{0}+r_{i}x+r_{i}y+\lambda r_{i}x_{0})dx\\
= & \int_{B_{r}(0^{n})}(1+\lambda r_{i})^{2-p}r_{i}^{p-2}h(x_{0}+\frac{r_{i}x}{1+\lambda r_{i}}+\frac{r_{i}y}{1+\lambda r_{i}})dx\\
= & \int_{B_{\frac{r}{1+\lambda r_{i}}}(\frac{y}{1+\lambda r_{i}})}(1+\lambda r_{i})^{n+2-p}h_{x_{0},r_{i}}(x)dx.
\end{split}
\end{equation}
On the other hand, since $h_{x_{0},r_{i}}$ converge to $\varphi$ in $L_{loc}^{1}(\mathbf{R}^{n})$, it then follows that
\begin{equation}\label{Hausdorff dimension lemma 1 equation 4}
\begin{split}
& \lim_{i\rightarrow\infty}\int_{B_{\frac{r}{1+\lambda r_{i}}}(\frac{y}{1+\lambda r_{i}})}(1+\lambda r_{i})^{n+2-p}|h_{x_{0},r_{i}}(x)-\varphi(x)|dx\\
\leq & \lim_{i\rightarrow\infty}\int_{B_{r+1}(y)}2|h_{x_{0},r_{i}}(x)-\varphi(x)|dx\\
= & ~ 0.
\end{split}
\end{equation}
Combining (\ref{Hausdorff dimension lemma 1 equation 2}), (\ref{Hausdorff dimension lemma 1 equation 3}) and (\ref{Hausdorff dimension lemma 1 equation 4}), we obtain
\begin{equation*}
\begin{split}
& |\int_{B_{r}(y+v)}\varphi(x)dx-\int_{B_{r}(y)}\varphi(x)dx|\\
= & ~ |\lim_{i\rightarrow\infty}\int_{B_{\frac{r}{1+\lambda r_{i}}}(\frac{y}{1+\lambda r_{i}})}(1+\lambda r_{i})^{n+2-p}h_{x_{0},r_{i}}(x)dx-\int_{B_{r}(y)}\varphi(x)dx|\\
\leq & \lim_{i\rightarrow\infty}\int_{B_{\frac{r}{1+\lambda r_{i}}}(\frac{y}{1+\lambda r_{i}})}(1+\lambda r_{i})^{n+2-p}|h_{x_{0},r_{i}}(x)-\varphi(x)|dx\\
& + |\lim_{i\rightarrow\infty}\int_{B_{\frac{r}{1+\lambda r_{i}}}(\frac{y}{1+\lambda r_{i}})}(1+\lambda r_{i})^{n+2-p}\varphi(x)dx-\int_{B_{r}(y)}\varphi(x)dx|\\
\leq & ~ 0,
\end{split}
\end{equation*}
where we used Lebesgue's dominated convergence theorem for the last inequality. This completes the proof of Subcase 1.1.

\bigskip
\noindent
{\bf Subcase 1.2.} $v\in V^{k}$.

\bigskip
By similar calculations in Subcase 1.1, we have
\begin{equation}\label{Hausdorff dimension lemma 1 equation 5}
\int_{B_{r}(y+v)}\varphi(x)dx=\lim_{i\rightarrow\infty}\int_{B_{r}(0^{n})}r_{i}^{p-2}h(x_{0}+r_{i}x+r_{i}y+r_{i}v)dx.
\end{equation}
Since $h$ is $k$-homogeneous with respect to $k$-plane $V^{k}$, it is clear that
\begin{equation}\label{Hausdorff dimension lemma 1 equation 6}
\begin{split}
\int_{B_{r}(0^{n})}r_{i}^{p-2}h(x_{0}+r_{i}x+r_{i}y+r_{i}v)dx
& = \int_{B_{r}(0^{n})}r_{i}^{p-2}h(x_{0}+r_{i}x+r_{i}y)dx\\
& = \int_{B_{r}(y)}h_{x_{0},r_{i}}(x)dx.
\end{split}
\end{equation}
Combining (\ref{Hausdorff dimension lemma 1 equation 5}), (\ref{Hausdorff dimension lemma 1 equation 6}) and $h_{x_{0},r_{i}}$ converge to $\varphi$ in $L_{loc}^{1}(\mathbf{\mathbf{R}}^{n})$, we get (\ref{Hausdorff dimension lemma 1 equation 1}), which completes the proof of Subcase 1.2.

Next, we consider the case $p=2$.

\bigskip
\noindent
{\bf Case 2.} $p=2$.

\bigskip
Similarly, we split up into different subcases.

\bigskip
\noindent
{\bf Subcase 2.1.} $v=\lambda x_{0}$ for some $\lambda\in\mathbf{R}$.

\bigskip
By the definition of tangential 2-flow (see Definition \ref{definition of tangential p-flow}), we have
\begin{equation*}
\begin{split}
\int_{B_{r}(y+v)}\varphi(x)dx & = \int_{B_{r}(y+\lambda x_{0})}\varphi(x)dx\\
& = \lim_{i\rightarrow\infty}\int_{B_{r}(y+\lambda x_{0})}h_{x_{0},r_{i}}(x)dx\\
& = \lim_{i\rightarrow\infty}\int_{B_{r}(y+\lambda x_{0})}\left(h(x_{0}+r_{i}x)-M(h,x_{0},r_{i})\right)dx\\
& = \lim_{i\rightarrow\infty}\int_{B_{r}(0^{n})}\left(h(x_{0}+r_{i}x+r_{i}y+\lambda r_{i}x_{0})-M(h,x_{0},r_{i})\right)dx.
\end{split}
\end{equation*}
By the homogeneity of $h$, we obtain
\begin{equation*}
\begin{split}
  & \int_{B_{r}(0^{n})}\left(h(x_{0}+r_{i}x+r_{i}y+\lambda r_{i}x_{0})-M(h,x_{0},r_{i})\right)dx\\
= & \int_{B_{r}(0^{n})}\left( h(x_{0}+\frac{r_{i}x}{1+\lambda r_{i}}+\frac{r_{i}y}{1+\lambda r_{i}})+M(h,0^{n},1+\lambda r_{i})-M(h,x_{0},r_{i})\right)dx\\
= & \int_{B_{\frac{r}{1+\lambda r_{i}}}(\frac{y}{1+\lambda r_{i}})}h_{x_{0},r_{i}}(x)dx+\int_{B_{r}(0^{n})}M(h,0^{n},1+\lambda r_{i})dx.
\end{split}
\end{equation*}
Since $h$ is homogeneous, we get $M(h,0^{n},1)=0$. By the continuity of $M(h,0^{n},\cdot)$, it is clear that
\begin{equation*}
\int_{B_{r}(y+v)}\varphi(x)dx=\lim_{i\rightarrow\infty}\int_{B_{\frac{r}{1+\lambda r_{i}}}(\frac{y}{1+\lambda r_{i}})}h_{x_{0},r_{i}}(x)dx.
\end{equation*}
By the similar argument in Subcase 1.1, we complete the proof of Subcase 2.1.

\bigskip
\noindent
{\bf Subcase 2.2.} $v\in V^{k}$.
The proof of Subcase 2.2 is similar to the proof of Subcase 1.2.

\bigskip
\end{proof}

\begin{lemma}\label{Hausdorff dimension lemma 2}
Let $u$ be a $F$-subharmonic function on $B_{2}(0^{n})$. If $\text{Haus}^{l}(\mathcal{S}^{k}(u))>0$ for $l>k$, then $\text{Haus}^{l}(A)>0$, where
\begin{equation*}
A:=\{y\in B_{2}(0^{n})~|~\text{there exists a tangent $\varphi\in T_{y}(u)$ such that $\text{Haus}^{l}(\mathcal{S}^{k}(\varphi))>0$}\}.
\end{equation*}
\end{lemma}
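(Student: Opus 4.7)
The plan is a blow-up and density argument in the spirit of Cheeger--Colding \cite{ChCo}. First I would reduce to a fixed quantitative stratum: by the identity $\mathcal{S}^{k}(u)=\bigcup_{\eta>0}\mathcal{S}^{k}_{\eta}(u)$ from (\ref{quantitative stratification}), the hypothesis $\text{Haus}^{l}(\mathcal{S}^{k}(u))>0$ yields some $\eta_{0}>0$ with $\text{Haus}^{l}(\mathcal{S}^{k}_{\eta_{0}}(u))>0$. Note that $\mathcal{S}^{k}_{\eta_{0}}(u)$ is closed, since its complement is the union over $s\in(0,1)$ and $(k+1)$-homogeneous $h$ of the open conditions $\{x:\|u_{x,s}-h\|_{L^{1}(B_{1}(0^{n}))}<\eta_{0}\}$ (openness in $x$ follows from $L^{1}$-continuity of translation). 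The goal is to prove that a large portion of $\mathcal{S}^{k}_{\eta_{0}}(u)$ lies in $A$.

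For the density step, I would apply Frostman's lemma to obtain a nonzero Radon measure $\mu$ on $\mathcal{S}^{k}_{\eta_{0}}(u)$ satisfying $\mu(B_{r}(x))\leq r^{l}$ for every $x,r$. Any ball cover $\{B_{s_{j}}(z_{j})\}$ of a Borel set $B$ gives $\sum s_{j}^{l}\geq\sum\mu(B_{s_{j}}(z_{j}))\geq\mu(B)$, hence $\mu\leq\text{Haus}^{l}_{\infty}$ on Borel sets. Combining this with Mattila's density theorem (so that $\mu$-a.e.\ $y$ has $\limsup_{r\downarrow 0}\mu(B_{r}(y))/r^{l}>0$), for $\mu$-a.e.\ $y$ there exist $c_{0}>0$ and $r_{i}\downarrow 0$ with $\text{Haus}^{l}_{\infty}(\mathcal{S}^{k}_{\eta_{0}}(u)\cap B_{r_{i}}(y))\geq c_{0}r_{i}^{l}$. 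Fix such a point $y$. By Lemma \ref{F subharmonic converge lemma}, pass to a subsequence so that $u_{y,r_{i}}\to\varphi$ in $L^{1}_{loc}(\mathbf{R}^{n})$ for some $\varphi\in T_{y}(u)$. The rescaled sets $X_{i}:=r_{i}^{-1}(\mathcal{S}^{k}_{\eta_{0}}(u)\cap B_{r_{i}}(y)-y)\subset\overline{B_{1}(0^{n})}$ satisfy $\text{Haus}^{l}_{\infty}(X_{i})\geq c_{0}$ by the scale-invariance of Hausdorff content, and a further subsequence gives $X_{i}\to X$ in Hausdorff distance.

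The crux is the inclusion $X\subset\mathcal{S}^{k}_{\eta_{0}}(\varphi)$, for which two ingredients are needed. First, the scaling identity $(u_{y,r_{i}})_{z,s}=u_{y+r_{i}z,\,r_{i}s}$ is a direct check from Definition \ref{definition of tangential p-flow} (the $p=2$ case uses $M(u_{y,r_{i}},z,s)=M(u,y+r_{i}z,r_{i}s)-M(u,y,r_{i})$), and this yields $X_{i}\subset\mathcal{S}^{k}_{\eta_{0}}(u_{y,r_{i}})$. Second, I would establish the upper-semicontinuity statement: if $u_{i}\to v$ in $L^{1}_{loc}$ and $z_{i}\to z$ with $z_{i}\in\mathcal{S}^{k}_{\eta_{0}}(u_{i})$, then $z\in\mathcal{S}^{k}_{\eta_{0}}(v)$. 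The proof is by contradiction in the style of Lemma \ref{cone-splitting lemma}: if $v$ were $(k+1,\eta_{0},s,z)$-homogeneous with approximant $h$, then $L^{1}$-continuity of $(u,x)\mapsto u_{x,s}$ would force $\|(u_{i})_{z_{i},s}-h\|_{L^{1}(B_{1})}<\eta_{0}$ for large $i$, contradicting $z_{i}\in\mathcal{S}^{k}_{\eta_{0}}(u_{i})$. Combining these two with upper-semicontinuity of Hausdorff content under Hausdorff limits, $\limsup_{i}\text{Haus}^{l}_{\infty}(X_{i})\leq\text{Haus}^{l}_{\infty}(X)$, I obtain $X\subset\mathcal{S}^{k}_{\eta_{0}}(\varphi)$ and therefore $\text{Haus}^{l}(\mathcal{S}^{k}(\varphi))\geq\text{Haus}^{l}_{\infty}(X)\geq c_{0}>0$. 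Hence $y\in A$ for $\mu$-a.e.\ $y$, so $\mu(A)>0$, and finally $\text{Haus}^{l}(A)\geq\text{Haus}^{l}_{\infty}(A)\geq\mu(A)>0$.

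The main obstacle is the upper-semicontinuity of $\mathcal{S}^{k}_{\eta_{0}}$ under the joint convergence of functions and base points; care is required in the $p=2$ case because of the $M(u,\cdot,s)$ shift in the tangential $p$-flow, and one must verify that the $(k+1)$-homogeneous model $h$ can be kept fixed while $s$ and $z$ pass to their limits. A secondary technical point is that the density step must be phrased in terms of Hausdorff \emph{content} rather than Hausdorff measure, since only the former is upper-semicontinuous under Hausdorff convergence of compact sets; this is precisely why a Frostman measure is used rather than a direct density-point theorem for $\text{Haus}^{l}$.
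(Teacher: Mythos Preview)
Your overall strategy coincides with the paper's: reduce to a single quantitative stratum $\mathcal{S}^{k}_{\eta_{0}}(u)$ via (\ref{quantitative stratification}), locate density points for the $l$-dimensional Hausdorff content, blow up along $r_{i}\downarrow 0$, and transfer the stratum to the tangent by the upper-semicontinuity statement you call ``the crux.'' That statement is exactly the paper's Claim, and your sketch of its proof (joint $L^{1}$-continuity of $(v,z)\mapsto v_{z,s}$, which is Lemma \ref{tangential p-flow stable lemma}) matches the paper. The scaling identity $(u_{y,r_{i}})_{z,s}=u_{y+r_{i}z,\,r_{i}s}$ and the use of upper semicontinuity of Hausdorff content under Hausdorff convergence of compacta are likewise the same.

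There is, however, a real gap in your density step. After producing a Frostman measure $\mu$ with $\mu(B_{r}(x))\leq r^{l}$ you assert that $\mu$-a.e.\ $y$ satisfies $\limsup_{r\downarrow 0}\mu(B_{r}(y))/r^{l}>0$. This is false for arbitrary Frostman measures: take $E=[0,1]\subset\mathbf{R}$, $l=\tfrac{1}{2}$, and let $\mu$ be one half of Lebesgue measure restricted to $[0,1]$. Then $\mu(B_{r}(x))\leq r\leq r^{1/2}$ for $r\leq 1$, so $\mu$ is an admissible Frostman measure, yet $\limsup_{r\downarrow 0}\mu(B_{r}(y))/r^{1/2}=\limsup_{r\downarrow 0}r^{1/2}=0$ at every point. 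The difficulty is that Frostman's lemma does not prevent $\mu$ from living on a set of dimension strictly larger than $l$, in which case $\mu$ is too diffuse to detect $l$-dimensional density. The paper sidesteps this by invoking the density theorem for Hausdorff content directly: for \emph{any} set $E$, $\text{Haus}^{l}$-almost every $x\in E$ satisfies $\limsup_{r\to 0}\text{Haus}^{l}_{\infty}(E\cap B_{r}(x))/(\omega_{l}r^{l})\geq 2^{-l}$, with no finiteness assumption on $\text{Haus}^{l}(E)$. If you replace your Frostman step by this theorem, the resulting set $D^{l}_{\eta_{0}}(u)$ of density points satisfies $\text{Haus}^{l}(\mathcal{S}^{k}_{\eta_{0}}(u)\setminus D^{l}_{\eta_{0}}(u))=0$, your blow-up argument shows $D^{l}_{\eta_{0}}(u)\subset A$, and $\text{Haus}^{l}(A)>0$ follows immediately; the argument then becomes identical to the paper's.
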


\begin{proof}
Combining $\text{Haus}^{l}(\mathcal{S}^{k}(u))>0$ and $\mathcal{S}^{k}(u)=\bigcup_{\eta}\mathcal{S}_{\eta}^{k}(u)$ (see (\ref{quantitative stratification})), there exists a constant $\eta_{0}>0$ such that $\text{Haus}^{l}(\mathcal{S}_{\eta_{0}}^{k}(u))>0$. By the property of Hausdorff measure, we have $\text{Haus}^{l}(\mathcal{S}_{\eta_{0}}^{k}(u)\setminus D_{\eta_{0}}^{l}(u))=0$, where
\begin{equation*}
D_{\eta_{0}}^{l}(u)=\{x\in \mathcal{S}_{\eta_{0}}^{k}(u)~|~\limsup_{r\rightarrow0}\frac{\text{Haus}_{\infty}^{l}(\mathcal{S}_{\eta_{0}}^{k}(u)\cap B_{r}(x))}{\omega_{l}r^{l}}\geq 2^{-l}\}.
\end{equation*}
Therefore, in order to prove Lemma \ref{Hausdorff dimension lemma 2}, it suffices to prove that there exists a tangent $\varphi\in T_{y}(u)$ such that $\text{Haus}^{l}(\mathcal{S}^{k}(\varphi))>0$ for any $y\in D_{\eta_{0}}^{l}(u)$. By the definition of $D_{\eta_{0}}^{l}(u)$, there exists a sequence of $\{r_{j}\}$ ($\lim_{i\rightarrow\infty}r_{j}=0$) such that
\begin{equation*}
\lim_{j\rightarrow\infty}\frac{\text{Haus}_{\infty}^{l}(\mathcal{S}_{\eta_{0}}^{k}(u)\cap B_{r_{j}}(y))}{\omega_{l}r_{j}^{l}}\geq 2^{-l}.
\end{equation*}
If $y+r_{i}z\in\mathcal{S}_{\eta_{0}}^{k}(u)\cap B_{r_{j}}(y)$, then $z\in\mathcal{S}_{\eta_{0}}^{k}(u_{y,r_{j}})\cap B_{1}(0^{n})$. Combining this and the definition of Hausdorff measure, we have
\begin{equation*}
\lim_{j\rightarrow\infty}\text{Haus}_{\infty}^{l}(\mathcal{S}_{\eta_{0}}^{k}(u_{y,r_{j}})\cap B_{1}(0^{n}))\geq 2^{-l}.
\end{equation*}
After passing to a subsequence, we can assume that $u_{y,r_{j}}$ converge to $\varphi\in T_{y}(u)$ in $L_{loc}^{1}(\mathbf{R}^{n})$.

\bigskip
\noindent
{\bf Claim.}  If $z_{j}\in \mathcal{S}_{\eta_{0}}^{k}(u_{y,r_{j}})$ and $\lim_{j\rightarrow\infty}z_{j}=z$, then $z\in \mathcal{S}_{\eta_{0}}^{k}(\varphi)$.

\bigskip
\begin{proof}[Proof of Claim]
For any $r\in(0,1)$ and $(k+1)$-homogeneous function $h$, we have
\begin{equation*}
\begin{split}
& \int_{B_{1}(0^{n})}|\varphi_{z,r}(x)-h(x)|dx\\
\geq & \int_{B_{1}(0^{n})}|(u_{y,r_{j}})_{z_{j},r}(x)-h(x)|dx
-\int_{B_{1}(0^{n})}|\varphi_{z_{j},r}(x)-(u_{y,r_{j}})_{z_{j},r}(x)|dx-\int_{B_{1}(0^{n})}|\varphi_{z,r}(x)-\varphi_{z_{j},r}(x)|dx.
\end{split}
\end{equation*}
Letting $j\rightarrow\infty$, by Lemma \ref{tangential p-flow stable lemma}, we obtain
\begin{equation*}
\int_{B_{1}(0^{n})}|\varphi_{z,r}(x)-h(x)|dx\geq\eta_{0},
\end{equation*}
which implies $z\in \mathcal{S}_{\eta_{0}}^{k}(\varphi)$. We complete the proof of Claim.
\end{proof}

Combining Claim and the property of Hausdorff measure, it is clear that
\begin{equation*}
\text{Haus}^{l}(\mathcal{S}_{\eta_{0}}^{k}(\varphi)\cap B_{1}(0^{n}))\geq\lim_{j\rightarrow\infty}\text{Haus}_{\infty}^{l}(\mathcal{S}_{\eta_{0}}^{k}(u_{y,r_{j}})\cap B_{1}(0^{n}))\geq 2^{-l}>0,
\end{equation*}
as desired.
\end{proof}

\begin{theorem}
Let $u$ be a $F$-subharmonic function on $B_{2}(0^{n})$. Then for any $1\leq k\leq n$, we have
\begin{equation*}
dim_{H}(\mathcal{S}^{k}(u))\leq k.
\end{equation*}
\end{theorem}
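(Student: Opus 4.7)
Working under the homogeneity of tangents hypothesis inherited from Theorem \ref{first main result}, the plan is to pair Lemma \ref{Hausdorff dimension lemma 1} (which bumps up the homogeneity of a tangent by one whenever one blows up off the current homogeneity plane) with Lemma \ref{Hausdorff dimension lemma 2} (which propagates an $l$-dimensional portion of $\mathcal{S}^{k}$ into a tangent). Iterating this pair produces a tangent that is $n$-homogeneous with respect to all of $\mathbf{R}^{n}$, which must be identically zero and therefore has empty singular set, contradicting the surviving positive $l$-measure.

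Suppose for contradiction that $\text{Haus}^{l}(\mathcal{S}^{k}(u))>0$ for some $l>k$. I construct $F$-subharmonic functions $\varphi^{(j)}$ together with $(j-1)$-dimensional linear subspaces $V^{j-1}\subset\mathbf{R}^{n}$ inductively, with the convention $V^{0}=\{0^{n}\}$, such that $\varphi^{(j)}$ is $(j-1)$-homogeneous at $0^{n}$ with respect to $V^{j-1}$ and $\text{Haus}^{l}(\mathcal{S}^{k}(\varphi^{(j)}))>0$. For the base case $j=1$, apply Lemma \ref{Hausdorff dimension lemma 2} to $u$ to obtain a tangent $\varphi^{(1)}\in T_{y_{1}}(u)$ with the desired singular set property; homogeneity of tangents for $F$ forces $\varphi^{(1)}_{0^{n},r}=\varphi^{(1)}$, establishing $0$-homogeneity at $0^{n}$. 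For the inductive step, apply Lemma \ref{Hausdorff dimension lemma 2} to $\varphi^{(j)}$ to extract a density point $y_{j+1}$ and a tangent $\varphi^{(j+1)}\in T_{y_{j+1}}(\varphi^{(j)})$ whose $k$-th stratum has positive $l$-measure; then Lemma \ref{Hausdorff dimension lemma 1} (with $x_{0}=y_{j+1}$) upgrades $\varphi^{(j+1)}$ to $j$-homogeneity at $0^{n}$ with respect to $V^{j}:=\mathrm{span}\{y_{j+1},V^{j-1}\}$, provided $y_{j+1}\notin V^{j-1}$.

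The existence of a density point $y_{j+1}\notin V^{j-1}$ splits into two regimes. If $j-1<l$ (in particular whenever $j\leq k+1$), then $\text{Haus}^{l}(V^{j-1})=0$, whereas the density set $D^{l}_{\eta_{0}}(\varphi^{(j)})$ has positive $l$-measure by the standard density property of Hausdorff measure employed in Lemma \ref{Hausdorff dimension lemma 2}, so it must meet $\mathbf{R}^{n}\setminus V^{j-1}$. If instead $j-1\geq k+1$, I claim $\mathcal{S}^{k}(\varphi^{(j)})\cap V^{j-1}=\emptyset$, so the density point automatically avoids $V^{j-1}$. To verify the claim, fix $v\in V^{j-1}$; translation invariance $\varphi^{(j)}(y+v)=\varphi^{(j)}(y)$ combined with the scale identity $\varphi^{(j)}_{0^{n},r}=\varphi^{(j)}$ yields, via a direct computation from Definition \ref{definition of tangential p-flow} in each case $p>2$ and $p=2$, the equality $\varphi^{(j)}_{v,r}=\varphi^{(j)}$ for every $r>0$. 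Hence $\varphi^{(j)}$ is itself a tangent of $\varphi^{(j)}$ at $v$, and restricting $V^{j-1}$ to any $(k+1)$-subspace (possible since $j-1\geq k+1$) shows this tangent is $(k+1)$-homogeneous at $v$, so $v\notin\mathcal{S}^{k}(\varphi^{(j)})$.

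With existence secured at every stage, the iteration reaches $j=n+1$, yielding $\varphi^{(n+1)}$ that is $n$-homogeneous at $0^{n}$ with respect to $V^{n}=\mathbf{R}^{n}$. Translation invariance on all of $\mathbf{R}^{n}$ makes $\varphi^{(n+1)}$ equal to a constant $c$, and the scale identity forces $c=0$: when $p>2$ one needs $r^{p-2}c=c$ for every $r>0$, and when $p=2$ the tangential flow gives $c-c=0$ which must equal $c$. Thus $\mathcal{S}^{k}(\varphi^{(n+1)})=\mathcal{S}^{k}(0)=\emptyset$, contradicting $\text{Haus}^{l}(\mathcal{S}^{k}(\varphi^{(n+1)}))>0$. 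The main obstacle is the claim $V^{j-1}\cap\mathcal{S}^{k}(\varphi^{(j)})=\emptyset$ for $j-1\geq k+1$, whose proof interlocks the scale identity at $0^{n}$ with translation invariance along $V^{j-1}$ and must be carried out by case analysis on $p$ because Definition \ref{definition of tangential p-flow} is case-dependent.
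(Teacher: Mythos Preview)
Your argument is correct and follows the same iterated blow-up scheme as the paper, pairing Lemma \ref{Hausdorff dimension lemma 1} with Lemma \ref{Hausdorff dimension lemma 2}. The one difference is that you overshoot: the paper halts the induction the moment a $(k+1)$-homogeneous tangent appears. In your notation, at the step producing $\varphi^{(k+2)}\in T_{y_{k+2}}(\varphi^{(k+1)})$, Lemma \ref{Hausdorff dimension lemma 1} already makes $\varphi^{(k+2)}$ a $(k+1)$-homogeneous tangent, and since the blow-up center $y_{k+2}$ was drawn from the density set $D^{l}_{\eta_{0}}(\varphi^{(k+1)})\subset\mathcal{S}^{k}_{\eta_{0}}(\varphi^{(k+1)})\subset\mathcal{S}^{k}(\varphi^{(k+1)})$ (precisely the containment you invoke in your second regime), this immediately contradicts the defining property of $\mathcal{S}^{k}$. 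Your second regime $j-1\geq k+1$, the claim $V^{j-1}\cap\mathcal{S}^{k}(\varphi^{(j)})=\emptyset$, and the march to $j=n+1$ with its constant-function analysis are therefore all unnecessary, though the arguments you supply for them are sound.
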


\begin{proof}
We argue by contradiction. Suppose that $\text{Haus}^{l}(\mathcal{S}^{k}(u))>0$ for some $l>k$. By Lemma \ref{Hausdorff dimension lemma 2}, there exists $y_{0}\in \mathcal{S}^{k}(u)$ and $\varphi_{0}\in T_{y_{0}}(u)$ such that $\text{Haus}^{l}(\mathcal{S}^{k}(\varphi_{0}))>0$. We assume that $\varphi_{0}$ is $m$-homogeneous with respect to $m$-plane $V_{0}^{m}$, where $m\leq k$. By Lemma \ref{Hausdorff dimension lemma 2}, $\text{Haus}^{l}(\mathcal{S}^{k}(\varphi_{0}))>0$ and $m\leq k<l$, there exists $y_{1}\in \mathcal{S}^{k}(\varphi_{0})\setminus V_{0}^{m}$ and $\varphi_{1}\in T_{y_{1}}(\varphi_{0})$ such that $\text{Haus}^{l}(\mathcal{S}^{k}(\varphi_{1}))>0$. By Lemma \ref{Hausdorff dimension lemma 1}, we obtain that $\varphi_{1}$ is $(m+1)$-homogeneous with respect to $(m+1)$-plane $V_{1}^{m+1}=span\{V_{0}^{m},y_{1}\}$. Repeating this process, there exist $y_{k-m+1}\in \mathcal{S}^{k}(\varphi_{k-m})\setminus V_{k-m}^{k}$ and $\varphi_{k-m+1}\in T_{y_{k-m+1}}(\varphi_{k-m})$ such that $\varphi_{k-m+1}$ is $(k+1)$-homogeneous, which contradicts with the definition of $\mathcal{S}^{k}(\varphi_{k-m})$.
\end{proof}

\section{Rectifiability of $\mathcal{S}^{k}(u)$}
In this section, we prove the $k^{th}$ stratification $\mathcal{S}^{k}(u)$ is $k$-rectifiable when uniqueness of tangents holds for $F$ (i.e., Theorem \ref{Rectifiability theorem}). Let $u$ be a $F$-subharmonic function on $B_{2}(0^{n})$ with $\|u\|_{L^{1}(B_{1}(0^{n}))}\leq\Lambda$. First, we define
\begin{equation*}
F_{\delta,\eta}(u)=\{x\in B_{2}(0^{n})~|~\text{$u$ is $(0,\delta,r,x)$-homogeneous for any $r\in(0,\eta)$}\}.
\end{equation*}
For any $x\in \left(F_{\delta,\eta}(u)\cap \mathcal{S}_{\epsilon}^{k}(u)\right)\setminus\mathcal{S}^{k-1}(u)$, where $\epsilon>0$, let $\varphi$ be the unique tangent to $u$ at $x$. We assume $\varphi$ is $k$-homogeneous with respect to $k$-plane $V_{\varphi}^{k}$. It then follows that $\|\varphi\|_{L^{1}(B_{2}(0^{n}))}\leq\Lambda_{1}(\Lambda,F)$.

\begin{lemma}\label{Rectifiability lemma 1}
For any $\tau\in(0,1)$, there exists $r_{x}$ such that for any $r<r_{x}$, we have
\begin{equation*}
F_{\delta,1}(u_{x,r})\subset B_{2\tau}(V_{\varphi}^{k}),
\end{equation*}
where $\delta=\delta(\epsilon,2\tau,\Lambda_{1},F)$ is the constant in Lemma \ref{cone-splitting lemma}.
\end{lemma}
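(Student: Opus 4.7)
The plan is a contradiction argument based on compactness and cone-splitting. Suppose the conclusion fails for some $\tau\in(0,1)$: then one can extract sequences $r_i\searrow 0$ and $y_i\in F_{\delta,1}(u_{x,r_i})$ with $y_i\notin B_{2\tau}(V_\varphi^k)$. Passing to a subsequence, $y_i\to y_\infty$ with $y_\infty\notin B_\tau(V_\varphi^k)$, and I focus on the generic case $y_\infty\in B_1(0^n)$ (the case $|y_\infty|\geq 1$ is handled by an analogous rescaled argument, using a smaller window centered at $y_i$).

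First, I would exploit uniqueness of tangents for $F$. Since $T_x(u)=\{\varphi\}$ is a singleton, every blow-up subsequence must converge to $\varphi$, so $u_{x,r_i}\to\varphi$ in $L^1_{loc}(\mathbf{R}^n)$. Because $\varphi$ is $k$-homogeneous with respect to $V_\varphi^k$ with $\|\varphi\|_{L^1(B_2(0^n))}\leq\Lambda_1$, the $L^1_{loc}$ convergence implies that for any prescribed $\delta'>0$, $u_{x,r_i}$ is $(k,\delta',1,0^n)$-homogeneous with respect to $V_\varphi^k$ once $i$ is large enough; in particular this is true for $\delta'=\delta(\epsilon,2\tau,\Lambda_1,F)$ given by the cone-splitting lemma. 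In the case $p=2$ one also has to track the base-point shift $M(u_{x,r_i},0^n,1)\to M(\varphi,0^n,1)=0$, which follows from continuity under $L^1_{loc}$-convergence (essentially the stability of the tangential $p$-flow invoked elsewhere in the paper).

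Next, I apply Lemma \ref{cone-splitting lemma} with parameters $(\epsilon,2\tau,\Lambda_1,F)$. For $i$ sufficiently large, $u_{x,r_i}$ is simultaneously $(k,\delta,1,0^n)$-homogeneous with respect to $V_\varphi^k$ (by the preceding step), $(0,\delta,1,y_i)$-homogeneous (by the very definition of $F_{\delta,1}(u_{x,r_i})$ applied at scale $s=1$), and satisfies $y_i\in B_1(0^n)\setminus B_{2\tau}(V_\varphi^k)$ (from $y_\infty\notin B_\tau(V_\varphi^k)$ and $y_i\to y_\infty$). The cone-splitting lemma then forces $u_{x,r_i}$ to be $(k+1,\epsilon,1,0^n)$-homogeneous. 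Unwinding the definition of the tangential $p$-flow, this is equivalent to $u$ being $(k+1,\epsilon,r_i,x)$-homogeneous. Since $r_i\in(0,1)$, this directly contradicts $x\in\mathcal{S}_\epsilon^k(u)$, which by definition prohibits $u$ from being $(k+1,\epsilon,s,x)$-homogeneous for any $s\in(0,1)$.

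The main technical obstacle I expect is the first step — converting qualitative $L^1_{loc}$-convergence of the blow-ups $u_{x,r_i}\to\varphi$ into quantitative $(k,\delta,1,0^n)$-homogeneity with respect to the fixed plane $V_\varphi^k$, since the definition of $(k,\delta,1,0^n)$-homogeneity involves $(u_{x,r_i})_{0^n,1}$, which differs from $u_{x,r_i}$ by scaling/shift conventions depending on $p$. This reduces to the continuity of the tangential $p$-flow under $L^1_{loc}$-convergence, which is the content of the auxiliary stability lemma already used in the paper.
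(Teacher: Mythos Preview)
Your argument is correct and follows the same contradiction-plus-compactness skeleton as the paper: both pick sequences $r_i\to 0$ and $y_i\in F_{\delta,1}(u_{x,r_i})\setminus B_{2\tau}(V_\varphi^k)$, use uniqueness of tangents to get $u_{x,r_i}\to\varphi$ in $L^1_{loc}$, and then invoke cone-splitting to contradict $x\in\mathcal{S}^k_\epsilon(u)$. The one genuine difference is the object to which cone-splitting is applied. The paper passes everything to the limit first: it extracts a limit $z$ of the $y_i$ and a limit $h$ of the $0$-homogeneous approximants $h_i$, uses the tangential-flow stability lemma to show $z\in F_{\delta,1}(\varphi)\setminus B_{2\tau}(V_\varphi^k)$, and then applies cone-splitting to the tangent $\varphi$ itself. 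You instead transfer the $k$-homogeneity of $\varphi$ back to $u_{x,r_i}$ and apply cone-splitting directly to $u_{x,r_i}$. Your route is slightly more direct in that it avoids passing the approximants $h_i$ to a limit and checking that the limit is still homogeneous; the trade-off is that you need a uniform $L^1(B_2)$ bound on the blow-ups $u_{x,r_i}$ (not only on $\varphi$) to feed into the cone-splitting lemma, so the $\delta$ you use should really be chosen with respect to that uniform blow-up bound rather than $\Lambda_1$. One small slip: membership in $F_{\delta,1}$ only gives $(0,\delta,s,y_i)$-homogeneity for $s\in(0,1)$, not at $s=1$, so you cannot literally ``apply at scale $s=1$''; this is harmless (handle it by continuity of the tangential $p$-flow in $s$, or work at a scale just below $1$), and the paper is equally casual about the same point.
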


\begin{proof}
We argue by contradiction, assuming that there exist $\{r_{i}\}$ and $\{z_{i}\}$ such that $\lim_{i\rightarrow\infty}r_{i}=0$ and $z_{i}\in F_{\delta,1}(u_{x,r_{i}})\setminus B_{2\tau}(V_{\varphi}^{k})$. It then follows that there exists homogeneous function $h_{i}$ such that
\begin{equation*}
\int_{B_{1}(0^{n})}|(u_{x,r_{i}})_{z_{i},r}(y)-h_{i}(y)|dy\leq\delta
\end{equation*}
for any $r\in(0,1)$. Since $u_{x,r_{i}}$ converge to $\varphi$ in $L_{loc}^{1}(\mathbf{R}^{n})$, by Lemma \ref{tangential p-flow stable lemma}, after passing to a subsequence, we can assume that $\lim_{i\rightarrow\infty}z_{i}=z$ and $h_{i}$ converge to $h$ in $L_{loc}^{1}(B_{2}(0^{n}))$. For any $r\in(0,1)$, by Lemma \ref{tangential p-flow stable lemma}, we have
\begin{equation*}
\begin{split}
& \int_{B_{1}(0^{n})}|\varphi_{z,r}(y)-h(y)|dy\\
\leq & \lim_{i\rightarrow\infty}\int_{B_{1}(0^{n})}|\varphi_{z,r}(y)-(u_{x,r_{i}})_{z_{i},r}(y)|dy+\lim_{i\rightarrow\infty}\int_{B_{1}(0^{n})}|(u_{x,r_{i}})_{z_{i},r}(y)-h_{i}(y)|dy\\
&+\lim_{i\rightarrow\infty}\int_{B_{1}(0^{n})}|h_{i}(y)-h(y)|dy\\
\leq & ~ \delta
\end{split}
\end{equation*}
which implies $z\in F_{\delta,1}(\varphi)\setminus B_{2\tau}(V_{\varphi}^{k})$. However, by Lemma \ref{cone-splitting lemma} and $x\in S_{\epsilon}^{k}(u)$, we get $F_{\delta,1}(\varphi)\subset B_{\tau}(V_{\varphi}^{k})$, which is a contradiction.
\end{proof}

\begin{lemma}\label{Rectifiability lemma 2}
For any $r\leq r_{x}$, we have
\begin{equation*}
F_{\delta,r}(u)\cap B_{r}(x)\subset B_{2\tau r}(V_{\varphi}^{k}+x)
\end{equation*}
\end{lemma}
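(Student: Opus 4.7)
The plan is to reduce Lemma \ref{Rectifiability lemma 2} directly to Lemma \ref{Rectifiability lemma 1} by rescaling. Concretely, given $y \in F_{\delta,r}(u)\cap B_r(x)$, I would set $z=(y-x)/r\in B_1(0^n)$ and argue that $z\in F_{\delta,1}(u_{x,r})$. Once this is established, Lemma \ref{Rectifiability lemma 1} places $z$ in $B_{2\tau}(V_\varphi^k)$, and unrolling the substitution then yields $y\in B_{2\tau r}(V_\varphi^k+x)$, using linearity of $V_\varphi^k$ to write $rv\in V_\varphi^k$ for any $v\in V_\varphi^k$ with $|z-v|<2\tau$.

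The core analytic content is therefore the composition identity
\begin{equation*}
(u_{x,r})_{z,s}=u_{x+rz,\, rs}\qquad\text{for all admissible }s\in(0,1),
\end{equation*}
which I would verify case by case from Definition \ref{definition of tangential p-flow}. For $p>2$ it is immediate from $(u_{x,r})_{z,s}(w)=s^{p-2}r^{p-2}u(x+rz+rsw)$. For $p=2$ the identity still holds after checking that the centering constants telescope: $M(u_{x,r},z,s)=M(u,x+rz,rs)-M(u,x,r)$, so the additive constants in $(u_{x,r})_{z,s}(w)=u(x+rz+rsw)-M(u,x,r)-M(u_{x,r},z,s)$ and in $u_{x+rz,rs}(w)=u(x+rz+rsw)-M(u,x+rz,rs)$ cancel exactly.

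With the identity in hand, the rest is formal. Since $y\in F_{\delta,r}(u)$, for every $s\in(0,1)$ we have $rs\in(0,r)$, hence there is a $0$-homogeneous $h_s$ with $\|u_{y,rs}-h_s\|_{L^1(B_1(0^n))}<\delta$. The composition identity gives $u_{y,rs}=(u_{x,r})_{z,s}$, so
\begin{equation*}
\|(u_{x,r})_{z,s}-h_s\|_{L^1(B_1(0^n))}<\delta,
\end{equation*}
for every $s\in(0,1)$; this is exactly the statement $z\in F_{\delta,1}(u_{x,r})$. Because $r\leq r_x$, Lemma \ref{Rectifiability lemma 1} yields $z\in B_{2\tau}(V_\varphi^k)$, and rescaling back shows $y\in B_{2\tau r}(V_\varphi^k+x)$, as claimed.

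The only real obstacle I foresee is the $p=2$ case of the composition identity, since the flow there subtracts a supremum rather than a power of $r$. I expect the telescoping computation above to dispose of it cleanly, but one must be careful that $h_s$ remains in the class of $0$-homogeneous functions after these manipulations — which it does, since the comparison is with the same $h_s$ on both sides of the identity. No deeper PDE input is needed: the lemma is essentially a bookkeeping statement about how the $(0,\delta,\cdot,\cdot)$-homogeneity condition transfers under $p$-flow rescaling.
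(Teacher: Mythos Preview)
Your proposal is correct and follows essentially the same approach as the paper: write $y=x+rz$, use the composition identity $(u_{x,r})_{z,s}=u_{x+rz,rs}$ to deduce $z\in F_{\delta,1}(u_{x,r})$, and then invoke Lemma~\ref{Rectifiability lemma 1}. You are simply more explicit than the paper about verifying the composition identity in the $p=2$ case and about the final rescaling step, but the argument is the same.
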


\begin{proof}
For any $x+rz\in F_{\delta,r}(u)\cap B_{r}(x)$, where $z\in B_{1}(0^{n})$, there exists homogeneous function $h$ such that for any $s\in(0,r)$, we have
\begin{equation*}
\int_{B_{1}(0^{n})}|u_{x+rz,s}(y)-h(y)|dy\leq\delta
\end{equation*}
It then follows that
\begin{equation*}
\int_{B_{1}(0^{n})}|(u_{x,r})_{z,\frac{s}{r}}(y)-h(y)|dy\leq\delta,
\end{equation*}
which implies $z\in F_{\delta,1}(u_{x,r})$. Combining this with Lemma \ref{Rectifiability lemma 1}, we have $x+rz\in B_{2\tau r}(V_{\varphi}^{k}+x)$.
\end{proof}

Now, we are in a position to prove Theorem \ref{Rectifiability theorem}.

\begin{proof}[Proof of Theorem \ref{Rectifiability theorem}]
For any $\eta>0$ and $x\in\left(F_{\delta,\eta}(u)\cap \mathcal{S}_{\epsilon}^{k}(u)\right)\setminus\mathcal{S}^{k-1}(u)$, by Lemma \ref{Rectifiability lemma 2}, there exists $r_{x}\leq\eta$ such that for any $r<r_{x}$, we have $F_{\delta,r}(u)\cap B_{r}(x)\subset B_{2\tau r}(V_{\varphi}^{k}+x)$, which implies
\begin{equation*}
\left(\left(F_{\delta,\eta}(u)\cap \mathcal{S}_{\epsilon}^{k}(u)\right)\setminus\mathcal{S}^{k-1}(u)\right)\cap B_{r}(x)\subset B_{2\tau r}(V_{\varphi}^{k}+x).
\end{equation*}
Hence, $\left(F_{\delta,\eta}(u)\cap \mathcal{S}_{\epsilon}^{k}(u)\right)\setminus\mathcal{S}^{k-1}(u)$ is $k$-rectifiable (see e.g. \cite[p.61, Lemma 1]{Si}). Since uniqueness of tangents holds for $F$, we have $\mathcal{S}_{\epsilon}^{k}(u)=\cup_{\eta}(F_{\delta,\eta}(u)\cap \mathcal{S}_{\epsilon}^{k}(u))$. By (\ref{quantitative stratification}), it then follows that
\begin{equation*}
\begin{split}
\mathcal{S}^{k}(u)\setminus\mathcal{S}^{k-1}(u)
& =  \bigcup_{\epsilon}\left(\mathcal{S}_{\epsilon}^{k}(u)\setminus\mathcal{S}^{k-1}(u)\right)\\
& =  \bigcup_{\epsilon}\bigcup_{\eta}\left(\left(F_{\delta,\eta}(u)\cap \mathcal{S}_{\epsilon}^{k}(u)\right)\setminus\mathcal{S}^{k-1}(u)\right),
\end{split}
\end{equation*}
which implies $\mathcal{S}^{k}(u)\setminus\mathcal{S}^{k-1}(u)$ is $k$-rectifiable. On the other hand, since uniqueness of tangents holds for $F$ implies homogeneity of tangents holds for $F$, by (3) of Theorem \ref{first main result}, we have $\text{Haus}^{k}(\mathcal{S}^{k-1}(u))=0$. It then follows that $\mathcal{S}^{k-1}(u)$ is $k$-rectifiable. Hence, $\mathcal{S}^{k}(u)=\left(\mathcal{S}^{k}(u)\setminus\mathcal{S}^{k-1}(u)\right)\cup\mathcal{S}^{k-1}(u)$ is $k$-rectifiable.
\end{proof}

\section{$F$-subharmonic functions}
In this section, we consider the singular sets of $F$-subharmonic functions and give the proof of Theorem \ref{estimate of singular set}. We assume that strong uniqueness of tangents holds for $F$ and $p>2$, where $p$ is the Riesz characteristic of $F$. By \cite[Proposition 7.1, (12.3)]{HL1}, all density functions are equivalent, i.e., $\Theta^{M}=\Theta^{S}=\frac{n-p+2}{n}\Theta^{V}$. For convenience, if $u$ is a $F$-subharmonic function on $B_{2}(0^{n})$, we use $E_{c}(u)$ to denote the set $\{x\in B_{2}(0^{n})~|~\Theta^{V}(u,x)\geq c\}$ in this section.

\subsection{Monotonicity condition and $F$-energy}
In this subsection, we introduce the monotonicity condition and $F$-energies of $F$-subharmonic functions. And then we prove every $F$-subharmonic function satisfies monotonicity condition after subtracting a constant.
\begin{definition}
Let $u$ be a $F$-subharmonic function on $B_{2}(0^{n})$. We say that $u$ satisfies monotonicity condition if $F$-energy defined by
\begin{equation*}
\theta_{F}(u,x,r):=\frac{S(u,x,r)}{K_{p}(r)}+\frac{M(u,x,r)}{K_{p}(r)}
\end{equation*}
is nondecreasing in $r\in(0,\frac{1}{2})$ for any $x\in B_{1}(0^{n})$. And we define $\theta_F(u,x,0)=\lim_{r\rightarrow0}\theta_F(u,x,r)$
\end{definition}

\begin{lemma}\label{monotonicity condition lemma}
Let $u$ be a $F$-subharmonic function on $B_{2}(0^{n})$ with $\|u\|_{L^{1}(B_{2}(0^{n}))}\leq\Lambda$. Then there exists constant $N(\Lambda,p,n)$ such that $u-N$ satisfies monotonicity condition.
\end{lemma}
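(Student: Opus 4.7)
The plan is to separate $\theta_F$ into its two terms and show each of $\frac{S(u-N,x,r)}{K_p(r)}$ and $\frac{M(u-N,x,r)}{K_p(r)}$ is separately nondecreasing on $(0,1/2)$ for all $x\in B_1(0^n)$. A direct differentiation gives the equivalence: $\frac{S(u,x,r)-N}{K_p(r)}$ is nondecreasing if and only if $N\geq S(u,x,r)-\frac{S'_-(u,x,r)K_p(r)}{K'_p(r)}$. Using $K_p(r)=-\frac{r^{2-p}}{p-2}$ and $K'_p(r)=r^{1-p}$ for $p>2$, the quotient $-K_p/K'_p$ equals $r/(p-2)$, so this becomes $N\geq T_S(u,x,r):=S(u,x,r)+\frac{rS'_-(u,x,r)}{p-2}$. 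The analogous computation for the max term gives the condition $N\geq T_M(u,x,r):=M(u,x,r)+\frac{rM'_-(u,x,r)}{p-2}$. Thus it is enough to choose $N=N(\Lambda,p,n)$ larger than the supremum of $T_S$ and $T_M$ over $x\in B_1(0^n)$ and $r\in(0,1/2)$.

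The next step is to reduce this supremum to an evaluation at $r=1/2$ by showing $T_S$ and $T_M$ are themselves nondecreasing in $r$. Formally, $\partial_r T_S=\frac{p-1}{p-2}S'_-+\frac{rS''_-}{p-2}$, and the Harvey--Lawson monotonicity $\partial_r(S'_-/K'_p)\geq0$ (\cite[Theorem 6.4]{HL1}, already cited in the proof of part (1) of Theorem \ref{first main result}) translates, using $K''_p/K'_p=-(p-1)/r$, into $rS''_-\geq -(p-1)S'_-$. Substituting gives $\partial_r T_S\geq 0$, so $T_S(u,x,r)\leq T_S(u,x,1/2)$ on $(0,1/2)$. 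The identical argument with the corresponding Harvey--Lawson monotonicity for $M'_-/K'_p$ yields $T_M(u,x,r)\leq T_M(u,x,1/2)$.

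It remains to bound $T_S(u,x,1/2)$ and $T_M(u,x,1/2)$ uniformly by $C(n,p)\Lambda$. Since $u$ is subharmonic and $\|u\|_{L^1(B_2)}\leq\Lambda$, the sub-mean-value inequality applied at scale $1/2$ gives $u\leq C(n)\Lambda$ on $B_{3/2}$, whence $S(u,x,1/2)\leq M(u,x,1/2)\leq C(n)\Lambda$ for $x\in B_1(0^n)$. For the derivative terms, write $S'_-(u,x,r)=\mu(B_r(x))/(n\omega_n r^{n-1})$ where $\mu=\Delta u\geq0$, and observe that pairing $\mu$ against a smooth cutoff supported in $B_2$ that equals $1$ on $B_{3/2}$ gives $\mu(B_{3/2})\leq C(n)\Lambda$ by duality. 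At $r=1/2$ this yields $S'_-(u,x,1/2)\leq C(n)\Lambda$, so that $T_S(u,x,1/2)\leq C(n,p)\Lambda$; then $M'_-(u,x,1/2)$ is controlled by $M'_-(u,x,1/2)/K'_p(1/2)\leq(M(u,x,1)-M(u,x,1/2))/(K_p(1)-K_p(1/2))\leq C(n,p)\Lambda$ using monotonicity of $M'_-/K'_p$ together with the sub-mean-value bound on $M(u,x,1)$. Taking $N=C(n,p)\Lambda+1$ then makes both pointwise inequalities strict, and monotonicity of $\theta_F(u-N,\cdot,\cdot)$ follows.

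The main obstacle is the $M$-side: unlike $S$, the quantity $M$ is a supremum rather than an average, so pulling its one-sided derivative $M'_-$ into the Harvey--Lawson monotonicity machinery and extracting an $L^\infty$ bound at scale $r=1/2$ from the $L^1$-bound on $u$ requires invoking the HL monotonicity of $M'_-/K'_p$ and running the comparison estimate on $[1/2,1]$; the $S$-side, by contrast, is entirely mechanical once one knows the Harvey--Lawson monotonicity of $S'_-/K'_p$ and the duality bound on the Riesz mass $\mu(B_{3/2})$.
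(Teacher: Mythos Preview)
Your approach is essentially the paper's: both reduce to showing that $N$ dominates the tangent-line intercept $S(u,x,\tfrac12)-\frac{S'_+(u,x,1/2)}{K'_p(1/2)}K_p(\tfrac12)$ (and the analogous $M$-quantity), and both use $K_p$-convexity to propagate this single-scale bound down to all $r\in(0,\tfrac12)$; the paper phrases the latter step abstractly (``by the property of $K_p$-convex functions'') where you unpack it into the explicit threshold $T_S$ and a formal second-derivative computation. One small slip: your sub-mean-value bound on $M(u,x,1)$ is not uniform over $x\in B_1(0^n)$, since points of $\overline{B_1(x)}$ can approach $\partial B_2(0^n)$ where no fixed-radius mean-value ball is available---the paper sidesteps this by running the difference-quotient comparison on $[\tfrac12,\tfrac23]$ rather than $[\tfrac12,1]$.
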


\begin{proof}
For any $x\in B_{1}(0^{n})$, since $S(u,x,\cdot)$ is $K_{p}$-convex (see \cite[p.31]{HL1}), by Lemma \ref{p-plane lemma}, we have
\begin{equation*}
\frac{S_{+}'(u,x,\frac{1}{2})}{K_{p}'(\frac{1}{2})}\leq\frac{S(u,x,\frac{2}{3})-S(u,x,\frac{1}{2})}{K_{p}(\frac{2}{3})-K_{p}(\frac{1}{2})}\leq\tilde{N}(\Lambda,p,n).
\end{equation*}
By the property of subharmonic functions, there exists constant $N(\Lambda,p,N)$ such that
\begin{equation*}
S(u,x,\frac{1}{2})-\tilde{N}(\Lambda,p,n)K_{p}(\frac{1}{2})\leq N(\Lambda,p,n).
\end{equation*}
Hence, by the property of $K_{p}$-convex function, we obtain
\begin{equation*}
\frac{S(u-N,x,r)}{K_{p}(r)}=\frac{S(u,x,r)-N}{K_{p}(r)-0}
\end{equation*}
is nondecreasing in $r\in(0,\frac{1}{2})$. Similarly, by increasing the value of $N$, we can prove $\frac{M(u-N,x,r)}{K_{p}(r)}$ is also nondecreasing. And this completes the proof.
\end{proof}

\subsection{Quantitative rigidity results}
In this subsection, we prove some quantitative rigidity results of $F$-subharmonic functions.
\begin{lemma}\label{singularity preserve lemma}
Let $u_{i}$ and $u$ be $F$-subharmonic functions on $B_{2}(0^{n})$. For $c>0$, if $u_{i}$ converge to $u$ in $L_{loc}^{1}(B_{2}(0^{n}))$ and $x_{i}$ converge to $x$, where $x_{i}\in E_{c}(u_{i})\cap\overline{B_{1}(0^{n})}$, then
\begin{equation*}
x\in E_{c}(u)\cap\overline{B_{1}(0^{n})}.
\end{equation*}
\end{lemma}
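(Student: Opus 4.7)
The statement asserts upper semi-continuity of the density $\Theta^{V}$ along joint limits in $(u,x)$, together with the (trivial) closedness of $\overline{B_{1}(0^{n})}$. The plan is to fix a scale $r>0$, use monotonicity of a normalized volume average to bound $\Theta^{V}(u_{i},x_{i})$ from below by a quantity that depends continuously on $(u_{i},x_{i})$, then send $i\to\infty$ and $r\to 0$ in that order.

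First I would establish the volume-average analogue of Lemma \ref{monotonicity condition lemma}: by \cite[Corollary 5.3, Theorem 6.4]{HL1} the map $r\mapsto V(u,x,r)$ is $K_{p}$-convex, so after subtracting a constant $N$ that depends only on a local $L^{1}$ bound for $u$, the ratio $V(u-N,x,r)/K_{p}(r)$ is monotone in $r\in(0,\tfrac{1}{2})$ for every $x\in B_{1}(0^{n})$. Because $u_{i}\to u$ in $L^{1}_{loc}(B_{2}(0^{n}))$, the norms $\|u_{i}\|_{L^{1}(B_{3/2}(0^{n}))}$ are uniformly bounded, so a single constant $N$ works for all sufficiently large $i$ and for $u$ as well. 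Writing $\Theta^{V}(u,x)=\lim_{r\to 0}V(u,x,r)/K_{p}(r)$, the monotonicity gives, for every $r\in(0,\tfrac{1}{2})$,
\begin{equation*}
\frac{V(u_{i}-N,x_{i},r)}{K_{p}(r)}\;\geq\;\Theta^{V}(u_{i}-N,x_{i})\;=\;\Theta^{V}(u_{i},x_{i})\;\geq\;c,
\end{equation*}
where the middle equality uses that $K_{p}(r)\to-\infty$ as $r\to 0$ (since $p>2$), so $N/K_{p}(r)\to 0$ and density is invariant under subtraction of a constant.

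Next I would pass $i\to\infty$ at fixed $r$. Rewriting $V(u_{i},x_{i},r)=(\omega_{n}r^{n})^{-1}\int_{B_{r}(x_{i})}u_{i}(z)\,dz$ and inserting $V(u,x_{i},r)$ gives
\begin{equation*}
|V(u_{i},x_{i},r)-V(u,x,r)|\;\leq\;\frac{\|u_{i}-u\|_{L^{1}(B_{r+1}(0^{n}))}}{\omega_{n}r^{n}}\;+\;|V(u,x_{i},r)-V(u,x,r)|.
\end{equation*}
The first term vanishes by $L^{1}_{loc}$ convergence and the second by continuity of translation in $L^{1}_{loc}$ applied to the fixed function $u$. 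Hence $V(u_{i},x_{i},r)\to V(u,x,r)$, and the previous display passes to the limit to yield $V(u-N,x,r)/K_{p}(r)\geq c$ for every $r\in(0,\tfrac{1}{2})$.

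Finally, letting $r\to 0$ and using the existence of the monotone limit $\Theta^{V}(u-N,x)=\Theta^{V}(u,x)$, I conclude $\Theta^{V}(u,x)\geq c$, so $x\in E_{c}(u)\cap\overline{B_{1}(0^{n})}$. The one point that requires care is the uniformity of $N$ along the sequence; this is why I shrink from $B_{2}(0^{n})$ to $B_{3/2}(0^{n})$, where $L^{1}_{loc}$ convergence provides the uniform $L^{1}$ control needed to apply the monotonicity with a single constant.
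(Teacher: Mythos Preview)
Your argument is correct, but the paper's proof is more direct and avoids the auxiliary constant $N$ entirely. Instead of normalizing so that the single-scale ratio $V(u_{i}-N,x_{i},r)/K_{p}(r)$ becomes monotone, the paper works with the two-scale secant slope
\[
\frac{V(u_{i},x_{i},r)-V(u_{i},x_{i},s)}{K_{p}(r)-K_{p}(s)},\qquad 0<s<r<\tfrac{1}{2},
\]
which, by $K_{p}$-convexity of $r\mapsto V(u_{i},x_{i},r)$, is automatically bounded below by $\Theta^{V}(u_{i},x_{i})\geq c$ without any subtraction. The convergence step $V(u_{i},x_{i},t)\to V(u,x,t)$ is established exactly as you do (splitting into an $L^{1}$-convergence term and a translation-continuity term), and then one simply passes to the limit in the secant slope and invokes the definition of $\Theta^{V}$.

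What your route buys is a single monotone quantity rather than a two-parameter family, at the cost of having to justify the uniform choice of $N$ along the sequence and an extra appeal to (the $V$-analogue of) Lemma~\ref{monotonicity condition lemma}. The paper's route buys a shorter proof with no uniformity issue, since secant slopes of a $K_{p}$-convex function dominate the density directly.
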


\begin{proof}
For any $t>0$, we compute
\begin{equation*}
\begin{split}
&~~~~~~|V(u,x,t)-V(u_{i},x_{i},t)|\\
& \leq \frac{1}{\omega_{n}t^{n}}\int_{B_{t}(x_{i})}|u(y)-u_{i}(y)|dy+\frac{1}{\omega_{n}t^{n}}|\int_{B_{t}(x)}u(y)dy-\int_{B_{t}(x_{i})}u(y)dy|\\
& \leq\frac{1}{\omega_{n}t^{n}}\int_{B_{1+t}(0^{n})}|u(y)-u_{i}(y)|dy+\frac{1}{\omega_{n}t^{n}}|\int_{B_{t}(x)}u(y)dy-\int_{B_{t}(x_{i})}u(y)dy|.
\end{split}
\end{equation*}
which implies
\begin{equation*}
\lim_{i\rightarrow\infty}V(u_{i},x_{i},t)=V(u,x,t).
\end{equation*}
Therefore, for any $0<s<r<\frac{1}{2}$, we obtain
\begin{equation*}
\frac{V(u,x,r)-V(u,x,s)}{K_{p}(r)-K_{p}(s)}=\lim_{i\rightarrow\infty}\frac{V(u_{i},x_{i},r)-V(u_{i},x_{i},s)}{K_{p}(r)-K_{p}(s)}\geq c,
\end{equation*}
where we used the condition $x_{i}\in E_{c}(u_{i})\cap\overline{B_{1}(0^{n})}$. By the definition of density function $\Theta^{V}$ (see Corollary 5.3 in \cite{HL1}), we obtain $\Theta(u,x)\geq c$. This completes the proof.
\end{proof}

\begin{lemma}\label{rigidity for F-subharmonic}
Let $u$ be a $F$-subharmonic function on $B_{2}(0^{n})$ with $\|u\|_{L^{1}(B_{2}(0^{n}))}\leq\Lambda$ and satisfies monotonicity condition. For any $\epsilon>0$, there exists constant $\delta_{0}(\epsilon,c,\Lambda,F)$ such that if
\begin{equation*}
\theta_{F}(u,0^{n},\frac{1}{2})-\theta_{F}(u,0^{n},\delta_{0})<\delta_{0},
\end{equation*}
then $u$ is $(0,\epsilon,1,0^{n})$-homogeneous.
\end{lemma}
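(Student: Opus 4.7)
The plan is a contradiction-and-compactness argument. Assume the conclusion fails; then there exist $\epsilon>0$ and a sequence of $F$-subharmonic functions $u_{i}:B_{2}(0^{n})\to\mathbf{R}$ with $\|u_{i}\|_{L^{1}(B_{2}(0^{n}))}\leq\Lambda$, each satisfying the monotonicity condition, and a sequence $\delta_{i}\to 0$ with $\theta_{F}(u_{i},0^{n},\frac{1}{2})-\theta_{F}(u_{i},0^{n},\delta_{i})<\delta_{i}$, such that no $u_{i}$ is $(0,\epsilon,1,0^{n})$-homogeneous. By Lemma \ref{F subharmonic converge lemma}, pass to a subsequence for which $u_{i}\to u_{\infty}$ in $L^{1}_{loc}(B_{2}(0^{n}))$, with $u_{\infty}$ $F$-subharmonic. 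Since $p>2$ one has $(u_{i})_{0^{n},1}\equiv u_{i}$, so it suffices to produce a $0$-homogeneous $h$ with $\|u_{i}-h\|_{L^{1}(B_{1}(0^{n}))}<\epsilon$ for all large $i$; the natural candidate is $h=u_{\infty}$, and the bulk of the argument is to show that $u_{\infty}$ is $0$-homogeneous at $0^{n}$.

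To pass the hypothesis to the limit, first show that $S(u_{i},0^{n},r)\to S(u_{\infty},0^{n},r)$ and $M(u_{i},0^{n},r)\to M(u_{\infty},0^{n},r)$ for almost every $r\in(0,\frac{1}{2})$. The first is standard: $L^{1}_{loc}$ convergence of subharmonic functions combined with Fubini yields convergence of the spherical means at a.e.\ radius. The second uses the identity $M(u,0^{n},r)=\sup_{|y|\leq r}u(y)$ for subharmonic $u$, together with the usual upper-semicontinuity properties of $L^{1}_{loc}$ limits of subharmonic sequences. Combined with the monotonicity of $\theta_{F}(u_{i},0^{n},\cdot)$, for any $0<a<b\leq\frac{1}{2}$ and $i$ large enough that $\delta_{i}<a$ one obtains $\theta_{F}(u_{i},0^{n},b)-\theta_{F}(u_{i},0^{n},a)<\delta_{i}$; passing to the limit, $\theta_{F}(u_{\infty},0^{n},\cdot)$ is constant on $(0,\frac{1}{2})$.

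The heart of the argument is the rigidity step: constancy of $\theta_{F}(u_{\infty},0^{n},\cdot)$ on $(0,\frac{1}{2})$ forces $u_{\infty}$ to be $0$-homogeneous at $0^{n}$. Applying Lemma \ref{monotonicity condition lemma} to $u_{\infty}$ gives a uniform $N=N(\Lambda,p,n)$ such that both $S(u_{\infty}-N,0^{n},\cdot)/K_{p}$ and $M(u_{\infty}-N,0^{n},\cdot)/K_{p}$ are individually nondecreasing on $(0,\frac{1}{2})$; since their sum equals $\theta_{F}(u_{\infty},0^{n},\cdot)-2N/K_{p}(\cdot)$ and the first term is constant while the explicit correction $-2N/K_{p}$ is strictly monotone, the two nondecreasing quotients are rigidly pinned down, so both $S(u_{\infty},0^{n},r)/K_{p}(r)$ and $M(u_{\infty},0^{n},r)/K_{p}(r)$ are themselves constant on $(0,\frac{1}{2})$, equal to the density $\Theta=\Theta^{V}(u_{\infty},0^{n})$. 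By strong uniqueness of tangents, the unique tangent to $u_{\infty}$ at $0^{n}$ is $\Theta K_{p}(|\cdot|)$, and the saturation of the monotone quotient all the way up to $r=\frac{1}{2}$ propagates the identification $u_{\infty}(y)=\Theta K_{p}(|y|)$ outward from the origin to all of $B_{1/2}(0^{n})$; hence $u_{\infty}$ is $0$-homogeneous at $0^{n}$ in the sense of Definition \ref{homogeneous definition}.

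With rigidity in hand, the proof concludes: $u_{i}\to u_{\infty}$ in $L^{1}(B_{1}(0^{n}))$ and $u_{\infty}$ is $0$-homogeneous, so for all sufficiently large $i$ the competitor $h=u_{\infty}$ witnesses $(0,\epsilon,1,0^{n})$-homogeneity of $u_{i}$, contradicting the choice of $u_{i}$. The main obstacle is the rigidity step: the compactness and limit-passage are standard from Section 2 and from subharmonic analysis, but converting the scalar condition ``$\theta_{F}(u_{\infty},0^{n},\cdot)$ is constant on $(0,\frac{1}{2})$'' into a genuine pointwise $0$-homogeneity of $u_{\infty}$ on the whole ball $B_{1/2}(0^{n})$ is exactly where strong uniqueness of tangents and the full monotonicity structure of the $F$-energy are essential.
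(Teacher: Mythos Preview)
Your overall strategy---contradiction, compactness via Lemma~\ref{F subharmonic converge lemma}, convergence of $S$ and $M$ to the limit, and then a rigidity step identifying $u_{\infty}$---is the same as the paper's. The gap is in the rigidity step.

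You apply Lemma~\ref{monotonicity condition lemma} to the \emph{limit} $u_{\infty}$ to produce a constant $N$ making both $S(u_{\infty}-N,0^{n},\cdot)/K_{p}$ and $M(u_{\infty}-N,0^{n},\cdot)/K_{p}$ nondecreasing, and then assert that because their sum equals $\theta_{F}(u_{\infty},0^{n},\cdot)-2N/K_{p}(\cdot)$ the two pieces are ``rigidly pinned down''. This is false: you have shown $\theta_{F}(u_{\infty},0^{n},\cdot)$ is constant, but $-2N/K_{p}(\cdot)$ is strictly monotone in $r$, so the sum of your two nondecreasing functions is strictly monotone, not constant. Two nondecreasing functions with a strictly monotone sum are in no way determined individually; nothing forces $S(u_{\infty},0^{n},\cdot)/K_{p}$ or $M(u_{\infty},0^{n},\cdot)/K_{p}$ to be constant by this route. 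Subtracting $N$ at the level of the limit destroys exactly the information you need.

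The paper avoids this by using that $u_{\infty}$ \emph{itself} inherits the monotonicity condition from the $u_{i}$ (via Lemma~\ref{M S stable lemma}), so no new constant is subtracted. One then has $\theta_{F}(u_{\infty},0^{n},\cdot)$ constant on $(0,\tfrac{1}{2})$, i.e.\ $S(u_{\infty},0^{n},\cdot)+M(u_{\infty},0^{n},\cdot)$ is affine in $K_{p}$. Since $S(u_{\infty},0^{n},\cdot)$ and $M(u_{\infty},0^{n},\cdot)$ are each $K_{p}$-convex and their sum is affine in $K_{p}$, each is individually affine in $K_{p}$; the inherited monotonicity of each quotient then kills the additive constants, yielding $S(u_{\infty},0^{n},r)=\Theta^{S}K_{p}(r)$ and $M(u_{\infty},0^{n},r)=\Theta^{M}K_{p}(r)$. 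Strong uniqueness gives $\Theta^{S}=\Theta^{M}$, so on every sphere the supremum equals the spherical mean, forcing $u_{\infty}(x)=\Theta^{S}K_{p}(|x|)$ on $B_{1/2}(0^{n})$. That is the $0$-homogeneous competitor, and the contradiction with ``$u_{i}$ not $(0,\epsilon,1,0^{n})$-homogeneous'' follows exactly as you say.
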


\begin{proof}
We argue by contradiction. Assuming that there exists a sequence of $F$-subharmonic function $u_{i}$ on $B_{2}(0^{n})$ such that
\begin{enumerate}[~~~~~~(1)]
    \item $\|u_{i}\|_{L^{1}(B_{2}(0^{n}))}\leq\Lambda$;
    \item $u_{i}$ satisfies monotonicity condition;
    \item $\theta_{F}(u_{i},0^{n},\frac{1}{2})-\theta_{F}(u_{i},0^{n},i^{-1})<i^{-1}$;
    \item $u_{i}$ is not $(0,\epsilon,1,0^{n})$-homogeneous.
\end{enumerate}
By Lemma \ref{F subharmonic converge lemma}, after passing to a subsequence, we can assume $u_{i}$ converge to $u$ in $L_{loc}^{1}(B_{2}(0^{n}))$, where $u$ is a $F$-subharmonic function. By Lemma \ref{M S stable lemma}, it is clear that $u$ also satisfies monotonicity condition. For each $t\in(0,\frac{1}{2})$, we obtain
\begin{equation*}
\begin{split}
\theta_{F}(u,0^{n},\frac{1}{2})-\theta_{F}(u,0^{n},t) &=\frac{S(u,0^{n},\frac{1}{2})}{K_{p}(\frac{1}{2})}-\frac{S(u,0^{n},t)}{K_{p}(t)}+\frac{M(u,0^{n},\frac{1}{2})}{K_{p}(\frac{1}{2})}-\frac{M(u,0^{n},t)}{K_{p}(t)}\\
&=\lim_{i\rightarrow\infty}\left(\theta_{F}(u_{i},0^{n},\frac{1}{2})-\theta_{F}(u_{i},0^{n},t)\right)\\
&\leq0,
\end{split}
\end{equation*}
which implies
\begin{equation*}
S(u,0^{n},r)=\Theta^{S}(u,0^{n})K_{p}(r)~~\text{and}~~M(u,0^{n},r)=\Theta^{M}(u,0^{n})K_{p}(r)
\end{equation*}
for any $r\in(0,\frac{1}{2})$, where $\Theta^{S}$ and $\Theta^{M}$ are $S$-density and $M$-density (see Section 6 of \cite{HL1}). Since strong uniqueness holds for $u$, then $\Theta^{S}=\Theta^{M}$ (see \cite[(12.3)]{HL1}). By the definitions of $S$ and $M$, we obtain
\begin{equation*}
u(x)=\Theta^{S}(u,0^{n})K_{p}(|x|).
\end{equation*}
Therefore, $u$ is $0$-homogenous. However, $u_{i}$ converge to $u$ in $L^{1}_{loc}(B_{2}(0^{n}))$. Thus, $u_{i}$ are $(0,\epsilon,1,0^{n})$-homogenous when $i$ is sufficiently large, which is a contradiction.
\end{proof}

\begin{lemma}\label{no singular point lemma}
Let $u$ be a $F$-subharmonic function on $B_{2}(0^{n})$ with $\|u\|_{L^{1}(B_{2}(0^{n}))}\leq\Lambda$. For any $c>0$, there exists constant $\epsilon(c,\Lambda,F)$ such that if $u$ is $(0,\epsilon,1,0^{n})$-homogenous, then
\begin{equation*}
E_{c}(u)\cap A_{\frac{1}{16},\frac{1}{2}}(0^{n})=\emptyset,
\end{equation*}
where $A_{\frac{1}{16},\frac{1}{2}}=\{x\in \mathbf{R}^{n}~|~\frac{1}{16}\leq|x|\leq\frac{1}{2}\}$.
\end{lemma}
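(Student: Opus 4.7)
The plan is to argue by contradiction via a compactness argument, using strong uniqueness of tangents to rigidify the approximately $0$-homogeneous limit as an exact multiple of the Riesz kernel on $B_{1}(0^{n})$.

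Suppose the conclusion fails for some $c>0$: there exist $\epsilon_{i}\to 0$, $F$-subharmonic functions $u_{i}$ on $B_{2}(0^{n})$ with $\|u_{i}\|_{L^{1}(B_{2}(0^{n}))}\leq\Lambda$, and points $x_{i}\in E_{c}(u_{i})\cap A_{\frac{1}{16},\frac{1}{2}}(0^{n})$ such that each $u_{i}$ is $(0,\epsilon_{i},1,0^{n})$-homogeneous. By Lemma \ref{F subharmonic converge lemma}, after passing to a subsequence $u_{i}\to u$ in $L^{1}_{loc}(B_{2}(0^{n}))$ for some $F$-subharmonic $u$, and we may assume $x_{i}\to x\in \overline{A_{\frac{1}{16},\frac{1}{2}}(0^{n})}$. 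By $(0,\epsilon_{i},1,0^{n})$-homogeneity, noting $(u_{i})_{0^{n},1}=u_{i}$ since $p>2$, there are $0$-homogeneous functions $h_{i}$ with $\|u_{i}-h_{i}\|_{L^{1}(B_{1}(0^{n}))}<\epsilon_{i}$, hence $h_{i}\to u$ in $L^{1}(B_{1}(0^{n}))$. Writing $h_{i}(y)=|y|^{2-p}g_{i}(y/|y|)$, the scaling identity $\|h_{i}-h_{j}\|_{L^{1}(B_{R}(0^{n}))}=R^{n+2-p}\|h_{i}-h_{j}\|_{L^{1}(B_{1}(0^{n}))}$ (assuming $p<n+2$, which holds in this setting) shows $\{h_{i}\}$ is Cauchy in $L^{1}_{loc}(\mathbf{R}^{n})$, so $h_{i}\to h$ in $L^{1}_{loc}(\mathbf{R}^{n})$ for some $0$-homogeneous $h$ coinciding with $u$ on $B_{1}(0^{n})$.

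Next, I identify the tangent of $u$ at $0^{n}$. For each $R>0$ and every $r<1/R$, one has $ry\in B_{1}(0^{n})$ for $y\in B_{R}(0^{n})$, so using $u=h$ on $B_{1}(0^{n})$ and the $0$-homogeneity of $h$,
\begin{equation*}
u_{0^{n},r}(y)=r^{p-2}u(ry)=r^{p-2}h(ry)=h(y).
\end{equation*}
Thus $u_{0^{n},r}\to h$ in $L^{1}_{loc}(\mathbf{R}^{n})$ as $r\to 0$, giving $h\in T_{0^{n}}(u)$. By strong uniqueness of tangents, $h=\Theta K_{p}(|\cdot|)$ for some $\Theta\geq 0$, so $u=\Theta K_{p}(|\cdot|)$ on $B_{1}(0^{n})$. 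On the other hand, Lemma \ref{singularity preserve lemma} gives $x\in E_{c}(u)$, so $\Theta^{V}(u,x)\geq c>0$. But $|x|\geq 1/16>0$ means $u$ is smooth in a neighborhood of $x$; since $p>2$, the tangential $p$-flow
\begin{equation*}
u_{x,r}(y)=r^{p-2}\Theta K_{p}(|x+ry|)
\end{equation*}
converges to $0$ uniformly on compact sets as $r\to 0$, so the unique tangent of $u$ at $x$ is zero and $\Theta^{V}(u,x)=0$, contradicting $\Theta^{V}(u,x)\geq c$.

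The main obstacle is extracting from the single $L^{1}(B_{1}(0^{n}))$ closeness statement the much stronger conclusion that $u$ coincides with the exact Riesz kernel $\Theta K_{p}(|\cdot|)$ on $B_{1}(0^{n})$. This rigidity is precisely where strong uniqueness of tangents is essential: it collapses the a priori large family of $0$-homogeneous tangent candidates to a one-parameter family. Once this is achieved, the vanishing of $\Theta^{V}(u,x)$ at $x\neq 0$ is a soft consequence of the $r^{p-2}$ factor in the tangential $p$-flow for $p>2$, making the final contradiction immediate via Lemma \ref{singularity preserve lemma}.
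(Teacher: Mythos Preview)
Your proof is correct and follows essentially the same compactness-and-rigidity route as the paper: contradict, pass to a subsequential $L^{1}_{loc}$ limit $u$ via Lemma~\ref{F subharmonic converge lemma}, identify $u$ on $B_{1}(0^{n})$ with a $0$-homogeneous function which is then forced to be $\Theta K_{p}(|\cdot|)$ by strong uniqueness of tangents, and conclude via Lemma~\ref{singularity preserve lemma}. The paper compresses your middle step by invoking Lemma~\ref{homogeneous converge lemma} rather than rederiving the Cauchy property of $\{h_{i}\}$ by the scaling identity, and it does not spell out (as you do) why $h\in T_{0^{n}}(u)$ or why $\Theta^{V}(u,x)=0$ at $x\neq 0$; these are the implicit steps behind its one-line appeal to strong uniqueness and its final contradiction with (\ref{no singular point lemma equation1}).
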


\begin{proof}
We argue by contradiction, assuming that there exists a sequence of $F$-subharmonic functions $u_{i}$ on $B_{2}(0^{n})$ such that
\begin{enumerate}[~~~~~~(1)]
    \item $\|u_{i}\|_{L^{1}(B_{2}(0^{n}))}\leq\Lambda$;
    \item $u_{i}$ is $(0,i^{-1},1,0^{n})$-homogeneous;
    \item there exists point $x_{i}\in E_{c}(u_{i})\cap A_{\frac{1}{16},\frac{1}{2}}$.
\end{enumerate}
By Lemma \ref{F subharmonic converge lemma}), after passing to a subsequence, we can assume $u_{i}$ converge to $u$ in $L_{loc}^{1}(B_{2}(0^{n}))$ and $x_{i}$ converge to $x$, where $u$ is a $F$-subharmonic function. By (2), Lemma \ref{homogeneous converge lemma} and strong uniqueness holds for $F$, then there exists a constant $\Theta\geq0$ such that
\begin{equation}\label{no singular point lemma equation1}
u(x)=\Theta K_{p}(|x|) ~\text{~in $B_{1}(0^{n})$}.
\end{equation}
By (3) and Lemma \ref{singularity preserve lemma}, we have $x\in E_{c}(u)\cap A_{\frac{1}{16},\frac{1}{2}}$, which contradicts with (\ref{no singular point lemma equation1}).
\end{proof}

\begin{remark}
In \cite{HL1}, Harvey and Lawson proved the discreteness of $E_{c}(u)$ (see \cite[Theorem 14.1, Theorem 14.1']{HL1}). As an immediate corollary of Lemma \ref{no singular point lemma}, we also prove that every point in $E_{c}(u)$ is isolated, which gives another proof of discreteness of $E_{c}(u)$.
\end{remark}

\subsection{Proof of Theorem \ref{estimate of singular set}}
First, we have the following lemma.
\begin{lemma}\label{scale of F-subharmonic}
Let $u$ be a $F$-subharmonic function on $B_{2}(0^{n})$ with $\|u\|_{L^{1}(B_{2}(0^{n}))}\leq\Lambda$. For any $x\in B_{1}(0^{n})$, $r\in(0,\frac{1}{2})$, there exists constant $N(\Lambda,F)$ such that
\begin{equation*}
\int_{B_{1}(0^{n})}|u_{x,r}(y)|dy\leq N.
\end{equation*}
\end{lemma}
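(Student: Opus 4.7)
The plan is to unwind the definition of $u_{x,r}$, split $|u|$ into its positive and negative parts, control the positive part by the subharmonic mean value, and use the monotonicity provided by Lemma \ref{monotonicity condition lemma} to control the negative part. Since we are in the section $p>2$, by Definition \ref{definition of tangential p-flow} we have
\begin{equation*}
\int_{B_{1}(0^{n})}|u_{x,r}(y)|\,dy = r^{p-2}\int_{B_{1}(0^{n})}|u(x+ry)|\,dy = r^{p-2-n}\int_{B_{r}(x)}|u(z)|\,dz,
\end{equation*}
so it suffices to prove $\int_{B_{r}(x)}|u|\,dz \leq N\, r^{n+2-p}$. Writing $|u|=2u^{+}-u$, I would bound the two pieces separately.

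For the first piece, every $F$-subharmonic function is subharmonic (this is used, for example, in Lemma \ref{F subharmonic converge lemma}), so the mean value inequality together with $\|u\|_{L^{1}(B_{2}(0^{n}))}\leq\Lambda$ yields $\sup_{B_{3/2}(0^{n})}u \leq C(n)\Lambda$. Since $B_{r}(x)\subset B_{3/2}(0^{n})$ when $x\in B_{1}(0^{n})$ and $r<\frac{1}{2}$, this gives $\int_{B_{r}(x)}u^{+}\,dz \leq C(n)\Lambda \,\omega_{n}r^{n}$, whose contribution to the rescaled integral is $2C(n)\Lambda\,\omega_{n}\,r^{p-2}$, manifestly bounded for $r<\frac{1}{2}$.

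For the second piece we have $-\int_{B_{r}(x)}u\,dz=-\omega_{n}r^{n}V(u,x,r)$, so the task reduces to showing $-r^{p-2}V(u,x,r)$ is bounded. Here I would invoke Lemma \ref{monotonicity condition lemma}: after replacing $u$ by $u-N$ (which only shifts $V$ by $-N$ and does not change what we are trying to bound), $S(u-N,x,r)/K_{p}(r)$ is nondecreasing in $r\in(0,\frac{1}{2})$. Evaluating at $r=\frac12$ gives a uniform upper bound, and since $K_{p}(r)<0$ for $p>2$ and small $r$, transposing yields $S(u,x,r)\geq N - C(\Lambda,p,n)\,r^{2-p}$. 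Using the identity $V(u,x,r)=\frac{n}{r^{n}}\int_{0}^{r}t^{n-1}S(u,x,t)\,dt$ and integrating (this requires $n+2-p>0$, which is part of the standing hypotheses on the Riesz characteristic), we obtain $V(u,x,r)\geq N - C'(\Lambda,p,n)\,r^{2-p}$. Multiplying by $-r^{p-2}$ produces a bound depending only on $\Lambda$, $p$, $n$, and combining with the first piece completes the argument.

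The main obstacle is purely bookkeeping with $K_{p}$: one has to track signs carefully since $K_{p}(r)<0$, and one needs to pass from the monotonicity statement for $S$ (the form in which Lemma \ref{monotonicity condition lemma} is stated) to a matching lower bound for $V$. Alternatively, if Corollary 5.3 of \cite{HL1} directly supplies the analogous monotonicity for $V(u-N,x,r)/K_{p}(r)$, one can skip the $S\to V$ integration step and conclude in one line. Either way, the proof is genuinely soft once the monotonicity is in place.
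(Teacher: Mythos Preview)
Your proof is correct and follows essentially the same route as the paper: reduce to $u\leq 0$ (equivalently, your $|u|=2u^{+}-u$ split with the sub-mean-value bound on $u^{+}$), then control $-r^{p-2}V(u,x,r)$ via $K_{p}$-convexity. The only difference is that the paper uses the $K_{p}$-convexity of $V(u,x,\cdot)$ directly --- exactly the shortcut you flag in your closing remark --- rather than passing through $S$ via Lemma~\ref{monotonicity condition lemma} and then integrating.
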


\begin{proof}
Without loss of generality, we assume $u\leq0$ on $B_{\frac{3}{2}}(0^{n})$. Since $V(u,x,\cdot)$ is $K_{p}$-convex, we have
\begin{equation*}
\frac{V(u,x,1)-V(u,x,r)}{K_{p}(1)-K_{p}(r)}\leq\frac{V(u,x,1)-V(u,x,\frac{1}{2})}{K_{p}(1)-K_{p}(\frac{1}{2})}\leq C(\Lambda,p,n),
\end{equation*}
which implies
\begin{equation*}
\frac{V(u,x,r)}{K_{p}(r)}\leq\frac{V(u,x,1)}{K_{p}(r)}+C(\Lambda,p,n)\frac{K_{p}(r)-K_{p}(1)}{K_{p}(r)} \leq N(\Lambda,n,p).
\end{equation*}
Since $u\leq0$ on $B_{\frac{3}{2}}(0^{n})$, it then follows that
\begin{equation*}
\int_{B_{1}(0^{n})}|u_{x,r}(y)|dy=-\int_{B_{1}(0^{n})}u_{x,r}(y)dy=\omega_{n}\frac{V(u,x,r)}{K_{p}(r)}\leq N(\Lambda,n,p),
\end{equation*}
as desired.
\end{proof}

Now, we are in the position to prove Theorem \ref{estimate of singular set}.
\begin{proof}[Proof of Theorem \ref{estimate of singular set}]
We split up into two cases.

\bigskip
\noindent
{\bf Case 1.} $u$ satisfies monotonicity condition.

\bigskip
For convenience, we use $S_{0}$ denote $\#\left(E_{c}(u)\cap B_{1}(0^{n})\right)$. And we will obtain an upper bound of $S_{0}$ by induction argument.

For $i=1$, we consider the covering $\{B_{2^{-1}}(x_{j})\}$ of $E_{c}(u)\cap B_{1}(0^{n})$ such that
\begin{enumerate}[~~~~~~(1)]
    \item $x_{j}\in E_{c}(u)\cap B_{1}(0^{n})$;
    \item $B_{2^{-2}}(x_{j})$ are disjoint.
\end{enumerate}
In this covering, there exists a ball containing the largest number of points in $E_{c}(u)\cap B_{1}(0^{n})$ (say $B_{2^{-1}}(x_{1})$, contains $S_{1}$ points in $E_{c}(u)\cap B_{1}(0^{n})$).

If $S_{1}=S_{0}$, we put $T_{1}=0$, otherwise put $T_{1}=1$. If $T_{1}=1$, by (2) and the definition of $S_{1}$, it is clear that
\begin{equation*}
2^{-2n}S_{0}\leq S_{1}<S_{0}.
\end{equation*}
Furthermore, in this case, we have
\begin{equation*}
\left(E_{c}(u)\cap B_{1}(0^{n})\right)\cap\left(B_{2}(z)\setminus B_{\frac{1}{4}}(z)\right)\neq\emptyset
\end{equation*}
for any $z\in B_{2^{-1}}(x_{1})$.

We repeat this process by covering $E_{c}(u)\cap B_{2^{-i}}(x_{i})$ with balls of radius $2^{-i-1}$. Since $E_{c}(u)\cap B_{1}(0^{n})$ is discrete, there exists $i_{0}\in\mathbb{Z}_{+}$ such that $S_{i_{0}}=1$. We define
\begin{equation*}
I:=\{1\leq i\leq i_{0}~|~T_{i}=1\}.
\end{equation*}
Then we obtain
\begin{equation}\label{estimate of singular set equation1}
S_{0}\leq(2^{2n})^{|I|}.
\end{equation}
In order to get an upper bound of $|I|$, we consider the point $x_{i_{0}}$. For any $i\in I$, by construction, we have
\begin{equation*}
\left(E_{c}(u)\cap B_{1}(0^{n})\right)\cap\left(B_{2^{-i+2}}(x_{i_{0}})\setminus B_{2^{-i-1}}(x_{i_{0}})\right)\neq\emptyset,
\end{equation*}
which implies
\begin{equation*}
E_{c}(u_{x_{i_{0}},2^{-i+3}})\cap A_{\frac{1}{16},\frac{1}{2}}\neq\emptyset.
\end{equation*}
Combining Lemma \ref{rigidity for F-subharmonic}, Lemma \ref{no singular point lemma} and Lemma \ref{scale of F-subharmonic}, it is clear that
\begin{equation*}
\theta(u_{x_{i_{0}},2^{-i+3}},0^{n},\frac{1}{2})-\theta(u_{x_{i_{0}},2^{-i+3}},0^{n},\delta_{0})\geq\delta_{0},
\end{equation*}
where $\delta_{0}(\epsilon,c,N,F)$, $\epsilon(c,N,F)$ and $N(\Lambda,F)$ are the constants in Lemma \ref{rigidity for F-subharmonic}, Lemma \ref{no singular point lemma} and Lemma \ref{scale of F-subharmonic}, respectively. Hence, for any $i\in I$, we have
\begin{equation*}
\theta(u,x_{i_{0}},2^{-i+2})-\theta(u,x_{i_{0}},2^{-i+3}\delta_{0})\geq\delta_{0}.
\end{equation*}
Since $F$-subharmonic function is subharmonic (see \cite[p.30]{HL1}), by Lemma \ref{p-plane lemma}, it is clear that
\begin{equation*}
\theta(u,x_{i_{0}},\frac{1}{2})-\theta(u,x_{i_{0}},0)\leq L(\Lambda,p,n),
\end{equation*}
which implies
\begin{equation}\label{estimate of singular set equation2}
|I|\leq C(L,\delta_{0}).
\end{equation}
Combining (\ref{estimate of singular set equation1}) and (\ref{estimate of singular set equation2}), we get the desired estimate.

\bigskip
\noindent
{\bf Case 2.} $u$ does not satisfies monotonicity condition.

\bigskip
By Lemma \ref{monotonicity condition lemma}, we obtain $u-N$ satisfies monotonicity condition. By Case 1, we have
\begin{equation*}
\#\left(E_{c}(u)\cap B_{1}(0^{n})\right)\leq C(c,\Lambda,F).
\end{equation*}
By the definition of $E_{c}(u)$, it is clear that $E_{c}(u)=E_{c}(u-N)$. And this completes the proof.
\end{proof}

\section{$\mathbb{G}$-plurisubharmonic functions}
In this section, we study the singular sets of $\mathbb{G}$-plurisubharmonic functions and give the proof of Theorem \ref{main theorem for G-plurisubharmonic}. Without loss of generality, we assume that $\mathbb{G}$ is a smooth submanifold of $G(p,\mathbf{R}^{n})$ (see Remark \ref{smooth remark}).

\subsection{$\mathbb{G}$-energy}
In this subsection, we introduce the $\mathbb{G}$-energies of $\mathbb{G}$-plurisubharmonic functions. And then we prove a property of $\mathbb{G}$-energy.
\begin{definition}
Let $u$ be a $\mathbb{G}$-plurisubharmonic function on $B_{2R}(0^{n})$. For any $x\in B_{R}(0^{n})$ and $r\in(0,R)$, the $\mathbb{G}$ energy of $u$ is defined by
\begin{equation*}
\theta_{\mathbb{G}}(u,x,r)=\int_{\mathbb{G}}\frac{S_{-}'(u|_{W+x},x,r)}{K_{p}'(r)}dW+\int_{\mathbb{G}}\frac{M_{-}'(u|_{W+x},x,r)}{K_{p}'(r)}dW+\frac{M_{-}'(u,x,r)}{K_{p}'(r)},
\end{equation*}
where $K_{p}$ is the Riesz kernel (see \cite[(1.1)]{HL1}). And we define $\theta_{\mathbb{G}}(u,x,0)=\lim_{r\rightarrow0}\theta_{\mathbb{G}}(u,x,r)$.
\end{definition}
Since $u$ is $\mathbb{G}$-plurisubharmonic, $S(u|_{W+x},x,\cdot)$, $M(u|_{W+x},x,\cdot)$ are $K_{p}$-convex for any $W\in\mathbb{G}$ and $x\in B_{1}(0^{n})$. It is clear that $\theta_{\mathbb{G}}(u,x,r)$ is nondecreasing function in $r$.

\begin{lemma}\label{fiber bundle control lemma}
Let $u$ be a $\mathbb{G}$-plurisubharmonic function on $B_{R}(0^{n})$. Then for any $0<a<b<R$, there exists constant $C(a,b,\mathbb{G})$ such that
\begin{equation*}
\int_{\mathbb{G}}\|u|_{W}\|_{L^{1}(A_{a,b}\cap W)}dW\leq C\|u\|_{L^{1}(A_{a,b})},
\end{equation*}
where $A_{a,b}=\{x\in\mathbf{R}^{n}|a\leq|x|\leq b\}$.
\end{lemma}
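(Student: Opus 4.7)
The plan is to reduce the inequality to an integral identity by passing to spherical coordinates on each $W$ and then applying a Fubini-style argument on the incidence correspondence between $\mathbb{G}$ and $S^{n-1}$; the transitivity of $G$ on $S^{n-1}$ will force the resulting angular multiplicity function to be a constant, after which the bound follows from a simple radial comparison on the annulus.

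First, on each $W\in\mathbb{G}$, write $y=r\omega$ with $r=|y|$ and $\omega\in W\cap S^{n-1}$, so that $d\mathcal{H}^{p}(y)=r^{p-1}\,dr\,d\mathcal{H}^{p-1}(\omega)$. Swapping the order of integration in $r$ and $W$,
\begin{equation*}
\int_{\mathbb{G}}\|u|_{W}\|_{L^{1}(A_{a,b}\cap W)}\,dW=\int_{a}^{b}r^{p-1}\left(\int_{\mathbb{G}}\int_{W\cap S^{n-1}}|u(r\omega)|\,d\mathcal{H}^{p-1}(\omega)\,dW\right)dr.
\end{equation*}
For fixed $r$, consider the smooth incidence variety $J=\{(W,\omega)\in\mathbb{G}\times S^{n-1}:\omega\in W\}$, which is a fiber bundle over $\mathbb{G}$ with fiber $W\cap S^{n-1}\cong S^{p-1}$ and, dually, over $S^{n-1}$ with fiber $\mathbb{G}_{\omega}:=\{W\in\mathbb{G}:\omega\in W\}$. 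A disintegration on $J$ produces a nonnegative measurable function $\phi$ on $S^{n-1}$ with
\begin{equation*}
\int_{\mathbb{G}}\int_{W\cap S^{n-1}}f(\omega)\,d\mathcal{H}^{p-1}(\omega)\,dW=\int_{S^{n-1}}f(\omega)\phi(\omega)\,d\omega
\end{equation*}
for every nonnegative measurable $f$.

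Now I invoke $G$-invariance. Since $G\subset O(n)$ is compact, preserves $\mathbb{G}$, and acts isometrically on both $\mathbb{G}$ (with its Grassmannian metric) and $S^{n-1}$, the incidence structure on $J$ is $G$-equivariant and the natural volume measures are $G$-invariant. A change of variables by any $g\in G$ in the displayed identity gives $\phi(g\omega)=\phi(\omega)$, and transitivity of $G$ on $S^{n-1}$ forces $\phi\equiv C_{1}(\mathbb{G})$. Substituting back and converting from spherical to Cartesian coordinates on the annulus via $dx=r^{n-1}\,dr\,d\omega$,
\begin{equation*}
\int_{\mathbb{G}}\|u|_{W}\|_{L^{1}(A_{a,b}\cap W)}\,dW=C_{1}\int_{a}^{b}r^{p-1}\int_{S^{n-1}}|u(r\omega)|\,d\omega\,dr=C_{1}\int_{A_{a,b}}\frac{|u(x)|}{|x|^{n-p}}\,dx.
\end{equation*}
Since $p\leq n$ (the case $p=n$ is trivial as $\mathbb{G}$ is then a single point), the weight $|x|^{p-n}$ is bounded above on $A_{a,b}$ by $a^{p-n}$, and the lemma follows with $C(a,b,\mathbb{G})=C_{1}(\mathbb{G})\,a^{p-n}$.

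The main obstacle is the measure-theoretic setup of the disintegration in the second paragraph: one must check that with the $G$-invariant volume on $\mathbb{G}$ inherited from $G(p,\mathbf{R}^{n})$ and the uniform measure on $S^{n-1}$, the two projections of $J$ are smooth and locally trivial, so that Fubini (or a coarea formula with bounded Jacobian) yields a genuine $G$-invariant density $\phi$ and not merely a singular pushforward. Once this bookkeeping is done, $G$-transitivity collapses $\phi$ to a constant and the rest of the argument is a one-line radial estimate.
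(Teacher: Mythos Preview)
Your argument is correct and rests on the same double-fibration idea as the paper: the paper works directly with the incidence space $E_{a,b}=\{(W,x)\in\mathbb{G}\times A_{a,b}:x\in W\}$, observes that the two projections $\sigma:E_{a,b}\to\mathbb{G}$ and $\pi:E_{a,b}\to A_{a,b}$ are locally trivial fiber bundles, and simply bounds the Jacobians by compactness to get $\int_{\mathbb{G}}\int_{A_{a,b}\cap W}|u|\leq C_{\sigma}\int_{E_{a,b}}|\pi^{\star}u|\leq C_{\sigma}C_{\pi}\int_{A_{a,b}}|u|$. You instead pass to spherical coordinates first, reduce to the spherical incidence variety $J\subset\mathbb{G}\times S^{n-1}$, and then use $G$-transitivity on $S^{n-1}$ to force the pushforward density $\phi$ to be a constant $C_{1}(\mathbb{G})$, which yields the sharper identity $\int_{\mathbb{G}}\|u|_{W}\|_{L^{1}(A_{a,b}\cap W)}\,dW=C_{1}\int_{A_{a,b}}|u(x)|\,|x|^{p-n}\,dx$ and hence the explicit constant $C_{1}a^{p-n}$. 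The paper's version is quicker and avoids the invariance bookkeeping, while yours extracts an exact weighted formula and an explicit dependence on $a$; the measure-theoretic concern you flag is handled, in both proofs, by the same local product structure (and in your setting the $G$-equivariance of $\pi_{2}:J\to S^{n-1}$ with transitive $G$-action on the base makes $\pi_{2}$ a homogeneous fiber bundle, so the density $\phi$ is smooth).
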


\begin{proof}
For any $0<a<b<R$, we define
\begin{equation*}
E_{a,b}:=\{(W,x)\in\mathbb{G}\times A_{a,b}~|~x\in W\}.
\end{equation*}
Thus, $E_{a,b}\overset{\sigma}{\longrightarrow}\mathbb{G}$ and $E_{a,b}\overset{\pi}{\longrightarrow}A_{a,b}$ are fiber bundles, where $\sigma$ and $\pi$ are projections onto the first and second factor (see \cite[p.7]{HL2}). We consider the pull back function $\pi^{\star}u$ on $E_{a,b}$. Since the fiber bundle is locally a product space, then there exists constants $C_{\sigma}(a,b,\mathbb{G})$ and $C_{\pi}(a,b,\mathbb{G})$ such that
\begin{equation*}
\begin{split}
\int_{\mathbb{G}}\|u|_{W}\|_{L^{1}(A_{a,b}\cap W)}dW
&  =  \int_{\mathbb{G}}\int_{A_{a,b}\cap W}|u|_{W}(x)|dxdW\\
& \leq C_{\sigma}\int_{E_{a,b}}|u|_{W}(x)|dV_{E_{a,b}}\\
&  =  C_{\sigma}\int_{E_{a,b}}|(\pi^{\star}u)(W,x)|dV_{E_{a,b}}\\
& \leq C_{\sigma}C_{\pi}\int_{A_{a,b}}|u(x)|dx,
\end{split}
\end{equation*}
where $dV_{E_{a,b}}$ is the volume form on $E_{a,b}$.
\end{proof}

\begin{lemma}\label{energy bound lemma}
Let $u$ be a $\mathbb{G}$-plurisubharmonic function on $B_{2}(0^{n})$ with $\|u\|_{L^{1}(B_{2}(0^{n}))}\leq\Lambda$. Then for any $x\in B_{1}(0^{n})$, there exists constant $C(\mathbb{G})$ such that
\begin{equation*}
\theta_{\mathbb{G}}(u,x,\frac{1}{2})\leq C(\mathbb{G})\Lambda.
\end{equation*}
\end{lemma}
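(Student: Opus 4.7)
The plan is to bound the three terms comprising $\theta_{\mathbb{G}}(u,x,1/2)$ by $C(\mathbb{G})\Lambda$ one at a time. The first move is to normalize: every $F$-subharmonic function is subharmonic and so obeys the mean-value inequality, giving $\sup_{B_{7/4}(0^{n})} u \leq C(n)\|u\|_{L^{1}(B_{2}(0^{n}))} \leq C(n)\Lambda$. After replacing $u$ by $u-C(n)\Lambda$ I may therefore assume $u \leq 0$ on $B_{7/4}(0^{n})$, and in particular $u \leq 0$ on $B_{3/4}(x)$ for every $x \in B_{1}(0^{n})$. Adding a constant leaves every one-sided derivative $S_{-}'$, $M_{-}'$ in the definition of $\theta_{\mathbb{G}}$ unchanged and only inflates $\|u\|_{L^{1}(B_{2})}$ by a factor depending on $n$.

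The second move uses $K_{p}$-convexity of $r \mapsto S(u|_{W+x},x,r)$, $r \mapsto M(u|_{W+x},x,r)$ and $r \mapsto M(u,x,r)$ (the last from $M(u,x,r) = \sup_{W\in\mathbb{G}} M(u|_{W+x},x,r)$, using transitivity of the $G$-action on $S^{n-1}$, together with the fact that a supremum of $K_{p}$-convex functions is $K_{p}$-convex) to turn the one-sided derivative at $r=1/2$ into a secant from $1/2$ to $3/4$, for instance
\[
\frac{S_{-}'(u|_{W+x},x,1/2)}{K_{p}'(1/2)} \leq \frac{S(u|_{W+x},x,3/4) - S(u|_{W+x},x,1/2)}{K_{p}(3/4)-K_{p}(1/2)}.
\]
After the normalization $S(u|_{W+x},x,3/4) \leq 0$, so the numerator is bounded above by $-S(u|_{W+x},x,1/2)$. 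The comparisons $V \leq S$ (from $V(v,x,r) = p\int_{0}^{1} s^{p-1} S(v,x,rs)\,ds$ together with monotonicity of $S$ for subharmonic $v$) and $V \leq M$ (sup versus average), combined with $u \leq 0$, then control each of
\[
-S(u|_{W+x},x,1/2),\quad -M(u|_{W+x},x,1/2),\quad -M(u,x,1/2)
\]
by a dimensional constant times an $L^{1}$-integral of $|u|$ on $B_{1/2}(x)$, restricted (for the first two) to the affine plane $W+x$.

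The final move is to integrate the two $W$-dependent bounds over $\mathbb{G}$ and apply the translated analogue of Lemma \ref{fiber bundle control lemma}: writing $v(z):=u(z+x)$, which is $\mathbb{G}$-plurisubharmonic on $B_{1}(0^{n})$, the fiber bundle argument of that lemma gives
\[
\int_{\mathbb{G}} \int_{B_{1/2}(x)\cap (W+x)} |u(y)|\,dy\,dW = \int_{\mathbb{G}} \|v|_{W}\|_{L^{1}(B_{1/2}(0^{n})\cap W)}\,dW \leq C(\mathbb{G})\|v\|_{L^{1}(B_{1/2}(0^{n}))} \leq C(\mathbb{G})\Lambda,
\]
after applying Lemma \ref{fiber bundle control lemma} on an annulus $A_{a,1/2}$ and letting $a\to0$ by monotone convergence to absorb the degenerate fiber at the center. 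The third term $\frac{M_{-}'(u,x,1/2)}{K_{p}'(1/2)}$ requires no integration over $\mathbb{G}$ and is bounded directly by $C(n,p)\|u\|_{L^{1}(B_{1/2}(x))} \leq C(n,p)\Lambda$. Summing the three contributions yields $\theta_{\mathbb{G}}(u,x,1/2) \leq C(\mathbb{G})\Lambda$.

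I do not expect a serious obstacle; the proof is essentially book-keeping. The only step that deserves an explicit check is the translation invariance of Lemma \ref{fiber bundle control lemma}: the fiber bundle $\{(W,z)\in\mathbb{G}\times A_{a,1/2}(x):z-x\in W\}$ pulls back via $(W,z)\mapsto(W,z-x)$ to the origin-centered bundle of that lemma, so the constant $C(\mathbb{G})$ is independent of $x\in B_{1}(0^{n})$.
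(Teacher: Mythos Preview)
Your overall strategy matches the paper's, but the final step has a genuine gap. The constant $C(a,b,\mathbb{G})$ in Lemma~\ref{fiber bundle control lemma} is \emph{not} uniform as $a\to 0$. Using the transitivity of $G$ on $S^{n-1}$ one computes directly that
\[
\int_{\mathbb{G}}\|v|_{W}\|_{L^{1}(A_{a,b}\cap W)}\,dW \;=\; c(\mathbb{G})\int_{A_{a,b}}|v(y)|\,|y|^{p-n}\,dy,
\]
so the sharp constant in that lemma is $c(\mathbb{G})\,a^{p-n}$, which blows up when $p<n$ (take $|v|$ supported in a thin shell near the inner radius to see this is attained). Your monotone-convergence argument therefore yields only the vacuous bound $\int_{\mathbb{G}}\|v|_{W}\|_{L^{1}(B_{1/2}\cap W)}\,dW \leq \infty$. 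Geometrically: restricting an $L^{1}(B_{1/2})$ function to all $p$-planes through the origin and summing the slice norms over-weights the center by the factor $|y|^{p-n}$, so control by $\|v\|_{L^{1}(B_{1/2})}$ alone is impossible without further input.

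The fix is to avoid the full ball in $W$. Rather than passing from $-S$ to $-V$, bound $-S(u|_{W+x},x,\tfrac12)$ directly by an $L^{1}$-norm on a \emph{fixed} annulus via Lemma~\ref{p-plane lemma} (applied on the $p$-plane $W+x$):
\[
-S(u|_{W+x},x,\tfrac12) \;\leq\; C(p)\,\bigl\|u|_{W+x}\bigr\|_{L^{1}\bigl((A_{1/3,1}\cap W)+x\bigr)}.
\]
The upper term of the secant (at radius $2/3$ or $3/4$) is likewise bounded by the sub-mean-value inequality on a ball contained in the same annulus. Integrating over $\mathbb{G}$ and applying Lemma~\ref{fiber bundle control lemma} on the fixed annulus $A_{1/3,1}$ then gives the desired $C(\mathbb{G})\Lambda$. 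This is exactly the route the paper takes; your normalization $u\le 0$ is a legitimate simplification of the upper term, but the lower term must be routed through an annulus, not a ball.
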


\begin{proof}
Since $S(u|_{W+x},x,\cdot)$ and $M(u|_{W+x},x,\cdot)$ are $K_{p}$-convex, we have
\begin{equation}\label{energy bound lemma equation1}
\begin{split}
\theta_{\mathbb{G}}(u,x,\frac{1}{2})=& \int_{\mathbb{G}}\frac{S_{-}'(u|_{W+x},x,\frac{1}{2})}{K_{p}'(\frac{1}{2})}dW+\int_{\mathbb{G}}\frac{M_{-}'(u|_{W+x},x,\frac{1}{2})}{K_{p}'(\frac{1}{2})}dW
+\frac{M_{-}'(u,x,\frac{1}{2})}{K_{p}'(\frac{1}{2})}\\
\leq & \int_{\mathbb{G}}\frac{S(u|_{W+x},x,\frac{2}{3})-S(u|_{W+x},x,\frac{1}{2})}{K_{p}(\frac{2}{3})-K_{p}(\frac{1}{2})}dW
+\int_{\mathbb{G}}\frac{M(u|_{W+x},x,\frac{2}{3})-M(u|_{W+x},x,\frac{1}{2})}{K_{p}(\frac{2}{3})-K_{p}(\frac{1}{2})}dW\\
& +\frac{M(u,x,\frac{2}{3})-M(u,x,\frac{1}{2})}{K_{p}(\frac{2}{3})-K_{p}(\frac{1}{2})}.
\end{split}
\end{equation}
By the submean value property of subharmonic functions, it is clear that
\begin{equation}\label{energy bound lemma equation2}
S(u|_{W+x},x,\frac{2}{3})\leq M(u|_{W+x},x,\frac{2}{3})\leq\frac{3^{p}}{\omega_{p}}\|u|_{W+x}\|_{L^{1}((A_{\frac{1}{3},1}\cap W)+x)},
\end{equation}
where $\omega_{p}$ is the volume of unit ball in $\mathbb{R}^{p}$. Combining (\ref{energy bound lemma equation1}), (\ref{energy bound lemma equation2}), Lemma \ref{fiber bundle control lemma} and Lemma \ref{p-plane lemma}, we obtain
\begin{equation*}
\begin{split}
\theta_{\mathbb{G}}(u,x,\frac{1}{2}) \leq& \int_{\mathbb{G}}\frac{S(u|_{W+x},x,\frac{2}{3})-S(u|_{W+x},x,\frac{1}{2})}{K_{p}(\frac{2}{3})-K_{p}(\frac{1}{2})}dW
+\int_{\mathbb{G}}\frac{M(u|_{W+x},x,\frac{2}{3})-M(u|_{W+x},x,\frac{1}{2})}{K_{p}(\frac{2}{3})-K_{p}(\frac{1}{2})}dW\\
& +\frac{M(u,x,\frac{2}{3})-M(u,x,\frac{1}{2})}{K_{p}(\frac{2}{3})-K_{p}(\frac{1}{2})}\\
\leq& ~C\int_{\mathbb{G}}\|u|_{W+x}\|_{L^{1}((A_{\frac{1}{3},1}\cap W)+x)}dW+C\Lambda\\
\leq& ~C~\|u\|_{L^{1}(A_{\frac{1}{3},1}+x)}+C\Lambda\\
\leq& ~C\Lambda,
\end{split}
\end{equation*}
where $C$ depends only on $\mathbb{G}$.
\end{proof}

\subsection{Quantitative rigidity theorem}
In this subsection, we prove quantitative rigidity theorem of $\mathbb{G}$-plurisubharmonic functions.
\begin{lemma}\label{p-plane converge lemma}
Let $\{u_{i}\}$ be a sequence of $\mathbb{G}$-plurisubharmonic functions on $B_{R}(0^{n})$ with $\|u_{i}\|_{L^{1}(B_{R}(0^{n}))}\leq\Lambda$. Then there exists a subsequence $\{u_{i_{k}}\}$ such that $u_{i_{k}}$ converge to $u$ in $L_{loc}^{1}(B_{R}(0^{n}))$, where $u$ is a $\mathbb{G}$-plurisubharmonic function. And for almost every $W\in\mathbb{G}$, $u_{i_{k}}$ converge to $u$ in $L^{1}(A_{a,b}\cap W)$ for any $0<a<b<R$. In particular, for every $r\in(0,R)$, we have
\begin{equation*}
\lim_{k\rightarrow\infty}S(u_{i_{k}}|_{W},0^{p},r)=S(u|_{W},0^{p},r)
\end{equation*}
and
\begin{equation*}
\lim_{k\rightarrow\infty}M(u_{i_{k}}|_{W},0^{p},r)=M(u|_{W},0^{p},r)
\end{equation*}
for almost every $W\in\mathbb{G}$, where $0^{p}$ is the origin in $\mathbf{R}^{p}$.
\end{lemma}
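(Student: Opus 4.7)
The plan is to (i) apply Harvey--Lawson subharmonic compactness to extract a subsequence converging in $L^{1}_{loc}(B_{R}(0^{n}))$ to a $\mathbb{G}$-plurisubharmonic limit $u$, (ii) combine this with the fiber bundle inequality of Lemma \ref{fiber bundle control lemma} to obtain $L^{1}$ convergence on almost every slice $W \in \mathbb{G}$, and (iii) upgrade the slice $L^{1}$ convergence to pointwise convergence of the spherical averages $S$ and suprema $M$ at \emph{every} radius $r$ using the $K_{p}$-convexity of these quantities.

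Since every $\mathbb{G}$-plurisubharmonic function is subharmonic on $B_{R}(0^{n})$, Step~(i) mirrors the proof of Lemma \ref{F subharmonic converge lemma}: classical subharmonic compactness yields a subsequence converging in $L^{1}_{loc}(B_{R}(0^{n}))$, and the closedness of the $F(\mathbb{G})$-subharmonic class under $L^{1}_{loc}$-limits guarantees that the limit $u$ is $\mathbb{G}$-plurisubharmonic. For Step~(ii), I fix $0 < a < b < R$. The proof of Lemma \ref{fiber bundle control lemma} is pure Fubini on the fiber bundle $E_{a,b}$ and does not use $\mathbb{G}$-plurisubharmonicity, so it applies to the $L^{1}$ function $u_{i_{k}} - u$ to produce
\begin{equation*}
\int_{\mathbb{G}} \bigl\| u_{i_{k}}|_{W} - u|_{W} \bigr\|_{L^{1}(A_{a,b} \cap W)}\, dW \;\leq\; C(a,b,\mathbb{G})\, \| u_{i_{k}} - u \|_{L^{1}(A_{a,b})} \;\longrightarrow\; 0.
\end{equation*}
Passing to a further subsequence gives pointwise a.e. convergence of the $W$-slice norms; a diagonal extraction over a countable family $(a_{n}, b_{n})$ exhausting $(0, R)$ then isolates a single subsequence (still denoted $u_{i_{k}}$) and a full-measure set $\mathbb{G}_{0} \subset \mathbb{G}$ along which $u_{i_{k}}|_{W} \to u|_{W}$ in $L^{1}(A_{a_{n}, b_{n}} \cap W)$ for every $W \in \mathbb{G}_{0}$ and every $n$. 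Since any $0 < a < b < R$ is covered by finitely many such $(a_{n}, b_{n})$, this handles all annuli simultaneously.

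For Step~(iii), fix $W \in \mathbb{G}_{0}$. By \cite[Corollary 5.3, Theorem 6.4]{HL1} both $s \mapsto S(u|_{W}, 0^{p}, s)$ and $s \mapsto M(u|_{W}, 0^{p}, s)$ are $K_{p}$-convex and non-decreasing, hence continuous in $s$. The polar-coordinate identity
\begin{equation*}
\int_{a}^{b} S(u_{i_{k}}|_{W}, 0^{p}, s)\, s^{p-1}\, ds \;=\; \frac{1}{p\omega_{p}}\int_{A_{a,b} \cap W} u_{i_{k}}|_{W}\, dy
\end{equation*}
combined with Step~(ii) implies $s^{p-1} S(u_{i_{k}}|_{W}, 0^{p}, \cdot) \to s^{p-1} S(u|_{W}, 0^{p}, \cdot)$ in $L^{1}_{loc}((0, R))$. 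A Helly-type argument — monotone functions converging in $L^{1}_{loc}$ converge pointwise at every continuity point of the limit — then gives pointwise convergence of $S$ at every $r \in (0, R)$. The same reasoning, applied to the integral identity for the non-decreasing function $s \mapsto M(u_{i_{k}}|_{W}, 0^{p}, s)$ together with continuity of the limit, yields the statement for $M$.

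The main obstacle is precisely this last upgrade: $L^{1}$ convergence on annuli only gives pointwise convergence of $S$ and $M$ at a.e.\ radius $r$, whereas the lemma demands convergence at \emph{every} $r$. This is resolved by the $K_{p}$-convexity and continuity of $s \mapsto S(u|_{W}, 0^{p}, s)$ and $s \mapsto M(u|_{W}, 0^{p}, s)$ inherited from the $F$-subharmonicity of the limit $u$, which, through the Helly-type argument, promotes $L^{1}_{loc}$ convergence of monotone functions to pointwise convergence at every point.
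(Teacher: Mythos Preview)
Your Steps~(i) and~(ii) follow the same route as the paper; your diagonal extraction over countably many annuli is in fact a cleaner way to pass from the integrated slice inequality to a.e.\ $W$ convergence than the paper's somewhat informal appeal to Fatou. Your Step~(iii) argument for $S$ via the polar-coordinate identity, monotonicity, and continuity of the limit is correct and amounts to the same mechanism the paper packages in Lemma~\ref{M S stable lemma} together with Lemma~\ref{almost implies every lemma}.

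The gap is in your treatment of $M$. There is no ``integral identity for the non-decreasing function $s \mapsto M(u_{i_{k}}|_{W},0^{p},s)$'' analogous to the polar-coordinate formula for $S$: the quantity $M$ is a supremum, not an average, so $L^{1}$ convergence of $u_{i_{k}}|_{W}$ on $A_{a,b}\cap W$ does not directly yield $L^{1}_{loc}$ convergence of $s\mapsto M(u_{i_{k}}|_{W},0^{p},s)$, and your Helly-type upgrade has no input to act on. The paper handles $M$ by a different mechanism (Lemma~\ref{M S stable lemma}): the upper bound $\limsup_{k} M(u_{i_{k}}|_{W},0^{p},r)\leq M(u|_{W},0^{p},r)$ comes from mollification and upper semicontinuity of subharmonic functions, while the matching lower bound is obtained by a contradiction argument using $L^{1}$ convergence on the annulus. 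You need either to invoke that lemma or to supply such an argument; the polar-coordinate route does not cover $M$.
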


\begin{proof}
By Lemma \ref{F subharmonic converge lemma}, there exists a subsequence $\{u_{i_{k}}\}$ such that $u_{i_{k}}$ converge to $u$ in $L_{loc}^{1}(B_{R}(0^{n}))$, where $u$ is a $\mathbb{G}$-plurisubharmonic function.

For any $0<a<b<R$, recalling $E_{a,b}\overset{\pi}{\longrightarrow}A_{a,b}$ is a fiber bundle, we consider the pull back function $\pi^{\star}u_{i_{k}}$ and $\pi^{\star}u$ on $E_{a,b}$. Since $u_{i_{k}}$ converges to $u$ in $L^{1}(A_{a,b})$, we have $\pi^{\star}u_{i_{k}}$ converge to $\pi^{\star}u$ in $L^{1}(E_{a,b})$, i.e.,
\begin{equation*}
\lim_{k\rightarrow\infty}\int_{E_{a,b}}|\pi^{\star} u_{i_{k}}-\pi^{\star}u|=0,
\end{equation*}
which implies
\begin{equation*}
\lim_{k\rightarrow\infty}\int_{\mathbb{G}}\int_{A_{a,b}\cap W}|u_{i_{k}}(x)-u(x)|dxdW=0.
\end{equation*}
By Fatou's Lemma, we have
\begin{equation*}
\int_{\mathbb{G}}\lim_{k\rightarrow\infty}\int_{A_{a,b}\cap W}|u_{i_{k}}(x)-u(x)|dxdW\leq\lim_{k\rightarrow\infty}\int_{\mathbb{G}}\int_{A_{a,b}\cap W}|u_{i_{k}}(x)-u(x)|dxdW=0.
\end{equation*}
Thus, for almost every $W\in\mathbb{G}$, we obtain
\begin{equation*}
\lim_{k\rightarrow\infty}\int_{A_{a,b}\cap W}|u_{i_{k}}(x)-u(x)|dx=0,
\end{equation*}
which implies $u_{i_{k}}|_{W}$ converge to $u|_{W}$ in $L^{1}(A_{a,b}\cap W)$. Since $u_{i_{k}}|_{W}$ and $u|_{W}$ are subharmonic functions on $A_{a,b}\cap W$, for any $r\in(a,b)$, by Lemma \ref{M S stable lemma}, we obtain
\begin{equation*}
\lim_{k\rightarrow\infty}S(u_{i_{k}}|_{W},0^{p},r)=S(u|_{W},0^{p},r)
\end{equation*}
and
\begin{equation*}
\lim_{k\rightarrow\infty}M(u_{i_{k}}|_{W},0^{p},r)=M(u|_{W},0^{p},r)
\end{equation*}
for almost every $W\in\mathbb{G}$.
\end{proof}

In order to prove quantitative rigidity theorem, we split up into different cases. First, we consider the case $p>2$.

\begin{theorem}[Quantitative rigidity theorem, $p>2$]\label{quantitative rigidity theorem, p>2}
For any $\epsilon,\lambda>0$, there exists constant $\delta_{0}(\epsilon,\lambda,\mathbb{G})$ such that if $u$ is a $\mathbb{G}$-plurisubharmonic function on $B_{\delta_{0}^{-1}}(0^{n})$ and satisfies
\begin{enumerate}[~~~~~~(1)]
    \item $\|u\|_{L^{1}(B_{r}(0^{n}))}\leq \lambda r^{n-p+2}$, for any $r\in(0,\delta_{0}^{-1})$;
    \item $\theta_{\mathbb{G}}(u,0^{n},\delta_{0}^{-1})-\theta_{\mathbb{G}}(u,0^{n},\delta_{0})\leq \delta_{0}$,
\end{enumerate}
then $u$ is $(0,\epsilon,1,0^{n})$-homogeneous.
\end{theorem}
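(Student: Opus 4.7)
The plan is to argue by contradiction, adapting the compactness strategy of Lemma~\ref{rigidity for F-subharmonic} to the $\mathbb{G}$-energy. Suppose the conclusion fails: there exist $\epsilon,\lambda>0$, a sequence $\delta_i\to 0$, and $\mathbb{G}$-plurisubharmonic functions $u_i$ on $B_{\delta_i^{-1}}(0^n)$ satisfying (1) with parameter $\lambda$ and (2) with $\delta=\delta_i$, none of which are $(0,\epsilon,1,0^n)$-homogeneous.

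First I would extract a global limit. Hypothesis (1), applied at $r=R$ for any fixed $R$ and $i$ large, gives a uniform $L^1$-bound for $u_i$ on every compact subset of $\mathbf{R}^n$. A diagonal application of Lemma~\ref{p-plane converge lemma} produces a subsequence with $u_i\to u$ in $L^1_{\rm loc}(\mathbf{R}^n)$, where $u$ is $\mathbb{G}$-plurisubharmonic on all of $\mathbf{R}^n$, inherits the scaling bound $\|u\|_{L^1(B_r(0^n))}\leq\lambda r^{n-p+2}$ for every $r>0$, and $u_i|_W\to u|_W$ in $L^1$ on every annulus for a.e.\ $W\in\mathbb{G}$.

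Next I would show $\theta_\mathbb{G}(u,0^n,\cdot)$ is constant on $(0,\infty)$. Fix $0<s<t<\infty$; for $i$ large we have $\delta_i<s<t<\delta_i^{-1}$, so monotonicity of $\theta_\mathbb{G}(u_i,0^n,\cdot)$ combined with (2) gives $\theta_\mathbb{G}(u_i,0^n,t)-\theta_\mathbb{G}(u_i,0^n,s)\leq\delta_i$. Using the two-sided $K_p$-convex bound
\begin{equation*}
\frac{f_-'(s)}{K_p'(s)}\leq\frac{f(t)-f(s)}{K_p(t)-K_p(s)}\leq\frac{f_-'(t)}{K_p'(t)}
\end{equation*}
with $f=S(u_i|_W,0^p,\cdot)$ and the $M$-variants lets me replace the derivatives defining $\theta_\mathbb{G}$ by finite differences, pass to the limit via Lemma~\ref{p-plane converge lemma} and Fatou, and conclude, using monotonicity of $\theta_\mathbb{G}(u,0^n,\cdot)$, that this quantity is constant in $r$. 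Since $\theta_\mathbb{G}$ is a sum of three individually nondecreasing terms, each is constant, and by Fubini both $S_-'(u|_W,0^p,r)/K_p'(r)$ and $M_-'(u|_W,0^p,r)/K_p'(r)$ are independent of $r$ for a.e.\ $W\in\mathbb{G}$, as is $M_-'(u,0^n,r)/K_p'(r)$.

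The rigidity case of the $K_p$-convex inequality then yields $S(u|_W,0^p,r)=a_W K_p(r)+\alpha_W$, with analogous identities for the two $M$ quantities. For $p>2$, the scaling bound (1) is exactly the $L^1$-threshold that makes the $p$-flow uniformly bounded; together with the fact that $K_p(r)\to 0$ as $r\to\infty$, this rules out the additive constants (let $r\to\infty$ and compare with the growth estimate). Hence $u|_W$ is $0$-homogeneous at $0^p$ for a.e.\ $W$, and, using that the transitive action of $G\subset O(n)$ on $S^{n-1}$ places every ray in some $W\in\mathbb{G}$ together with the $M$-constancy for $u$ itself, $u$ is $0$-homogeneous at $0^n$ in the sense of Definition~\ref{homogeneous definition}. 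Finally, since $(u_i)_{0^n,1}=u_i\to u$ in $L^1_{\rm loc}(\mathbf{R}^n)$, for $i$ large $\|u_i-u\|_{L^1(B_1(0^n))}<\epsilon$, contradicting the assumption. The main obstacle is the rigidity step: showing that constancy of $\theta_\mathbb{G}$ plus the scaling hypothesis (1) actually promotes slice-wise proportionality to $K_p$ into genuine $0$-homogeneity of $u$ on $\mathbf{R}^n$, without losing additive constants along the way.
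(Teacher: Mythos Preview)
Your overall architecture---contradiction, diagonal compactness via Lemma~\ref{p-plane converge lemma}, constancy of $\theta_\mathbb{G}(u,0^n,\cdot)$ on $(0,\infty)$, then rigidity---matches the paper exactly, and your device for passing derivatives to the limit (sandwiching them by finite differences of the $K_p$-convex functions) is equivalent to the paper's use of Lemma~\ref{convex converge lemma}.

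The genuine gap is exactly where you yourself flag it. From constancy of $\theta_\mathbb{G}$ you correctly obtain $S(u|_W,0^p,r)=a_WK_p(r)+\alpha_W$ and $M(u|_W,0^p,r)=b_WK_p(r)+\beta_W$, but even after killing the additive constants this does \emph{not} force $u|_W$ to be a multiple of $K_p$: you would need $a_W=b_W$ (so that spherical maximum equals spherical average on every sphere), and you never argue this. The paper closes the gap by a different route that does not require $a_W=b_W$ a priori: the $S$-identity yields $\int_{B_r\cap W}\Delta(u|_W)$ constant in $r$, hence $u|_W$ is harmonic on $W\setminus\{0^p\}$; the $M$-identity combined with Harnack gives $\limsup_{x\to 0^p}|x|^{p-2}|u|_W(x)|<\infty$, so a removable-singularity theorem for harmonic functions yields $u|_W=\Theta K_p+h_W$ with $h_W$ harmonic on all of $W$; finally the $M$-identity gives $M(h_W,0^p,r)=C_M(W)$ independent of $r$, and the strong maximum principle forces $h_W\equiv C_M(W)$.

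A second, smaller gap: your ``let $r\to\infty$ and compare with the growth estimate'' does not explain how the $L^1$ bound on $u$ in $\mathbf{R}^n$ controls the individual slice quantities $S(u|_W,\cdot)$ or $M(u|_W,\cdot)$. The paper transfers the bound via the fiber-bundle inequality of Lemma~\ref{fiber bundle control lemma}, obtaining $\int_\mathbb{G}\|u|_W\|_{L^1(A_{r,2r}\cap W)}\,dW\leq C\lambda r^{2}$, and then compares this against the contribution $\bigl(\int_\mathbb{G}|C_M(W)|\,dW\bigr)r^{p}$ coming from the constants; since $p>2$, letting $r\to\infty$ kills the integral of $|C_M|$ over $\mathbb{G}$, hence $C_M(W)=0$ for a.e.\ $W$.
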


\begin{proof}
We argue by contradiction, assuming that there exists a sequence of $\mathbb{G}$-plurisubharmonic functions $u_{i}$ on $B_{i}(0^{n})$ such that
\begin{enumerate}[~~~~~~(1)]
    \item $\|u_{i}\|_{L^{1}(B_{r}(0^{n}))}\leq \lambda r^{n-p+2}$, for any $r\in(0,i)$;
    \item $\theta_{\mathbb{G}}(u_{i},0^{n},i)-\theta_{\mathbb{G}}(u_{i},0^{n},i^{-1})\leq i^{-1}$;
    \item $u_{i}$ is not $(0,\epsilon,1,0^{n})$-homogeneous.
\end{enumerate}
By Lemma \ref{p-plane converge lemma}, there exists a subsequence $\{u_{i_{k}}\}$ such that $u_{i_{k}}$ converge to $u$ in $L_{loc}^{1}(\mathbf{R}^{n})$, where $u$ is a $\mathbb{G}$-plurisubharmonic function on $\mathbf{R}^{n}$. And for any $r>0$, we have
\begin{equation*}
\lim_{k\rightarrow\infty}S(u_{i_{k}}|_{W},0^{p},r)=S(u|_{W},0^{p},r)
\end{equation*}
and
\begin{equation*}
\lim_{k\rightarrow\infty}M(u_{i_{k}}|_{W},0^{p},r)=M(u|_{W},0^{p},r)
\end{equation*}
for almost every $W\in\mathbb{G}$.

Since $S(u|_{W},0^{p},\cdot)$ and $M(u|_{W},0^{p},\cdot)$ are $K_{p}$-convex functions, combining this with Fatou's Lemma, Lemma \ref{p-plane converge lemma} and Lemma \ref{convex converge lemma}, for almost any $r>t>0$, we obtain
\begin{equation*}
\begin{split}
\theta_{\mathbb{G}}(u,0^{n},r)-\theta_{\mathbb{G}}(u,0^{n},t)
= &  \int_{\mathbb{G}} \lim_{k\rightarrow\infty} \left(\frac{S_{-}'(u_{i_{k}}|_{W},0^{p},r)}{K_{p}'(r)}-\frac{S_{-}'(u_{i_{k}}|_{W},0^{p},t)}{K_{p}'(t)}\right)dW\\
  & +\int_{\mathbb{G}} \lim_{k\rightarrow\infty} \left(\frac{M_{-}'(u_{i_{k}}|_{W},0^{p},r)}{K_{p}'(r)}-\frac{M_{-}'(u_{i_{k}}|_{W},0^{p},t)}{K_{p}'(t)}\right)dW\\
  & +\lim_{k\rightarrow\infty} \left(\frac{M_{-}'(u_{i_{k}},0^{p},r)}{k_{p}'(r)}-\frac{M_{-}'(u_{i_{k}},0^{p},t)}{k_{p}'(t)}\right)\\
\leq & \int_{\mathbb{G}} \lim_{k\rightarrow\infty}\left(\frac{S_{-}'(u_{i_{k}}|_{W},0^{p},i_{k})}{K_{p}'(i_{k})}-\frac{S_{-}'(u_{i_{k}}|_{W},0^{p},i_{k}^{-1})}{K_{p}'(i_{k}^{-1})}\right)dW\\
& +\int_{\mathbb{G}} \lim_{k\rightarrow\infty}\left(\frac{M_{-}'(u_{i_{k}}|_{W},0^{p},i_{k})}{K_{p}'(i_{k})}-\frac{M_{-}'(u_{i_{k}}|_{W},0^{p},i_{k}^{-1})}{K_{p}'(i_{k}^{-1})}\right)dW\\
& +\lim_{k\rightarrow\infty} \left(\frac{M_{-}'(u_{i_{k}},0^{p},i_{k})}{k_{p}'(i_{k})}-\frac{M_{-}'(u_{i_{k}},0^{p},i_{k}^{-1})}{k_{p}'(i_{k}^{-1})}\right)\\
\leq& \lim_{k\rightarrow\infty}\left(\theta_{\mathbb{G}}(u_{i_{k}},0^{n},i_{k})-\theta_{\mathbb{G}}(u_{i_{k}},0^{n},i_{k}^{-1})\right)\\
\leq& ~~0.
\end{split}
\end{equation*}
By the monotonicity of $\theta_{\mathbb{G}}(u,0^{n},\cdot)$, we have
\begin{equation*}
\theta_{\mathbb{G}}(u,0^{n},r)=\theta_{\mathbb{G}}(u,0^{n},0),
\end{equation*}
for any $r>0$. It then follows that
\begin{equation}\label{quantitative rigidity theorem equation2}
S(u|_{W},0^{p},r)=\Theta(u|_{W},0^{p})K_{p}(r)+C_{S}(W)
\end{equation}
and
\begin{equation}\label{quantitative rigidity theorem equation3}
M(u|_{W},0^{p},r)=\Theta(u|_{W},0^{p})K_{p}(r)+C_{M}(W)
\end{equation}
for almost every $W\in\mathbb{G}$, where $\Theta(u|_{W},0^{p})=\Theta^{S}(u|_{W},0^{p})=\Theta^{M}(u|_{W},0^{p})$ (see \cite[(12.3)]{HL1}). By (\ref{quantitative rigidity theorem equation2}), for any $b>a>0$, we obtain
\begin{equation*}
\begin{split}
\int_{\left(B_{b}(0^{n})\setminus B_{a}(0^{n})\right)\cap W}\Delta(u|_{W}) =& \int_{B_{b}(0^{n})\cap W}\Delta(u|_{W})-\int_{B_{a}(0^{n})\cap W}\Delta(u|_{W})\\
=& ~ C(p)\left(\frac{S_{-}'(u|_{W},0^{p},b)}{K_{p}'(b)}-\frac{S'_{-}(u|_{W},0^{p},a)}{K_{p}'(a)}\right)\\
=& ~~0,
\end{split}
\end{equation*}
where we used $S_{-}'(u|_{W},0^{n},r)=C(p)K_{p}'(r)\int_{B_{r}(0^{n})}\Delta(u|_{W})$ for any $r>0$ (see e.g. \cite[Theorem 3.2.16]{Ho} or \cite[p.33]{HL1}). It then follows that $u|_{W}$ is harmonic on $W\setminus\{0^{p}\}$. By Harnack's inequality and (\ref{quantitative rigidity theorem equation3}), it is clear that
\begin{equation}\label{quantitative rigidity theorem equation4}
\limsup_{x\rightarrow0^{p}}|x|^{p-2}|u|_{W}(x)|<+\infty.
\end{equation}
Combining Theorem 10.5 in \cite{GTM137} and (\ref{quantitative rigidity theorem equation4}), we get
\begin{equation}\label{quantitative rigidity theorem equation5}
u|_{W}(x)=\Theta(u|_{W},0^{p})K_{p}(|x|)+h_{W}(x)
\end{equation}
on $W$, where $h_{W}$ is a harmonic function on $W$. By (\ref{quantitative rigidity theorem equation3}) and (\ref{quantitative rigidity theorem equation5}), we have
\begin{equation*}
M(h_{W},0^{p},r)=C_{M}(W),
\end{equation*}
for any $r>0$. By Strong Maximum Principle, we conclude that $h_{W}=C_{M}(W)$. It then follows that $u|_{W}=\Theta(u|_{W},0^{p})K_{p}+C_{M}(W)$ for almost every $W\in\mathbb{G}$. Combining Lemma \ref{fiber bundle control lemma} and (1), by scaling, we obtain
\begin{equation*}
\int_{\mathbb{G}}\int_{A_{r,2r}\cap W}|u|_{W}|dW\leq C(\mathbb{G})r^{p-n}\int_{A_{r,2r}}|u(x)|dx\leq C(\mathbb{G})\lambda r^{2},
\end{equation*}
which implies
\begin{equation*}
\int_{\mathbb{G}}\int_{A_{r,2r}\cap W}|-\Theta(u|_{W},0^{p})|x|^{2-p}+C_{M}(W)|dW\leq C(\mathbb{G})\lambda r^{2}.
\end{equation*}
It then follows that
\begin{equation*}
\left(\int_{\mathbb{G}}|C_{M}(W)|dW\right)r^{p}\leq C(\mathbb{G})\left(\int_{\mathbb{G}}\Theta(u|_{W})dW+\lambda\right)r^{2}.
\end{equation*}
Since $p>2$ and $r$ is arbitrary, we have
\begin{equation*}
\int_{\mathbb{G}}|C_{M}(W)|dW=0.
\end{equation*}
Therefore, it is clear that $u|_{W}=\Theta(u|_{W},0^{p})K_{p}$ for almost every $W\in\mathbb{G}$. Recalling $u$ is a subharmonic function on $\mathbf{R}^{n}$, we get $u$ is $0$-homogeneous. However, $u_{i_{k}}$ converge to $u$ in $L_{loc}^{1}(B_{2}(0^{n}))$. Then $u_{i_{k}}$ are $(0,\epsilon,1,0^{n})$-homogeneous when $k$ is sufficiently large, which is a contradiction.
\end{proof}

Next, we prove quantitative rigidity theorem for the case $p=2$. First, we need the following lemma.

\begin{lemma}\label{p=2 density lemma}
Let $u$ be a $\mathbb{G}$-subharmonic function on $B_{2}(0^{n})$. If $p=2$, then
\begin{equation*}
\Theta^{M}(u|_{W},0^{2})=\Theta^{M}(u,0^{n}),
\end{equation*}
for almost every $W\in\mathbb{G}$.
\end{lemma}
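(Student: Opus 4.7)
The plan is to compare $M(u|_W, 0^2, r)$ with $M(u, 0^n, r)$ along a blow-up sequence $r_i \to 0^+$ and then use log-convexity (which is $K_2$-convexity, since $K_2(r) = \log r$) to pass to the limit of densities. First, by Lemma \ref{F subharmonic converge lemma} I extract $r_i \to 0^+$ such that $u_{0,r_i}$ converges in $L^1_{loc}(\mathbf{R}^n)$ to some tangent $\varphi \in T_{0^n}(u)$. Applying Lemma \ref{p-plane converge lemma} to the sequence $\{u_{0,r_i}\}$ (which is $\mathbb{G}$-plurisubharmonic on arbitrarily large balls and locally uniformly bounded in $L^1$), I obtain, for a.e.\ $W \in \mathbb{G}$,
$$\lim_{i \to \infty} M(u_{0,r_i}|_{W}, 0^2, 1) \;=\; M(\varphi|_{W}, 0^2, 1),$$
with the right-hand side finite, since for a.e.\ such $W$ the restriction $\varphi|_W$ is subharmonic on $W$ and not identically $-\infty$.

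The key identity, obtained by unwinding Definition \ref{definition of tangential p-flow} for $p=2$, is
$$M(u_{0,r_i}|_{W}, 0^2, 1) \;=\; \sup_{y \in B_1(0^n)\cap W}\bigl(u(r_i y) - M(u, 0^n, r_i)\bigr) \;=\; M(u|_{W}, 0^2, r_i) - M(u, 0^n, r_i).$$
Combined with the previous step, this shows that $M(u|_{W}, 0^2, r_i) - M(u, 0^n, r_i)$ stays bounded as $i \to \infty$ for a.e.\ $W$. Because $u$ is subharmonic and $u|_W$ is subharmonic on $W$ (by the Restriction Theorem cited in the introduction), both $M(u, 0^n, \cdot)$ and $M(u|_{W}, 0^2, \cdot)$ are $K_2$-convex, so the densities
$$\Theta^M(u, 0^n) = \lim_{r \to 0^+} \frac{M(u, 0^n, r)}{\log r}, \qquad \Theta^M(u|_{W}, 0^2) = \lim_{r \to 0^+} \frac{M(u|_{W}, 0^2, r)}{\log r}$$
exist, and the difference limit exists as well:
$$\Theta^M(u|_{W}, 0^2) - \Theta^M(u, 0^n) \;=\; \lim_{r \to 0^+} \frac{M(u|_{W}, 0^2, r) - M(u, 0^n, r)}{\log r}.$$
Evaluating along $r = r_i$, the numerator is bounded while $\log r_i \to -\infty$, so the limit is $0$, yielding the asserted equality for a.e.\ $W \in \mathbb{G}$.

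The main obstacle is the joint convergence furnished by Lemma \ref{p-plane converge lemma}: without the convergence of the sup quantity $M$ on a.e.\ slice, there would be no control on $M(u_{0,r_i}|_W, 0^2, 1)$, and the trivial inequality $\Theta^M(u|_W, 0^2) \geq \Theta^M(u, 0^n)$ (which follows from $(W+0)\cap B_r(0) \subset B_r(0)$ together with $\log r < 0$) could not be upgraded to equality. The $p=2$ normalization (subtracting $M(u, 0^n, r)$ in the tangential flow) is essential for the key identity; the analogous statement for $p > 2$ would require a different argument based on the $r^{p-2}$ scaling and on the explicit form of the tangent.
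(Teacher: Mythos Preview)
Your proof is correct and follows essentially the same route as the paper: both extract a blow-up sequence $r_i\to 0$, use Lemma~\ref{p-plane converge lemma} to control the slice behavior for a.e.\ $W$, derive from the $p=2$ tangential-flow normalization that $M(u|_W,0^2,r_i)-M(u,0^n,r_i)$ has a finite limit, and then divide by $\log r_i$. The only cosmetic difference is that the paper obtains this finite limit by introducing an auxiliary tangent $\psi\in T_{0^2}(u|_W)$ and comparing $(u_{0^n,r_i})|_W$ with $(u|_W)_{0^2,r_i}$ pointwise, whereas you go straight to the sup and invoke the $M$-convergence conclusion of Lemma~\ref{p-plane converge lemma}; your version is a slight streamlining of the same argument.
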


\begin{proof}
Let $\varphi$ be a tangent to $u$ at $0^{n}$. Then there exists a sequence $\{r_{i}\}$ such that $\lim_{i\rightarrow\infty}r_{i}=0$ and $u_{0^{n},r_{i}}$ converge to $\varphi$ in $L_{loc}^{1}(\mathbf{R}^{n})$. For almost every $W\in\mathbb{G}$. By Lemma \ref{p-plane converge lemma}, we obtain that $u_{0^{n},r_{i}}|_{W}$ converge to $\varphi|_{W}$ in $L^{1}(A_{1,2}\cap W)$. On the other hand, for any non-polar plane $W\in\mathbb{G}$ (for definition of non-polar plane, see \cite[p.5]{HL2}), by passing to a subsequence, we can assume $(u|_{W})_{0^{2},r_{i}}$ converge to $\psi$ in $L_{loc}^{1}(\mathbf{R}^{2})$, where $\psi\in T_{0^{2}}(u|_{W})$. By the definition of the tangential 2-flow, it is clear that
\begin{equation*}
(u_{0^{n},r_{i}})|_{W}(x)-(u|_{W})_{0^{2},r_{i}}(x)=M(u|_{W},0^{2},r_{i})-M(u,0^{n},r_{i}),
\end{equation*}
for almost every $x\in A_{1,2}\cap W$. Since the left hand side converges to $(\varphi|_{W}-\psi)$ in $L^{1}(A_{1,2}\cap W)$ and the right hand side is independent of $x$, then we obtain
\begin{equation*}
\lim_{i\rightarrow\infty}\left(M(u|_{W},0^{2},r_{i})-M(u,0^{n},r_{i})\right)=C,
\end{equation*}
where $C$ is a constant. It then follows that
\begin{equation*}
\Theta^{M}(u|_{W},0^{2})-\Theta^{M}(u,0^{n})=\lim_{i\rightarrow\infty}\left(\frac{M(u|_{W},0^{2},r_{i})}{K_{2}(r_{i})}-\frac{M(u|_{W},0^{n},r_{i})}{K_{2}(r_{i})}\right)=0,
\end{equation*}
as required.
\end{proof}

\begin{theorem}[Quantitative rigidity theorem, $p=2$]\label{quantitative rigidity theorem, p=2}
For any $\epsilon,\lambda>0$, there exists constant $\delta_{0}(\epsilon,\lambda,\mathbb{G})$ such that if $u$ is a $\mathbb{G}$-plurisubharmonic function on $B_{\delta_{0}^{-1}}(0^{n})$ and satisfies
\begin{enumerate}[~~~~~~(1)]
    \item $\|u\|_{L^{1}(B_{r}(0^{n}))}\leq\lambda r^{n}(|\log r|+1)$, for any $r\in(0,\delta_{0}^{-1})$;
    \item $M(u,0^{n},1)=0$;
    \item $\theta_{\mathbb{G}}(u,0^{n},\delta_{0}^{-1})-\theta_{\mathbb{G}}(u,0^{n},\delta_{0})\leq \delta_{0}$,
\end{enumerate}
then $u$ is $(0,\epsilon,1,0^{n})$-homogeneous.
\end{theorem}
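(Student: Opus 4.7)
The plan is to mirror the proof of Theorem \ref{quantitative rigidity theorem, p>2}, with adjustments dictated by the appearance of the logarithm in $K_2$ and by the presence of the extra normalization (2). Arguing by contradiction, I would take a sequence of $\mathbb{G}$-plurisubharmonic functions $u_i$ on $B_i(0^n)$ satisfying (1), (2), and $\theta_{\mathbb{G}}(u_i,0^n,i)-\theta_{\mathbb{G}}(u_i,0^n,i^{-1})\leq i^{-1}$, none of which is $(0,\epsilon,1,0^n)$-homogeneous. By Lemma \ref{p-plane converge lemma}, after passing to a subsequence, $u_{i_k}\to u$ in $L^1_{loc}(\mathbf{R}^n)$ with convergence of $S(u_{i_k}|_W,0^2,r)$ and $M(u_{i_k}|_W,0^2,r)$ for almost every $W\in\mathbb{G}$. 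The same Fatou-plus-convexity argument used in the $p>2$ case (invoking Lemma \ref{convex converge lemma}) then forces $\theta_{\mathbb{G}}(u,0^n,\cdot)$ to be constant on $(0,\infty)$, so that
\[ S(u|_W,0^2,r)=\Theta(u|_W,0^2)K_2(r)+C_S(W),\quad M(u|_W,0^2,r)=\Theta(u|_W,0^2)K_2(r)+C_M(W) \]
for almost every $W\in\mathbb{G}$, and $M(u,0^n,r)=\Theta^M(u,0^n)K_2(r)+C$.

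The $S$-identity shows $u|_W$ is harmonic on $W\setminus\{0^2\}$, so by the Bôcher-type representation (\cite[Theorem 10.5]{GTM137}) together with the $M$-identity and the strong maximum principle, one obtains
\[ u|_W(x)=\Theta(u|_W,0^2)K_2(|x|)+C_M(W) \]
for almost every $W$. Lemma \ref{p=2 density lemma} then gives $\Theta(u|_W,0^2)=\Theta^M(u,0^n)=:\Theta$ for almost every $W$. Because $u$ is a single function on $\mathbf{R}^n$, whenever two a.e.\ planes $W,W'\in\mathbb{G}$ share a nonzero common point the formula forces $C_M(W)=C_M(W')$; one then propagates this equality across $\mathbb{G}$ using the $G$-transitivity on $S^{n-1}$ and the orbit structure $\mathbb{G}=G\cdot W_0$ to conclude that $C_M$ is a.e.\ equal to a single constant $C_0$, whence $u(x)=\Theta K_2(|x|)+C_0$ a.e., and identically by upper semicontinuity.

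To finish, I would use hypothesis (2) to force $C_0=0$: since $M(u_i,0^n,1)=0$ for every $i$, and since the rigid form $M(u,0^n,r)=\Theta K_2(r)+C$ makes $M(u,0^n,\cdot)$ continuous, the $L^1_{loc}$-stability of $M$ for subharmonic functions (as in Lemma \ref{M S stable lemma}) passes the normalization to the limit, giving $M(u,0^n,1)=0$; combined with $K_2(1)=0$ this yields $C=0$ and hence also $C_0=0$. Therefore $u=\Theta K_2(|\cdot|)$ is $0$-homogeneous, and $u_{i_k}\to u$ in $L^1_{loc}$ forces $u_{i_k}$ to be $(0,\epsilon,1,0^n)$-homogeneous for large $k$, a contradiction. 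The main obstacle is the propagation step for $C_M$: unlike the $p>2$ case where the scaling in growth condition (1) alone killed the constant, here the logarithm places that condition and the normalization (2) on an equal footing, and one must genuinely exploit both the single-valuedness of $u$ and the $G$-transitivity to conclude $C_M$ is essentially constant.
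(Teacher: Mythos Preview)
Your setup through the point where you obtain, for almost every $W\in\mathbb{G}$,
\[
u|_{W}(x)=\Theta^{M}(u,0^{n})\,K_{2}(|x|)+C_{M}(W)\quad\text{and}\quad M(u,0^{n},r)=\Theta^{M}(u,0^{n})\,K_{2}(r)
\]
(the latter using hypothesis (2) together with $K_{2}(1)=0$) matches the paper exactly. The gap is in what you do next: you attempt to show that $C_{M}(W)$ is a.e.\ equal to a single constant $C_{0}$, and you flag this ``propagation step'' as the main obstacle. In fact this step is not just hard---it is false in general. Take $\mathbb{G}$ to be the complex lines in $\mathbf{C}^{2}\cong\mathbf{R}^{4}$ and $u(z,w)=\log|z|$. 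This is $\mathbb{G}$-plurisubharmonic, satisfies $M(u,0,1)=0$, has constant $\theta_{\mathbb{G}}$, and on the complex line $\{w=w_{0}z\}$ one computes $u|_{W}=K_{2}(|\cdot|)-\tfrac{1}{2}\log(1+|w_{0}|^{2})$, so $C_{M}(W)$ genuinely varies with $W$. Your single-valuedness/transitivity argument cannot succeed here because distinct complex lines meet only at the origin.

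The paper avoids this entirely by observing that $0$-homogeneity does \emph{not} require $u$ to be radial. Using the additivity of the logarithm, $K_{2}(r|x|)=K_{2}(r)+K_{2}(|x|)$, one computes directly for $x\in W$:
\[
u_{0^{n},r}(x)=u(rx)-M(u,0^{n},r)
=\Theta\,K_{2}(r|x|)+C_{M}(W)-\Theta\,K_{2}(r)
=\Theta\,K_{2}(|x|)+C_{M}(W)=u(x),
\]
and the constant $C_{M}(W)$ cancels regardless of its value. Since this holds for a.e.\ $W$ and $\bigcup_{W\in\mathbb{G}}W=\mathbf{R}^{n}$ by transitivity, one gets $u_{0^{n},r}=u$ a.e., hence everywhere by subharmonicity, so $u$ is $0$-homogeneous. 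The rest of your argument (passing the contradiction back to $u_{i_k}$) is fine; only the intermediate goal of proving $C_{M}$ constant needs to be replaced by this direct verification of the tangential-flow invariance.
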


\begin{proof}
We argue by contradiction, assuming that there exists a sequence of $\mathbb{G}$-plurisubharmonic functions $u_{i}$ on $B_{i}(0^{n})$ such that
\begin{enumerate}[~~~~~~(1)]
    \item $\|u_{i}\|_{L^{1}(B_{r}(0^{n}))}\leq\lambda r^{n}(|\log r|+1)$, for any $r\in(0,i)$;
    \item $M(u_{i},0^{n},1)=0$;
    \item $\theta_{\mathbb{G}}(u_{i},0^{n},i)-\theta_{\mathbb{G}}(u_{i},0^{n},i^{-1})\leq i^{-1}$;
    \item $u_{i}$ is not $(0,\epsilon,1,0^{n})$-homogeneous.
\end{enumerate}
By Lemma \ref{F subharmonic converge lemma}, there exists a subsequence $\{u_{i_{k}}\}$ such that $u_{i_{k}}$ converge to $u$ in $L_{loc}^{1}(\mathbf{R}^{n})$, where $u$ is a $\mathbb{G}$-plurisubharmonic function on $\mathbf{R}^{n}$. By (2) and Lemma \ref{M S stable lemma}, we obtain $M(u,0^{n},1)=0$. Combining this and the similar argument in Theorem \ref{quantitative rigidity theorem, p>2}, for any $r>0$, we have
\begin{equation*}
\theta_{\mathbb{G}}(u,0^{n},r)=\theta_{\mathbb{G}}(u,0^{n},0),
\end{equation*}
which implies
\begin{equation*}
M(u,0^{n},r)=\Theta^{M}(u,0^{n})K_{2}(r)
\end{equation*}
and
\begin{equation*}
u|_{W}=\Theta^{M}(u|_{W},0^{2})K_{2}+C_{W},
\end{equation*}
for almost every $W\in\mathbb{G}$, where $C_{W}$ is a constant on $W$. By Lemma \ref{p=2 density lemma}, we obtain
\begin{equation*}
u|_{W}=\Theta^{M}(u,0^{n})K_{2}+C_{W}.
\end{equation*}
For $x\in W$, by definition of tangential $2$-flow, it is clear that
\begin{equation*}
\begin{split}
u_{0^{n},r}(x) & = u(rx)-M(u,0^{n},r)\\
& = \Theta^{M}(u,0^{n})K_{2}(rx)+C_{W}-\Theta^{M}(u,0^{n})K_{2}(r)\\
& = u(x).
\end{split}
\end{equation*}
It then follows that $u_{0^{n},r}(x)=u(x)$ for almost every $x\in\mathbf{R}^{n}$. Since $u_{0^{n},r}$ and $u$ are subharmonic functions. We obtain that $u_{0^{n},r}=u$ for any $r>0$. Then $u$ is $0$-homogeneous. When $k$ is sufficiently large, $u_{i_{k}}$ is $(0,\epsilon,1,0^{n})$-homogeneous, which contradicts with (4).
\end{proof}

\subsection{Covering lemma and decomposition lemma}
Let $u$ be a $\mathbb{G}$-plurisubharmonic function on $B_{2}(0^{n})$ with $\|u\|_{L^{1}(B_{2}(0^{n}))}\leq\Lambda$. First, we introduce the following definitions.
\begin{definition}
For any $\epsilon>0$, $t\geq1$ and $0<r<1$, we define
\begin{equation*}
\mathcal{H}_{t,r,\epsilon}=\{x\in B_{1}(0^{n})~|~\mathcal{N}_{t}(u,B_{r}(x))>\epsilon\}
\end{equation*}
and
\begin{equation*}
\mathcal{L}_{t,r,\epsilon}=\{x\in B_{1}(0^{n})~|~\mathcal{N}_{t}(u,B_{r}(x))\leq\epsilon\},
\end{equation*}
where
\begin{equation*}
\mathcal{N}_{t}(u,B_{r}(x))=\inf\{\delta>0~|~\text{$u$ is $(0,\delta,tr,x)$-homogeneous}\}.
\end{equation*}
\end{definition}

\begin{definition}
For any $x\in B_{1}(0^{n})$ and $\gamma\in(0,1)$, we define $j$-tuple $T^{j}(x)=(T_{1}^{j}(x),T_{2}^{j}(x),\cdots,T_{j}^{j}(x))$ by
\begin{equation*}
T_{i}^{j}(x)=\left\{ \begin{array}{ll}
1~~~~~~\text{if $x\in\mathcal{H}_{\gamma^{-1},\gamma^{i},\epsilon}$}\\
0~~~~~~\text{if $x\in\mathcal{L}_{\gamma^{-1},\gamma^{i},\epsilon}$}
\end{array}\right.
\end{equation*}
for all $1\leq i\leq j$, where $\epsilon=\epsilon(\eta,\gamma,\Lambda,\mathbb{G})$ is the constant in Lemma \ref{covering lemma} and $\gamma>0$ is a constant to be determined later.
\end{definition}

\begin{definition}
For any $j$-tuple $T^{j}$, we define
\begin{equation*}
E_{T^{j}}=\{x\in B_{1}(0^{n})~|~T^{j}(x)=T^{j}\}.
\end{equation*}
\end{definition}

Next, for each $E_{T^{j}}\neq\emptyset$, we define a collection of sets $\{\mathcal{C}_{\eta,\gamma^{j}}^{k}(T^{j})\}$ by induction, where $\mathcal{C}_{\eta,\gamma^{j}}^{k}(T^{j})$ is the union of balls of radius $\gamma^{j}$. For $j=0$, we put $\mathcal{C}_{\eta,\gamma^{0}}^{k}(T^{j})=B_{1}(0^{n})$. Assume that $\mathcal{C}_{\eta,\gamma^{j-1}}^{k}(T^{j-1})$ has been defined and satisfies $\mathcal{S}_{\eta,\gamma^{j}}^{k}(u)\cap E_{T^{j}}\subset \mathcal{C}_{\eta,\gamma^{j-1}}^{k}(T^{j-1})$, where $T^{j-1}$ is the $(j-1)$-tuple obtained from $T^{j}$ by dropping the last entry. For each ball $B_{\gamma^{j-1}}(x)$ of $\mathcal{C}_{\eta,\gamma^{j-1}}^{k}(T^{j-1})$, take a minimal covering of $B_{\gamma^{j-1}}(x)\cap \mathcal{S}_{\eta,\gamma^{j}}^{k}(u)\cap E_{T^{j}}$ by balls of radius $\gamma^{j}$ with centers in $B_{\gamma^{j-1}}(x)\cap \mathcal{S}_{\eta,\gamma^{j}}^{k}(u)\cap E_{T^{j}}$. Define the union of all balls so obtained to be $\mathcal{C}_{\eta,\gamma^{j}}^{k}(T^{j})$.

\begin{lemma}\label{rescaling ball in cone-splitting lemma}
For any $x\in B_{1}(0^{n})$, $s\in(0,\frac{1}{2})$ and $r\in(0,\frac{1}{2}s^{-1})$, there exists constant $N(\Lambda,p,n)$ such that
\begin{equation*}
\int_{B_{r}(0^{n})}|u_{x,s}(y)|dy
\leq\left\{ \begin{array}{ll}
\ Nr^{n-p+2} ~~\text{~when $p>2$}\\
\ Nr^{n}(|\log r|+1) ~~\text{~when $p=2$}
\end{array}.\right.
\end{equation*}
\end{lemma}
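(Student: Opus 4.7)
The plan is to split into the two regimes $p>2$ and $p=2$, reducing in each case the integral at scale $s$ on $B_r(0^n)$ to one at scale $sr$ on $B_1(0^n)$ via a change of variables.

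For $p>2$, Definition~\ref{definition of tangential p-flow} gives the homogeneous identity $u_{x,s}(r\tilde y) = r^{2-p}\,u_{x,sr}(\tilde y)$, so
\begin{equation*}
\int_{B_r(0^n)}|u_{x,s}(y)|\,dy \;=\; r^{n-p+2}\int_{B_1(0^n)}|u_{x,sr}(\tilde y)|\,d\tilde y,
\end{equation*}
and since $sr\in(0,\tfrac{1}{2})$ and $x\in B_1(0^n)$, Lemma~\ref{scale of F-subharmonic} applied at scale $sr$ bounds the remaining integral by $N(\Lambda,F)$, yielding $\leq Nr^{n-p+2}$.

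For $p=2$, the tangential flow subtracts $M(u,x,s)$, and the rescaling identity becomes $u_{x,s}(r\tilde y) = u_{x,sr}(\tilde y) - [M(u,x,s) - M(u,x,sr)]$. When $r\leq 1$, both summands on the right are non-positive on $B_1$ (using $M(u_{x,sr},0^n,1)=0$ and the monotonicity of $M$), so absolute values add and
\begin{equation*}
\int_{B_r(0^n)}|u_{x,s}|\,dy \;=\; \omega_n r^n \bigl(M(u,x,s) - V(u,x,sr)\bigr).
\end{equation*}
I split the bracket as $[M(u,x,s)-V(u,x,s)] + [V(u,x,s)-V(u,x,sr)]$. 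The second piece is bounded by $C(\Lambda)|\log r|$ via $K_2$-convexity of $V(u,x,\cdot)$: the difference-quotient on $[\log(sr),\log s]$ is dominated by its value on the fixed interval $[\log(1/3),\log(1/2)]$, itself controlled by the submean-value inequality and $\|u\|_{L^1(B_2(0^n))}\leq\Lambda$. The first piece, equal to $\omega_n^{-1}\int_{B_1(0^n)}|u_{x,s}|$, is the $p=2$ analogue of Lemma~\ref{scale of F-subharmonic}. The complementary range $1<r<s^{-1}/2$ is handled by the same rescaling identity with $M(u,x,sr)-M(u,x,s)\geq 0$ and a triangle-inequality estimate.

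The main obstacle is establishing a uniform bound $M(u,x,\rho)-V(u,x,\rho)\leq C(\Lambda,\mathbb{G})$ for $\rho\in(0,\tfrac{1}{2})$ in the $p=2$ case; without it, an unwanted $|\log s|$ factor would appear in the final estimate. I would prove this by exploiting the $\mathbb{G}$-plurisubharmonic structure: for $F(\mathbb{G})$ with $p=2$, homogeneity of tangents gives a common density $\Theta(u,x)=\Theta^M(u,x)=\Theta^V(u,x)$, and the $K_2$-convex remainders $M(u,x,\rho)-\Theta(u,x)K_2(\rho)$ and $V(u,x,\rho)-\Theta(u,x)K_2(\rho)$ are both uniformly bounded in $\rho$ by a constant depending only on $\Lambda$ and $\mathbb{G}$ (via submean-value and $K_p$-convex slope arguments). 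Subtracting cancels the $\Theta K_2$ terms and yields the required uniform bound on $M-V$.
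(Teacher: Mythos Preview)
Your treatment of $p>2$ is correct and amounts to the same $K_p$-convexity computation as the paper's proof, just factored through the rescaling identity and Lemma~\ref{scale of F-subharmonic}.

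For $p=2$ there is a genuine gap. Your decomposition
\[
M(u,x,s)-V(u,x,sr)=\bigl[M(u,x,s)-V(u,x,s)\bigr]+\bigl[V(u,x,s)-V(u,x,sr)\bigr]
\]
is fine, and the second bracket is correctly controlled by $C(\Lambda)|\log r|$ via $K_2$-convexity of $V$. The problem is your justification of the first bracket. You claim that the remainders $M(u,x,\rho)-\Theta K_2(\rho)$ and $V(u,x,\rho)-\Theta K_2(\rho)$ are each uniformly bounded via ``submean-value and $K_p$-convex slope arguments.'' Those arguments give only \emph{upper} bounds: each remainder is non-decreasing in $\log\rho$ (slope $\geq 0$ since the density is the slope at $-\infty$), hence bounded above by its value at $\rho=\tfrac12$. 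But to bound $M-V$ from above you need a \emph{lower} bound on $V-\Theta K_2$, and nothing you have written provides one. In fact one can build $K_2$-convex profiles (e.g.\ $M(u,0,\rho)=\Theta\log\rho-\sqrt{-\log\rho}$, realized by a radial subharmonic function) for which each remainder tends to $-\infty$ as $\rho\to 0$; the difference $M-V$ happens to stay bounded, but not because the individual remainders do. So the density-cancellation argument, as stated, does not close.

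The paper proceeds differently and avoids this obstacle. It bounds $|M(u_{x,s},0^n,r)|\leq C|\log r|$ directly from $K_2$-convexity of $M(u,x,\cdot)$ (your second bracket in disguise), and then invokes the Harnack inequality \cite[(7.10)]{HL1},
\[
S(u_{x,s},0^n,r)\;\geq\;C\!\left(M(u_{x,s},0^n,\tfrac{r}{2})-M(u_{x,s},0^n,r)\right)+M(u_{x,s},0^n,r)\;\geq\;-C|\log r|,
\]
to control $S$ (and, after integrating in the radial variable, $V$) from below by $-C(|\log r|+1)$. This Harnack step is precisely the missing ingredient in your argument: it is what transfers the $M$-bound to a $V$-bound at each scale, and it gives a constant $N(\Lambda,p,n)$ with no dependence on the $\mathbb{G}$-structure or on the density $\Theta(u,x)$. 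If you want to salvage your decomposition, the clean fix is to bound $M(u,x,s)-V(u,x,s)=\omega_n^{-1}\|u_{x,s}\|_{L^1(B_1)}$ by this same Harnack inequality (the $r=1$ case of the paper's estimate), rather than by the density argument.
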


\begin{proof}
Without loss of generality, we assume $u\leq0$ on $B_{\frac{3}{2}}(0^{n})$. When $p>2$, since $V(u,x,\cdot)$ is $K_{p}$-convex, we have
\begin{equation*}
0\leq\frac{V(u,x,1)-V(u,x,rs)}{K_{p}(1)-K_{p}(rs)}\leq\frac{V(u,x,1)-V(u,x,\frac{1}{2})}{K_{p}(1)-K_{p}(\frac{1}{2})}\leq C(\Lambda,p,n),
\end{equation*}
which implies
\begin{equation*}
\frac{V(u,x,rs)}{K_{p}(rs)}\leq\frac{V(u,x,1)}{K_{p}(rs)}+C(\Lambda,p,n)\frac{K_{p}(rs)-K_{p}(1)}{K_{p}(rs)} \leq N(\Lambda,p,n).
\end{equation*}
Since $u\leq0$ on $B_{\frac{3}{2}}(0^{n})$, it then follows that
\begin{equation*}
\int_{B_{r}(0^{n})}|u_{x,s}(y)|dy
=-\int_{B_{r}(0^{n})}u_{x,s}(y)dy
=\omega_{p}\frac{V(u,x,rs)}{K_{p}(rs)}r^{n-p+2}
\leq Nr^{n-p+2}.
\end{equation*}

When $p=2$, by similar calculations, we have
\begin{equation}\label{rescaling ball in cone-splitting lemma equation 1}
|M(u_{x,s},0^{n},r)|=\frac{M(u,x,sr)-M(u,x,s)}{K_{2}(sr)-K_{2}(s)}|\log r|\leq C(\Lambda,n)|\log r|.
\end{equation}
By Harnack's inequality (see \cite[(7.10)]{HL1}), we obtain
\begin{equation*}
S(u_{x,s},0^{n},r)\geq C\left(M(u_{x,s},0^{n},\frac{r}{2})-M(u_{x,s},0^{n},r)\right)+M(u_{x,s},0^{n},r)\geq-C|\log r|,
\end{equation*}
which implies
\begin{equation}\label{rescaling ball in cone-splitting lemma equation 2}
V(u_{x,s},0^{n},r)=n\int_{0}^{1}S(u_{x,s},0^{n},rt)t^{n-1}dt\geq-C(|\log r|+1).
\end{equation}
Combining (\ref{rescaling ball in cone-splitting lemma equation 1}) and (\ref{rescaling ball in cone-splitting lemma equation 2}), it is clear that
\begin{equation*}
\begin{split}
\int_{B_{r}(0^{n})}|u_{x,s}(y)|dy
& = \int_{B_{r}(0^{n})}(M(u_{x,s},0^{n},r)-u_{x,s}(y))dy+\int_{B_{r}(0^{n})}|M(u_{x,s},0^{n},r)|dy\\
& \leq Cr^{n}(|\log r|+1),
\end{split}
\end{equation*}
as desired.
\end{proof}

\begin{lemma}\label{apply cone-splitting lemma}
For all $\epsilon$, $\tau$, $\gamma>0$, there exists constant $\delta(\epsilon,\tau,\gamma,\Lambda,\mathbb{G})$ with the following property. For any $r\leq1$, if $x\in \mathcal{L}_{\gamma^{-1},\gamma r,\delta}(u)$, then there exists nonnegative integer $l\leq n$ such that
\begin{enumerate}[~~~~~~(1)]
    \item $u$ is $(l,\epsilon,r,x)$-homogeneous with respect to $k$-plane $V_{u,x}^{k}$;
    \item $\mathcal{L}_{\gamma^{-1},\gamma r,\delta}\cap B_{r}(x)\subset B_{\tau r}(V_{u,x}^{k})$.
\end{enumerate}
\end{lemma}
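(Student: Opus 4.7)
The plan is to build the plane by iterated applications of the cone-splitting lemma (Lemma \ref{cone-splitting lemma}), starting from the trivial $0$-plane through $x$ provided by the hypothesis and stopping as soon as condition (2) holds, or once $l=n$ is reached.

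First I would rescale to unit scale by setting $\tilde u := u_{x,r}$. Assuming $r \leq \frac{1}{2}$ (so that $B_{2}(0^{n})$ lies in the domain of $\tilde u$; the case $r \in (\frac{1}{2}, 1]$ reduces to this one after a harmless extra scaling), Lemma \ref{rescaling ball in cone-splitting lemma} gives $\|\tilde u\|_{L^{1}(B_{2}(0^{n}))} \leq \tilde\Lambda(\Lambda, p, n)$, and $\tilde u$ is itself $\mathbb{G}$-plurisubharmonic. A direct calculation (treating the cases $p > 2$ and $p = 2$ separately, using Definition \ref{definition of tangential p-flow}) yields the identity $\tilde u_{z, 1} = u_{y, r}$ whenever $y = x + rz$ with $z \in B_{1}(0^{n})$. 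Consequently the membership $y \in \mathcal{L}_{\gamma^{-1}, \gamma r, \delta}$, which is just the $(0, \delta, r, y)$-homogeneity of $u$, is equivalent to $\tilde u$ being $(0, \delta, 1, z)$-homogeneous. In particular, the hypothesis $x \in \mathcal{L}_{\gamma^{-1}, \gamma r, \delta}$ says exactly that $\tilde u$ is $(0, \delta, 1, 0^{n})$-homogeneous with respect to the trivial $0$-plane.

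Next I would fix the tolerances by a finite backward induction. Write $\delta_{CS}(\cdot, \cdot, \cdot, \cdot)$ for the constant produced by Lemma \ref{cone-splitting lemma}. Set $\epsilon_{n} := \epsilon$ and, for $l = n-1, n-2, \ldots, 0$, set
\[
\epsilon_{l} := \min\bigl\{\epsilon_{l+1},\ \delta_{CS}(\epsilon_{l+1}, \tau, \tilde\Lambda, \mathbb{G})\bigr\}.
\]
Finally, take $\delta := \epsilon_{0}$; this depends only on $\epsilon$, $\tau$, $\gamma$, $\Lambda$ and $\mathbb{G}$ (the parameter $\gamma$ enters only through the rescaling identification).

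Now I would run the following iteration starting at $l = 0$. At stage $l$, I have a linear $l$-plane $V^{l} \subset \mathbf{R}^{n}$ such that $\tilde u$ is $(l, \epsilon_{l}, 1, 0^{n})$-homogeneous with respect to $V^{l}$; the base case $l = 0$ was established above. If $\mathcal{L}_{\gamma^{-1}, \gamma r, \delta} \cap B_{r}(x) \subset B_{\tau r}(V^{l} + x)$, stop: the affine plane $V_{u, x}^{l} := V^{l} + x$ satisfies conclusion (2), and conclusion (1) holds since $\epsilon_{l} \leq \epsilon$. Otherwise, pick any $y \in \mathcal{L}_{\gamma^{-1}, \gamma r, \delta} \cap B_{r}(x)$ with $y \notin B_{\tau r}(V^{l} + x)$, and set $z := (y - x)/r \in B_{1}(0^{n}) \setminus B_{\tau}(V^{l})$. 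By the rescaling identity above, $\tilde u$ is $(0, \delta, 1, z)$-homogeneous, and by construction $\delta \leq \epsilon_{l} \leq \delta_{CS}(\epsilon_{l+1}, \tau, \tilde\Lambda, \mathbb{G})$. Both hypotheses of Lemma \ref{cone-splitting lemma} therefore hold with tolerance $\delta_{CS}(\epsilon_{l+1}, \tau, \tilde\Lambda, \mathbb{G})$, and its conclusion upgrades $\tilde u$ to being $(l+1, \epsilon_{l+1}, 1, 0^{n})$-homogeneous with respect to $V^{l+1} := \mathrm{span}\{z, V^{l}\}$. I then repeat with $l + 1$ in place of $l$. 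Since $\dim V^{l} = l \leq n$, the process terminates after at most $n$ steps.

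The only real subtlety is the backward construction of the tolerances $\epsilon_{l}$: at every stage the existing $l$-homogeneity must be strong enough to serve as the $\delta$-hypothesis of the next cone-splitting application, while the tolerance at the (a priori unknown) terminal stage must still be bounded by the target $\epsilon$. Everything else --- the uniform $L^{1}$ bound on $\tilde u$, the rescaling identity $\tilde u_{z, 1} = u_{y, r}$, and the fact that $l$ can grow by at most $n$ --- is routine bookkeeping.
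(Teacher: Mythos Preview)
Your argument is correct and follows essentially the same route as the paper: a backward-defined chain of tolerances $\epsilon_n \geq \epsilon_{n-1} \geq \cdots \geq \epsilon_0 = \delta$ feeding repeated applications of the cone-splitting lemma (Lemma~\ref{cone-splitting lemma}) to the rescaled function $u_{x,r}$, with the uniform $L^{1}$ bound supplied by Lemma~\ref{rescaling ball in cone-splitting lemma}. The only cosmetic difference is that you phrase the core step as an explicit iteration that terminates, whereas the paper takes the largest $l$ for which $u$ is $(l,\delta^{[l]},r,x)$-homogeneous and derives a contradiction if condition~(2) fails; these are the same argument.
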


\begin{proof}
First, we define $\delta^{[l]}$ by induction. We put $\delta^{[n]}=\frac{\epsilon}{2}$. Then we define $\delta^{[l]}=\delta(\tau,\delta^{[l+1]},N(\Lambda,\mathbb{G}),\mathbb{G})$, where $\delta$ and $N$ are the constants in Lemma \ref{cone-splitting lemma} and Lemma \ref{rescaling ball in cone-splitting lemma}, respectively. Let us put $\delta<\delta^{[0]}$. Then $\delta<\delta^{[0]}\leq\delta^{[1]}\leq\cdot\cdot\cdot\leq\delta^{[n]}=\frac{\epsilon}{2}$. Since $x\in \mathcal{L}_{\gamma^{-1},\gamma r,\delta}(u)$, we have $u$ is $(0,\delta^{[0]},r,x)$-homogeneous. Then there exists a largest $l$ such that $u$ is $(l,\delta^{[l]},r,x)$-homogeneous, which implies $u_{x,r}$ is $(l,\delta^{[l]},1,0^{n})$-homogeneous at $0^{n}$.

If there exists $y\in\left(\mathcal{L}_{\gamma^{-1},\gamma r,\delta}\cap B_{r}(x)\right)\setminus B_{\tau r}(V_{u,x}^{l})$, then $\tilde{y}=\frac{1}{r}(y-x)\in B_{1}(0^{n})\setminus B_{\tau}(V_{u_{x,r},0^{n}}^{l})$ and $u_{x,r}$ is $(l,\delta^{[l]},1,\tilde{y})$-homogeneous. By Lemma \ref{cone-splitting lemma}, we obtain $u_{x,r}$ is $(l+1,\delta^{[l+1]},1,0^{n})$-homogeneous, which implies $u$ is $(l+1,\delta^{[l+1]},r,x)$-homogeneous, which contradicts with our assumption that $l$ is the largest one.
\end{proof}

\begin{lemma}[Covering lemma]\label{covering lemma}
There exists constant $\epsilon(\eta,\gamma,\Lambda,\mathbb{G})$ such that if $x\in\mathcal{L}_{\gamma^{-1},\gamma^{j},\epsilon}$ and $B_{\gamma^{j-1}}(x)$ is a ball of $\mathcal{C}_{\eta,\gamma^{j-1}}^{k}(T^{j-1})$, then the number of balls in the minimal covering of $B_{\gamma^{j-1}}(x)\cap\mathcal{S}_{\eta,\gamma^{j}}^{k}(u)\cap\mathcal{L}_{\gamma^{-1},\gamma^{j},\epsilon}$ is less than $C(n)\gamma^{-k}$.
\end{lemma}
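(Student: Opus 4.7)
The plan is to apply Lemma \ref{apply cone-splitting lemma} at the center $x$ and scale $r=\gamma^{j-1}$ in order to extract a low-dimensional affine plane which traps the set to be covered, and then cover a $\gamma^{j}$-tubular neighborhood of that plane by balls of radius $\gamma^{j}$.

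Concretely, set $\epsilon'=\eta$ and $\tau=\gamma$, and let $\epsilon=\delta(\epsilon',\tau,\gamma,\Lambda,\mathbb{G})$ be the constant furnished by Lemma \ref{apply cone-splitting lemma}. Since $x\in\mathcal{L}_{\gamma^{-1},\gamma^{j},\epsilon}$, applying that lemma with $r=\gamma^{j-1}$ yields an integer $0\leq l\leq n$ and an $l$-plane $V^{l}$ (through $x$) such that
\begin{enumerate}[~~~~~~(i)]
    \item $u$ is $(l,\eta,\gamma^{j-1},x)$-homogeneous with respect to $V^{l}$;
    \item $\mathcal{L}_{\gamma^{-1},\gamma^{j},\epsilon}\cap B_{\gamma^{j-1}}(x)\subset B_{\gamma^{j}}(V^{l})$.
\end{enumerate}

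The heart of the proof is the assertion $l\leq k$. By the recursive construction of $\mathcal{C}_{\eta,\gamma^{j-1}}^{k}(T^{j-1})$, any ball $B_{\gamma^{j-1}}(x)$ of that collection has its center $x$ in $\mathcal{S}_{\eta,\gamma^{j-1}}^{k}(u)$, so $u$ is not $(k+1,\eta,\gamma^{j-1},x)$-homogeneous. On the other hand, any function which is $l$-homogeneous with respect to $V^{l}$ is automatically $(k+1)$-homogeneous with respect to any $(k+1)$-dimensional subspace of $V^{l}$ whenever $l\geq k+1$ (directly from Definition \ref{homogeneous definition}). Consequently, if $l\geq k+1$ then property (i) would force $u$ to be $(k+1,\eta,\gamma^{j-1},x)$-homogeneous, a contradiction. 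Hence $l\leq k$.

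Combining (ii) with $l\leq k$, the set $B_{\gamma^{j-1}}(x)\cap\mathcal{S}_{\eta,\gamma^{j}}^{k}(u)\cap\mathcal{L}_{\gamma^{-1},\gamma^{j},\epsilon}$ is contained in the slab $B_{\gamma^{j}}(V^{l})\cap B_{\gamma^{j-1}}(x)$, which is a $\gamma^{j}$-thickening of an $l$-disk of radius $\gamma^{j-1}$. A standard Vitali/volume-comparison argument then produces a cover by at most $C(n)(\gamma^{j-1}/\gamma^{j})^{l}=C(n)\gamma^{-l}\leq C(n)\gamma^{-k}$ balls of radius $\gamma^{j}$, which is the desired bound. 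The main delicate point is the step $l\leq k$, which depends on the recursion placing the centers of the parent-generation balls in $\mathcal{S}_{\eta,\gamma^{j-1}}^{k}(u)$; this holds by construction for $j\geq 2$, while the base case $j=1$ can be handled either by absorbing it into the constant (since $B_{1}(0^{n})$ admits a trivial cover by $C(n)\gamma^{-n}$ balls of radius $\gamma$) or by running the same cone-splitting argument at a radius slightly smaller than $1$ and invoking $L^{1}$-continuity of $u_{x,r}$ in $r$.
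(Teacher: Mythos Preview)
Your argument is essentially the same as the paper's: apply Lemma \ref{apply cone-splitting lemma} at scale $r=\gamma^{j-1}$, use that the center $x$ lies in $\mathcal{S}_{\eta,\gamma^{j-1}}^{k}(u)$ to force $l\leq k$, and then cover the resulting tube. The only cosmetic differences are that the paper chooses $\tau=\gamma/10$ rather than $\tau=\gamma$, and that the paper writes $x\in\mathcal{S}_{\eta,\gamma^{j}}^{k}(u)$ where you (more accurately, given the recursive construction) write $x\in\mathcal{S}_{\eta,\gamma^{j-1}}^{k}(u)$; your explicit handling of the $j=1$ base case is a welcome clarification that the paper omits.
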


\begin{proof}
We put $\epsilon=\delta(\eta,\tau,\gamma,\Lambda,\mathbb{G})$, where $\delta$ is the constant in Lemma \ref{apply cone-splitting lemma}. Since $x\in\mathcal{L}_{\gamma^{-1},\gamma^{j},\epsilon}$, by Lemma \ref{apply cone-splitting lemma}, there exists nonnegative integer $l\leq n$ such that
\begin{enumerate}[~~~~~~(1)]
    \item $u$ is $(l,\eta,\gamma^{j-1},x)$-homogeneous with respect to $k$-plane $V_{u,x}^{k}$;
    \item $\mathcal{L}_{\gamma^{-1},\gamma^{j},\eta}\cap B_{\gamma^{j-1}}(x)\subset B_{\tau\gamma^{j-1}}(V_{u,x}^{k})$.
\end{enumerate}
Since $x\in\mathcal{S}_{\eta,\gamma^{j}}^{k}(u)$, we obtain that $u$ is not $(k+1,\eta,\gamma^{j-1},x)$-homogeneous, which implies $l\leq k$. Hence, by choosing $\tau=\frac{\gamma}{10}$, we have
\begin{equation*}
B_{\gamma^{j-1}}(x)\cap\mathcal{S}_{\eta,\gamma^{j}}^{k}(u)\cap\mathcal{L}_{\gamma^{-1},\gamma^{j},\epsilon}\subset B_{\gamma^{j-1}}(x)\cap B_{\frac{\gamma^{j}}{10}}(V_{u,x}^{k}).
\end{equation*}
This completes the proof.
\end{proof}

\begin{lemma}[Decomposition lemma]\label{number of covering lemma}
There exists constants $C_{0}(n)$, $C_{1}(n)$, $K(\eta,\gamma,\Lambda,\mathbb{G})$, $Q(\eta,\gamma,\Lambda,\mathbb{G})$ and $\gamma_{0}(\eta,\Lambda,\mathbb{G})$ such that for any $\gamma<\gamma_{0}$ and $j\in\mathbb{Z}_{+}$, we have
\begin{enumerate}[~~~~~~(1)]
    \item The set $\mathcal{S}_{\eta,\gamma^{j}}^{k}(u)\cap B_{1}(0^{n})$ can be covered by at most $j^{K}$ nonempty sets $\mathcal{C}_{\eta,\gamma^{j}}^{k}$.
    \item Each set $\mathcal{C}_{\eta,\gamma^{j}}^{k}$ is the union of at most $(C_{1}\gamma^{-n})^{Q}\cdot(C_{0}\gamma^{-k})^{j-Q}$ balls of radius $\gamma^{j}$.
\end{enumerate}
\end{lemma}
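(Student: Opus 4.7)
The plan is to prove both claims simultaneously by a quantitative differentiation argument on $\theta_\mathbb{G}$, combined with the Covering Lemma \ref{covering lemma}. The central observation is that each ``$1$'' in a nonempty tuple $T^j$ consumes a definite amount of $\mathbb{G}$-energy on a scale window that is pairwise disjoint from the windows belonging to other ``$1$''s, so the total number of ``$1$''s is controlled by the total $\mathbb{G}$-energy bound of Lemma \ref{energy bound lemma}.

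First I would record the scale-invariance $\theta_\mathbb{G}(u_{x,s},0^n,r)=\theta_\mathbb{G}(u,x,sr)$ under the tangential $p$-flow, which is a direct computation from the definition using $K_p'(sr)=s^{1-p}K_p'(r)$ and works uniformly for $p>2$ and $p=2$. Next, suppose $T_i^j(x)=1$, i.e. $u_{x,\gamma^{i-1}}$ is not $(0,\epsilon,1,0^n)$-homogeneous. By Lemma \ref{rescaling ball in cone-splitting lemma} the rescaled function satisfies the $L^1$-growth hypothesis of the quantitative rigidity theorem (Theorem \ref{quantitative rigidity theorem, p>2} for $p>2$, Theorem \ref{quantitative rigidity theorem, p=2} for $p=2$, where the normalization $M(\cdot,0^n,1)=0$ is automatic since $M(u_{x,s},0^n,1)=0$ by definition of the tangential $2$-flow). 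The contrapositive of the rigidity theorem, combined with scale-invariance, yields a threshold $\delta_0=\delta_0(\epsilon,\Lambda,\mathbb{G})$ with
\[
\theta_\mathbb{G}(u,x,\gamma^{i-1}\delta_0^{-1})-\theta_\mathbb{G}(u,x,\gamma^{i-1}\delta_0)>\delta_0.
\]
Setting $\gamma_0:=\delta_0^{2}$, the windows $(\gamma^{i-1}\delta_0,\gamma^{i-1}\delta_0^{-1})$ are pairwise disjoint for distinct $i$ whenever $\gamma<\gamma_0$. Combined with $\theta_\mathbb{G}(u,x,1/2)\leq C(\mathbb{G})\Lambda$ and monotonicity, this forces the number of ``$1$''s in any nonempty $T^j$ to be at most $Q:=\lceil C(\mathbb{G})\Lambda/\delta_0\rceil$.

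Granting this bound, claim (1) is a pure counting step: the number of binary strings in $\{0,1\}^j$ with at most $Q$ ones is $\sum_{q=0}^{Q}\binom{j}{q}\leq(Q+1)j^{Q}\leq j^{K}$ for a suitable $K=K(Q)$ (small $j$ being absorbed by enlarging $K$). Claim (2) is proved by induction on $j$: at step $i$, each ball $B_{\gamma^{i-1}}(x)$ of $\mathcal{C}_{\eta,\gamma^{i-1}}^{k}(T^{i-1})$ is refined into balls of radius $\gamma^{i}$. If $T_{i}^{j}=0$, then every point of $B_{\gamma^{i-1}}(x)\cap\mathcal{S}_{\eta,\gamma^{i}}^{k}(u)\cap E_{T^{j}}$ lies in $\mathcal{L}_{\gamma^{-1},\gamma^{i},\epsilon}$, so the Covering Lemma \ref{covering lemma} applied at such a point yields a cover by at most $C_{0}(n)\gamma^{-k}$ balls; if $T_{i}^{j}=1$, a trivial Vitali cover uses at most $C_{1}(n)\gamma^{-n}$ balls. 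Taking the product over $i$ gives at most $(C_{1}\gamma^{-n})^{Q'}(C_{0}\gamma^{-k})^{j-Q'}$ balls for a tuple with $Q'$ ones; since $Q'\leq Q$ and $C_{1}\gamma^{-n}\geq C_{0}\gamma^{-k}$ for $\gamma$ small (using $n>k$), this is dominated by $(C_{1}\gamma^{-n})^{Q}(C_{0}\gamma^{-k})^{j-Q}$, as desired.

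The hard part is organizing the energy accounting cleanly: verifying the scale-invariance of $\theta_\mathbb{G}$ under tangential $p$-flow, and ensuring $\gamma_0$ is small enough that the rigidity-induced energy-drop windows at different scales do not overlap (and that $C_{1}\gamma^{-n}$ dominates $C_{0}\gamma^{-k}$). Some minor care is also needed at the initial scales $i=1,2$ where the rigidity theorem's requirement that $u_{x,\gamma^{i-1}}$ be defined on a large ball is borderline; these finitely many steps can be absorbed into the constant $Q$ by passing to the trivial volume estimate. Once these setup issues are handled, the combinatorial counting in (1) and the multiplicative ball count in (2) follow immediately.
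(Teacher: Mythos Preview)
Your proposal is correct and follows essentially the same approach as the paper: quantitative differentiation on $\theta_{\mathbb{G}}$ combined with the rigidity theorem to bound $|T^j|$, then induction via the Covering Lemma for part (2). The only cosmetic difference is that the paper uses overlapping windows $(\gamma^{i},\gamma^{i-2})$ with $\gamma_0=\delta_0$ (incurring a harmless factor $3$ in the energy sum), whereas you use disjoint windows $(\gamma^{i-1}\delta_0,\gamma^{i-1}\delta_0^{-1})$ with $\gamma_0=\delta_0^{2}$; your handling of the boundary scales $i=1,2$ is also more explicit than the paper's.
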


\begin{proof}
First, we prove (1). We need to prove $|T^{j}|\leq K(\eta,\gamma,\Lambda,\mathbb{G})$ if $E_{T^{j}}\neq\emptyset$. For any $0<s<t<1$ and $x\in B_{1}(0^{n})$, we define
\begin{equation*}
\mathcal{W}_{s,t}(x)=\theta_{\mathbb{G}}(u,x,t)-\theta_{\mathbb{G}}(u,x,s)\geq0.
\end{equation*}
Fixing a point $x_{0}\in E_{T^{j}}$, we consider the set
\begin{equation*}
I=\{i\in\mathbb{Z}_{+}~|~\mathcal{W}_{\gamma^{i},\gamma^{i-2}}(x_{0})\geq\delta_{0}\},
\end{equation*}
where $\delta_{0}$ is the constant in Theorem \ref{quantitative rigidity theorem, p>2} ($p>2$) or Theorem \ref{quantitative rigidity theorem, p=2} ($p=2$). It is clear that
\begin{equation*}
\sum_{i\in I}\mathcal{W}_{\gamma^{i},\gamma^{i-2}}(x_{0})\leq3\mathcal{W}_{0,1}(x_{0}).
\end{equation*}
By Lemma \ref{energy bound lemma}, we have
\begin{equation*}
|I|\cdot\delta_{0}\leq3C(\mathbb{G})\Lambda.
\end{equation*}
For any $i\notin I$, by $\mathcal{W}_{\gamma^{i},\gamma^{i-2}}(x_{0})\leq\delta_{0}$, we have
\begin{equation}\label{number of covering lemma equation1}
\theta(u_{x_{0},\gamma^{i-1}},0^{n},\gamma^{-1})-\theta(u_{x_{0},\gamma^{i-1}},0^{n},\gamma)=\mathcal{W}_{\gamma^{i},\gamma^{i-2}}(x_{0})<\delta_{0}.
\end{equation}
Now, we put $\gamma_{0}=\delta_{0}$. Thus, if $\gamma<\gamma_{0}$, combining (\ref{number of covering lemma equation1}), Theorem \ref{quantitative rigidity theorem, p>2} ($p>2$), Theorem \ref{quantitative rigidity theorem, p=2} ($p=2$), Lemma \ref{rescaling ball in cone-splitting lemma} and $M(u_{x_{0},\gamma^{i-1}},0^{n},1)=0$ when $p=2$, we obtain $u_{x_{0},\gamma^{i-1}}$ is $(0,\epsilon,1,0^{n})$-homogeneous, which implies $u$ is $(0,\epsilon,\gamma^{i-1},x_{0})$-homogeneous. Hence, we have $x_{0}\in\mathcal{L}_{\gamma^{-1},\gamma^{i},\epsilon}$, which implies $T_{i}^{j}(x_{0})=0$. It then follows that there exists constant $K$ depending only on $\mathbb{G}$ and $\Lambda$ such that
\begin{equation*}
|T^{j}|:=\sum_{i=1}^{j}T_{i}^{j}\leq |I| \leq K,
\end{equation*}
which implies the cardinality of $\{\mathcal{C}_{\eta,\gamma^{j}}^{k}(T^{j})\}$ is at most
\begin{equation*}
{j\choose K} \leq j^{K}.
\end{equation*}
This proves (1).

Next, we prove (2). Clearly, by an induction argument, (2) is an immediate corollary of Lemma \ref{covering lemma}.
\end{proof}

\subsection{Proof of Theorem \ref{main theorem for G-plurisubharmonic}}
In this subsection, we give the proof of Theorem \ref{main theorem for G-plurisubharmonic}.
\begin{proof}[Proof of Theorem \ref{main theorem for G-plurisubharmonic}]
First, we put $\gamma=\min(\gamma_{0},C_{0}^{-\frac{2}{\eta}})$, where $\gamma_{0}$ and $C_{0}$ are the constants in Lemma \ref{number of covering lemma}. Clearly, it suffices to prove (\ref{main theorem for G-plurisubharmonic equation}) when $r<\gamma$. There exists $j\in\mathbb{Z}_{+}$ such that $\gamma^{j+1}\leq r<\gamma^{j}$. By Lemma \ref{number of covering lemma}, $\mathcal{S}_{\eta,\gamma^{j}}^{k}(u)\cap B_{1}(0^{n})$ can be covered by $j^{K}(C_{1}\gamma^{-n})^{Q}(C_{0}\gamma^{-k})^{j-Q}$ balls of radius $\gamma^{j}$, which implies
\begin{equation*}
\begin{split}
\text{Vol}(B_{\gamma^{j}}(\mathcal{S}_{\eta,\gamma^{j}}^{k}(u))\cap B_{1}(0^{n})) & \leq j^{K}(C_{1}\gamma^{-n})^{Q}(C_{0}\gamma^{-k})^{j-Q}(2\gamma^{j})^{n}\\
& \leq C(n,Q,K)(\gamma^{j})^{n-k-\eta}.
\end{split}
\end{equation*}
Since $\gamma^{j+1}\leq r<\gamma^{j}$, we have $\mathcal{S}_{\eta,r}^{k}(u)\subset\mathcal{S}_{\eta,\gamma^{j}}^{k}(u)$, which implies
\begin{equation*}
\begin{split}
\text{Vol}(B_{r}(\mathcal{S}_{\eta,r}^{k}(u))\cap B_{1}(0^{n}))
& \leq\text{Vol}(B_{\gamma^{j}}(\mathcal{S}_{\eta,\gamma^{j}}^{k}(u))\cap B_{1}(0^{n}))\\
& \leq C(n,Q,K)(\gamma^{j})^{n-k-\eta}\\
& \leq C(\eta,\Lambda,\mathbb{G})r^{n-k-\eta},
\end{split}
\end{equation*}
as desired.
\end{proof}

\section{Appendix}

\subsection{Homogeneous functions}
In this subsection, we assume that homogeneity of tangents holds for $F$ and Riesz characteristic $p_{F}\geq2$. In Lemma \ref{homogeneous converge lemma}, we prove a basic property of homogeneous functions. By using this property, we give the proof of (\ref{quantitative stratification}).

\begin{lemma}\label{homogeneous converge lemma}
Let $h_{i}$ be a sequence of functions on $\mathbf{R}^{n}$. Suppose that $h_{i}$ is $k$-homogeneous at $y_{i}$ with respect to $k$-plane $V_{i}^{k}$. If $\lim_{i\rightarrow\infty}y_{i}=y$, $\lim_{i\rightarrow\infty}V_{i}^{k}=V^{k}$ and $h_{i}$ converge to $u$ in $L^{1}(B_{r}(0^{n}))$. Then there exists a function $h$ such that
\begin{enumerate}[~~~~~~(1)]
    \item $h$ is defined on $\mathbf{R}^{n}$;
    \item $h$ is $k$-homogeneous at $y$ with respect to $k$-plane $V^{k}$;
    \item $h=u$ in $B_{r}(0^{n})$.
\end{enumerate}
\end{lemma}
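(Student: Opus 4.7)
The plan is to exploit the self-similarity of each $h_i$ about $y_i$ to extend the $L^1$ convergence from $B_r(0^n)$ to all of $\mathbf{R}^n$, and then to read off the homogeneity of the limit. I will use the explicit form of the homogeneity of $h_i$ at $y_i$ (see \cite[p.\,39]{HL1}): for every $t>0$ and $w\in\mathbf{R}^n$, one has $h_i(y_i+tw)=t^{2-p}h_i(y_i+w)$ when $p>2$, and $h_i(y_i+tw)=h_i(y_i+w)+\Theta_i\log t$ when $p=2$, for some $\Theta_i\geq0$. In the $p=2$ case $\Theta_i$ is the $M$-density of $h_i$ at $y_i$, so it is controlled by $\|h_i\|_{L^1(B_r(0^n))}$; after passing to a subsequence I may assume $\Theta_i\to\Theta\in[0,\infty)$.

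The first step is to transport the homogeneity relation to the limit $u$ on $B_r(0^n)$. Testing the identity for $h_i$ against a smooth compactly supported test function $\varphi$ localised near $y$, changing variables, and passing $i\to\infty$ using the $L^1$ convergence on $B_r(0^n)$ together with $y_i\to y$, I obtain $u(y+tw)=t^{2-p}u(y+w)$ almost everywhere on the admissible region when $p>2$, and $u(y+tw)=u(y+w)+\Theta\log t$ when $p=2$. Similarly, by choosing $v_i\in V_i^k$ with $v_i\to v\in V^k$ and passing to the limit in $h_i(y_i+w+v_i)=h_i(y_i+w)$, I get the $V^k$-translation invariance $u(y+w+v)=u(y+w)$. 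Since $u$ is subharmonic as an $L^1_{\mathrm{loc}}$-limit of subharmonic functions, these a.e.\ identities upgrade to identities for the upper semicontinuous representative.

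The second step is to define the extension. For $z\in\mathbf{R}^n$ and any $s>0$ small enough that $y+s(z-y)\in B_r(0^n)$, set $h(z):=s^{p-2}u(y+s(z-y))$ when $p>2$, and $h(z):=u(y+s(z-y))-\Theta\log s$ when $p=2$. The homogeneity relation for $u$ from the first step makes this definition independent of $s$, and shows that $h$ agrees with $u$ on $B_r(0^n)$ (take $s$ close to $1$). Applying the same identity to $h_i$ and performing the change of variables $z\mapsto y_i+s(z-y_i)$, one converts an $L^1(K)$-estimate for $h_i-h$ on a compact $K\subset\mathbf{R}^n$ into an $L^1$-estimate on a small set near $y$ inside $B_r(0^n)$ (plus, when $p=2$, an error of size $|\Theta_i-\Theta|\cdot|\log s|$). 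This upgrades the convergence to $h_i\to h$ in $L^1_{\mathrm{loc}}(\mathbf{R}^n)$.

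The third step is to verify properties (1)--(3). Item (3) is built into the construction. For (1) and the subharmonicity clause of (2), $h$ is an $L^1_{\mathrm{loc}}$-limit of the subharmonic $h_i$, hence subharmonic on $\mathbf{R}^n$. The scaling identity $h_{y,r}(w)=h(w+y)$ for every $r>0$ is exactly the self-similarity encoded in the definition of $h$, and the $V^k$-translation invariance passes from the $V_i^k$-invariance of $h_i$ via $v_i\to v$. The main obstacle is the $p=2$ case, where the self-similar extension carries a logarithmic correction $\Theta_i\log s$: a uniform bound on $\Theta_i$ (coming from the $L^1$-bound on $h_i$) and the extraction of a convergent subsequence are essential, since otherwise the extension formula would not converge. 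Once this is in hand, everything else reduces to routine change-of-variable bookkeeping together with the standard theory of $L^1_{\mathrm{loc}}$-limits of subharmonic functions.
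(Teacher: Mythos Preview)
Your approach is correct and genuinely different from the paper's. The paper first normalizes to $y=0^n$, $r=1$, then splits into two cases: in Case~1 it assumes $y_i\equiv 0^n$ and $V_i^k\equiv V^k$, and shows via the identity $(h_i)_{0^n,1/R}=h_i$ that $\{h_i\}$ is Cauchy in $L^1_{\mathrm{loc}}(\mathbf{R}^n)$ (for $p=2$ this uses the stability of $M$ under $L^1$-convergence, Lemma~\ref{M S stable lemma}); in Case~2 it reduces the moving centers and planes to Case~1 by composing with orthogonal matrices $A_i\to I_n$ carrying $V^k$ to $V_i^k$. You instead pass the scaling and translation identities to $u$ directly on $B_r(0^n)$ by testing, then write down the extension $h$ by an explicit self-similar formula and verify $h_i\to h$ in $L^1_{\mathrm{loc}}$ by the same scaling. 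Your route is more constructive and avoids the reduction step via rotations; the paper's Cauchy-sequence argument is slightly cleaner in that it never needs to isolate the densities $\Theta_i$ in the $p=2$ case (it only uses $M(h_i,0^n,1/R)\to M(u,0^n,1/R)$), whereas you must extract a convergent subsequence of $\Theta_i$ and check $M(h,y,1)=0$ at the end. Both arguments rely on the same underlying inputs: continuity of translation in $L^1$ and the standard theory of $L^1_{\mathrm{loc}}$-limits of subharmonic functions.
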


\begin{proof}
Without loss of generality, we assume $y=0^{n}$ and $r=1$. We split up in to different cases.

\bigskip
\noindent
{\bf Case 1.} For any $i$, we have $y_{i}=0^{n}$ and $V_{i}^{k}=V^{k}$.

\bigskip
When $p=2$, for any $R>1$, we have
\begin{equation*}
\begin{split}
\int_{B_{R}(0^{n})}|h_{i}(x)-h_{j}(x)|dx & = \int_{B_{R}(0^{n})}|(h_{i})_{0^{n},\frac{1}{R}}(x)-(h_{j})_{0^{n},\frac{1}{R}}(x)|dx\\
& \leq \int_{B_{R}(0^{n})}|h_{i}(\frac{x}{R})-h_{j}(\frac{x}{R})|dx+\omega_{n}R^{n}|M(h_{i},0^{n},\frac{1}{R})-M(h_{j},0^{n},\frac{1}{R})|\\
& \leq R^{n}\|h_{i}-h_{j}\|_{L^{1}(B_{1}(0^{n}))}+\omega_{n}R^{n}|M(h_{i},0^{n},\frac{1}{R})-M(h_{j},0^{n},\frac{1}{R})|.
\end{split}
\end{equation*}
By Lemma \ref{M S stable lemma}, we obtain
\begin{equation}\label{homogeneous converge lemma equation 1}
\lim_{i,j\rightarrow\infty}\|h_{i}-h_{j}\|_{L^{1}(B_{R}(0^{n}))}=0.
\end{equation}
On the other hand, when $p>2$, by the similar argument, we still have (\ref{homogeneous converge lemma equation 1}).

Next, by (\ref{homogeneous converge lemma equation 1}), $h_{i}$ is a Cauchy sequence in $L_{loc}^{1}(\mathbf{R}^{n})$. There exists a function $h$ on $\mathbf{R}^{n}$ such that $h_{i}$ converge to $h$ in $L_{loc}^{1}(\mathbf{R}^{n})$. It is clear that $h=u$ in $B_{1}(0^{n})$. Now, it suffices to prove $h$ is $k$-homogeneous at $0^{n}$ with respect to $V^{k}$. For any $r>0$, we have $(h_{i})_{0^{n},r}=h_{i}$. Letting $i\rightarrow\infty$, we obtain $h$ is $k$-homogeneous. Since $h_{i}$ is homogeneous at $0^{n}$ with respect to $V^{k}$, then for any $x\in\mathbf{R}^{n}$ and $v\in V^{k}$, by the property of subharmonic functions, we have
\begin{equation*}
\begin{split}
h(x+v)-h(x)
& = \lim_{s\rightarrow 0}\frac{1}{\omega_{n}s^{n}}\left(\int_{B_{s}(x+v)}h(y)dy-\int_{B_{s}(x)}h(y)dy\right)\\
& = \lim_{s\rightarrow 0}\lim_{i\rightarrow\infty}\frac{1}{\omega_{n}s^{n}}\left(\int_{B_{s}(x+v)}h_{i}(y)dy-\int_{B_{s}(x)}h_{i}(y)dy\right)\\
& = 0,
\end{split}
\end{equation*}
as desired.

\bigskip
\noindent
{\bf Case 2.} General case.

\bigskip
Since $\lim_{i\rightarrow\infty}V_{i}^{k}=V^{k}$, there exists a sequence of $n\times n$ orthogonal matrices $A_{i}$ such that $V_{i}^{k}=A_{i}V^{k}$ and $\lim_{i\rightarrow\infty}A_{i}=I_{n}$, where $I_{n}$ is the $n\times n$ identity matrix. We define $\tilde{h}_{i}(x)=h_{i}(A_{i}x+y_{i})$, which implies that $\tilde{h}_{i}$ is $k$-homogeneous at $0^{n}$ with respect to $V^{k}$. For any $r\in [\frac{1}{2},1)$, we compute
\begin{equation}\label{homogeneous converge lemma equation 2}
\begin{split}
&~~\int_{B_{r}(0^{n})}|\tilde{h}_{i}(x)-u(x)|dx\\
\leq &~~ \int_{B_{r}(0^{n})}|h_{i}(A_{i}x+y_{i})-u(A_{i}x+y_{i})|dx+\int_{B_{r}(0^{n})}|u(A_{i}x+y_{i})-u(x)|dx\\
\leq &~~ \int_{B_{r}(y_{i})}|h_{i}(x)-u(x)|dx+\int_{B_{r}(0^{n})}|u(A_{i}x+y_{i})-u(x)|dx\\
\rightarrow &~~ 0,
\end{split}
\end{equation}
where we used $h_{i}$ converge to $u$ in $L^{1}(B_{1}(0^{n}))$ and Lemma \ref{translation lemma}. By Case 1, (\ref{homogeneous converge lemma equation 2}) and scaling argument, for each $r\in[\frac{1}{2},1)$, there exists a function $h^{(r)}$ such that
\begin{enumerate}[~~~~~~(1)]
    \item $h^{r}$ is defined on $\mathbf{R}^{n}$;
    \item $h^{r}$ is $k$-homogeneous at $0^{n}$ with respect to $k$-plane $V^{k}$;
    \item $h^{r}=u$ in $B_{r}(0^{n})$.
\end{enumerate}
By (2) and (3), we have
\begin{equation*}
h^{(r)}=h^{(\frac{1}{2})} ~\text{~in $\mathbf{R}^{n}$}.
\end{equation*}
Hence, $h^{(\frac{1}{2})}$ is the desired function.
\end{proof}

\begin{proposition}\label{quantitative stratification proposition}
If homogeneity of tangents holds for $F$, then for any $F$-subharmonic function $u$ on $B_{2}(0^{n})$, we have
\begin{equation*}
\mathcal{S}^{k}(u)=\bigcup_{\eta}\mathcal{S}_{\eta}^{k}(u)=\bigcup_{\eta}\bigcap_{r}\mathcal{S}_{\eta,r}^{k}(u).
\end{equation*}
\end{proposition}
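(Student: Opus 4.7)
The identity $\mathcal{S}_\eta^k(u)=\bigcap_{r>0}\mathcal{S}_{\eta,r}^k(u)$ is an immediate consequence of the definitions, since the intervals $[r,1)$ exhaust $(0,1)$ as $r\downarrow 0$; this reduces the proposition to establishing the equality $\mathcal{S}^k(u)=\bigcup_\eta \mathcal{S}_\eta^k(u)$.

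For the inclusion $\bigcup_\eta \mathcal{S}_\eta^k(u)\subset \mathcal{S}^k(u)$, I argue contrapositively. If some $\varphi\in T_x(u)$ is $(k+1)$-homogeneous, pick a sequence $s_i\to 0$ with $u_{x,s_i}\to \varphi$ in $L_{loc}^{1}(\mathbf{R}^{n})$. Then $\|u_{x,s_i}-\varphi\|_{L^{1}(B_{1}(0^{n}))}$ drops below any prescribed $\eta>0$ for large $i$, so $u$ is $(k+1,\eta,s_i,x)$-homogeneous, which exhibits $x\notin \mathcal{S}_\eta^k(u)$ for every $\eta>0$.

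The reverse inclusion $\mathcal{S}^k(u)\subset\bigcup_\eta\mathcal{S}_\eta^k(u)$ will be proved by contradiction. Suppose $x\in\mathcal{S}^k(u)$ but $x\notin\bigcup_\eta\mathcal{S}_\eta^k(u)$. For each $i\in\mathbb{Z}_+$ one then finds $s_i\in(0,1)$, a $(k+1)$-plane $V_i^{k+1}$, and a function $h_i$ that is $(k+1)$-homogeneous with respect to $V_i^{k+1}$, satisfying $\|u_{x,s_i}-h_i\|_{L^{1}(B_{1}(0^{n}))}<1/i$. Pass to subsequences so that $V_i^{k+1}\to V^{k+1}$ in the Grassmannian and $s_i\to s^*\in[0,1]$. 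In the case $s^*=0$, Lemma \ref{F subharmonic converge lemma} furnishes $\varphi\in T_x(u)$ with $u_{x,s_i}\to \varphi$ in $L_{loc}^{1}(\mathbf{R}^{n})$ along a further subsequence, so $h_i\to \varphi$ in $L^{1}(B_{1}(0^{n}))$, and Lemma \ref{homogeneous converge lemma} produces a $(k+1)$-homogeneous function $h$ on $\mathbf{R}^n$ with $h=\varphi$ on $B_{1}(0^{n})$; since homogeneity of tangents holds, $\varphi_{0^{n},r}=\varphi$ for every $r>0$ and $h$ obeys the same scaling identity, so unpacking Definition \ref{definition of tangential p-flow} (the cases $p>2$ and $p=2$ being treated separately because of the additive term $M(u,x,r)$ in the $2$-flow) propagates the equality $\varphi=h$ from $B_{1}(0^{n})$ to all of $\mathbf{R}^n$ by one rescaling, so the tangent $\varphi$ is $(k+1)$-homogeneous, contradicting $x\in\mathcal{S}^k(u)$. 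In the case $s^*>0$, the standard $L_{loc}^{1}$-continuity of rescaling gives $u_{x,s_i}\to u_{x,s^*}$, hence $h_i\to u_{x,s^*}$ in $L^{1}(B_{1}(0^{n}))$, and Lemma \ref{homogeneous converge lemma} again supplies a $(k+1)$-homogeneous $h$ with $u_{x,s^*}=h$ a.e.\ on $B_{1}(0^{n})$; the identity $u_{x,s^*r}=(u_{x,s^*})_{0^{n},r}$ together with the scaling invariance of $h$ forces $u_{x,s^*/j}=h$ on $B_{j}(0^{n})$ for every $j\in\mathbb{Z}_+$, whence $h\in T_x(u)$ is a $(k+1)$-homogeneous tangent, again contradicting $x\in\mathcal{S}^k(u)$.

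The main delicate step is the scaling-propagation argument used in both subcases, namely upgrading $L^1$-agreement on $B_{1}(0^{n})$ of two functions that are individually fixed by the tangential $p$-flow to agreement on all of $\mathbf{R}^n$. The computation itself is elementary, but it must branch on the value of $p$ because of the additive $M(u,x,r)$ correction in the $p=2$ flow; once that branching is dispensed with, the rest of the argument is a routine compactness-and-extraction scheme built on Lemmas \ref{F subharmonic converge lemma} and \ref{homogeneous converge lemma}.
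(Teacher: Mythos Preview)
Your proposal is correct and follows essentially the same approach as the paper: both argue the nontrivial inclusion $\mathcal{S}^{k}(u)\subset\bigcup_\eta\mathcal{S}_\eta^{k}(u)$ by contradiction, extract $(k+1)$-homogeneous approximants $h_i$ at scales $s_i$, pass to a subsequential limit, and split into the cases $s^*=0$ and $s^*>0$, invoking Lemma~\ref{homogeneous converge lemma} in each. The only cosmetic difference is in the $s^*>0$ case: where you iterate the semigroup identity $u_{x,s^* r}=(u_{x,s^*})_{0^n,r}$ to exhibit $h$ directly as an element of $T_x(u)$, the paper phrases the same computation as the chain $T_x(u)=T_{0^n}(u_{x,s^*})=T_{0^n}(h)=\{h\}$; these are the same argument.
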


\begin{proof}
For any $\eta>0$, by definition, we have $\mathcal{S}_{\eta}^{k}(u)=\bigcap_{r}\mathcal{S}_{\eta,r}^{k}(u)$ and $\mathcal{S}_{\eta}^{k}(u)\subset\mathcal{S}^{k}(u)$. It suffices to prove $\mathcal{S}^{k}(u)\subset\bigcup_{\eta}\mathcal{S}_{\eta}^{k}(u)$. We argue by contradiction, assuming that $\mathcal{S}^{k}(u)\nsubseteq\bigcup_{\eta}\mathcal{S}_{\eta}^{k}(u)$. Then there exists a point $x\in B_{2}(0^{n})$ such that
\begin{enumerate}[~~~~~~(1)]
    \item $x\in\mathcal{S}^{k}(u)$;
    \item For each $i\in\mathbf{Z}_{+}$, there exists a $(k+1)$-homogeneous function $h_{i}$ and $r_{i}\in(0,1)$ such that
          \begin{equation*}
          \int_{B_{1}(0^{n})}|u_{x,r_{i}}(y)-h_{i}(y)|dy < i^{-1}.
          \end{equation*}
\end{enumerate}
By the compactness of subharmonic functions, after passing to a subsequence, we assume
\begin{equation}\label{quantitative stratification proposition equation 1}
\lim_{i\rightarrow\infty}r_{i}=r, ~~\lim_{i\rightarrow\infty}\|h_{i}-h\|_{L^{1}(B_{\frac{1}{2}}(0^{n}))}=0 ~\textit{~and~}~
\lim_{i\rightarrow\infty}\|u_{x,r_{i}}-h\|_{L^{1}(B_{\frac{1}{2}}(0^{n}))}=0.
\end{equation}

If $r=0$, by the definition of tangent (see \cite[Definition 9.3, Proposition 9.4]{HL1}), there exists $U\in T_{x}(u)$ such that $u_{x,r_{i}}$ converge to $U$ in $L_{loc}^{1}(\mathbf{R}^{n})$. Combining this and (\ref{quantitative stratification proposition equation 1}), we have $U=h$ in $B_{\frac{1}{2}}(0^{n})$. On the other hand, since homogeneity of tangents holds for $F$, $U$ is $0$-homogeneous. By Lemma \ref{homogeneous converge lemma}, there exists a $(k+1)$-plane $V^{k+1}$ such that $h$ is $(k+1)$-homogeneous with respect to $V^{k+1}$. By Definition \ref{homogeneous definition}, we get $U=h$ is $(k+1)$-homogeneous, which contradicts with $x\in\mathcal{S}^{k}(u)$.

If $r>0$, by Lemma \ref{translation lemma}, $h=u_{x,r}$ in $B_{\frac{1}{2}}(0^{n})$. By the definition of tangent set, we have $T_{x}(u)=T_{0^{n}}(u_{x,r})=T_{0^{n}}(h)$. By Lemma \ref{homogeneous converge lemma}, $h$ is a $(k+1)$-homogeneous function, which implies $T_{0^{n}}(h)=\{h\}$, which contradicts with $x\in\mathcal{S}^{k}(u)$.
\end{proof}

\subsection{$K_{p}$-convex functions}
In this subsection, we recall some properties of $K_{p}$-convex functions, where $K_{p}$ is the classical $p^{th}$ Riesz kernel (see \cite[(1.1)]{HL1}).

\begin{lemma}\label{almost implies every lemma}
Let $\{f_{i}\}$ be a sequence of $K_{p}$-convex functions on $(0,R)$. If $\lim_{i\rightarrow\infty}f_{i}(r)=f(r)$ for almost every $r\in(0,R)$, then we have $\lim_{i\rightarrow\infty}f_{i}(r)=f(r)$ for every $r\in(0,R)$.
\end{lemma}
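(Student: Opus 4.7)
The plan is to reduce to the classical fact that a sequence of convex functions on an open interval, converging pointwise on a dense subset, must converge pointwise everywhere (and the limit is itself convex, hence continuous). Since $K_{p}$ is strictly monotone on $(0,R)$, composition with $K_{p}^{-1}$ turns the $K_{p}$-convex functions $f_{i}$ into genuinely convex functions on $K_{p}((0,R))$, so it is enough to prove the convex statement. Let $E \subset (0,R)$ denote the full-measure set on which $f_{i}(r) \to f(r)$; then $E$ is dense, and taking limits in the $K_{p}$-convexity inequality along triples in $E$ shows that $f|_{E}$ is itself $K_{p}$-convex on $E$.

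Because the monotonicity of $K_{p}$-difference quotients forces $f|_{E}$ to be locally Lipschitz (in the $K_{p}$-parametrization) on compact subsets of $(0,R)$, it extends uniquely to a continuous $K_{p}$-convex function $\bar f$ on all of $(0,R)$ with $\bar f = f$ on $E$. This extension step is the main technical point, but it is classical once the difference quotients are controlled.

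Now fix $r_{0} \in (0,R)$ and choose sequences $a_{k}^{-} < a_{k} \uparrow r_{0}$ and $b_{k} \downarrow r_{0}$, all in $E$ (possible by density). The convex-combination form of $K_{p}$-convexity gives
\[
f_{i}(r_{0}) \le \frac{K_{p}(b_{k}) - K_{p}(r_{0})}{K_{p}(b_{k}) - K_{p}(a_{k})}\, f_{i}(a_{k}) + \frac{K_{p}(r_{0}) - K_{p}(a_{k})}{K_{p}(b_{k}) - K_{p}(a_{k})}\, f_{i}(b_{k}),
\]
and the monotonicity of $K_{p}$-difference quotients yields
\[
f_{i}(r_{0}) \ge f_{i}(a_{k}) + \frac{f_{i}(a_{k}) - f_{i}(a_{k}^{-})}{K_{p}(a_{k}) - K_{p}(a_{k}^{-})}\bigl(K_{p}(r_{0}) - K_{p}(a_{k})\bigr).
\]
Letting $i \to \infty$ (using $a_{k}, b_{k}, a_{k}^{-} \in E$) converts these into bounds on $\limsup_{i} f_{i}(r_{0})$ and $\liminf_{i} f_{i}(r_{0})$ in terms of $\bar f$ at nearby points. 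Letting $k \to \infty$, continuity of $\bar f$ collapses the upper bound to $\bar f(r_{0})$; in the lower bound, the difference quotient stays bounded (by any fixed reference quotient to the left of $r_{0}$, by monotonicity) while $K_{p}(r_{0}) - K_{p}(a_{k}) \to 0$, so the lower bound also collapses to $\bar f(r_{0})$. Thus $\lim_{i} f_{i}(r_{0}) = \bar f(r_{0})$ for every $r_{0} \in (0,R)$, and redefining $f := \bar f$ on the null set gives $f_{i} \to f$ pointwise everywhere, as claimed.
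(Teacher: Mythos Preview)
Your proof is correct and rests on the same underlying fact as the paper's: the monotonicity of $K_{p}$-difference quotients gives uniform local Lipschitz control on the $f_{i}$. The execution differs, however. The paper argues more directly: for a given $r$ it fixes four points $s_{1}<s_{2}<r<s_{3}<s_{4}$ in the convergence set $E$, reads off a uniform Lipschitz constant $L$ for all $f_{i}$ on $[s_{2},s_{3}]$ from the two outer secants, picks $\tilde r\in E$ with $|\tilde r-r|\le\epsilon$, and finishes with the one-line triangle inequality
\[
|f_{i}(r)-f(r)|\le |f_{i}(r)-f_{i}(\tilde r)|+|f_{i}(\tilde r)-f(\tilde r)|+|f(\tilde r)-f(r)|\le 2L\epsilon+|f_{i}(\tilde r)-f(\tilde r)|.
\]
Your route---reducing to genuine convexity via $K_{p}^{-1}$, extending $f|_{E}$ to a continuous $K_{p}$-convex $\bar f$, and then sandwiching $f_{i}(r_{0})$ between explicit secant bounds---is longer but has the merit of making the structure of the limit explicit before proving convergence to it; your final remark that one must identify $f$ with its continuous representative $\bar f$ on the null set is a point the paper's proof leaves implicit.
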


\begin{proof}
For any $\epsilon>0$ and $r\in(0,R)$, by assumption, there exists $0<s_{1}<s_{2}<r<s_{3}<s_{4}<R$ such that
\begin{equation}\label{almost implies every lemma equation1}
\lim_{i\rightarrow\infty}f_{i}(s_{j})=f(s_{j}) \text{~for $j=1,2,3,4$.}
\end{equation}
By the definition of $K_{p}$-convex functions, for any $r_{1},r_{2}\in(s_{2},s_{3})$, we have
\begin{equation}\label{almost implies every lemma equation2}
\frac{f_{i}(s_{2})-f_{i}(s_{1})}{K_{p}(s_{2})-K_{p}(s_{1})}
\leq\frac{f_{i}(r_{2})-f_{i}(r_{1})}{K_{p}(r_{2})-K_{p}(r_{1})}
\leq\frac{f_{i}(s_{4})-f_{i}(s_{3})}{K_{p}(s_{4})-K_{p}(s_{3})}.
\end{equation}
Combining (\ref{almost implies every lemma equation1}) and (\ref{almost implies every lemma equation2}), we obtain that $f_{i}$ and $f$ are Lipschitz functions on $[s_{2},s_{3}]$ with uniform Lipschitz constant $L(s_{1},s_{2},s_{3},s_{4},f,p)$. We can choose $\tilde{r}\in(s_{2},s_{3})$ such that $|\tilde{r}-r|\leq\epsilon$ and $\lim_{i\rightarrow\infty}f_{i}(\tilde{r})=f(\tilde{r})$. It then follows that for $i$ sufficiently large, we have
\begin{equation*}
\begin{split}
|f_{i}(r)-f(r)| &\leq |f_{i}(r)-f_{i}(\tilde{r})|+|f_{i}(\tilde{r})-f(\tilde{r})|+|f(\tilde{r})-f(r)|\\
&\leq 2L\epsilon+|f_{i}(\tilde{r})-f(\tilde{r})|\\
&\leq (2L+1)\epsilon,
\end{split}
\end{equation*}
which implies $\lim_{i\rightarrow\infty}f_{i}(r)=f(r)$. Since $r$ is arbitrary, we complete the proof.
\end{proof}

\begin{lemma}\label{convex converge lemma}
Let $\{f_{i}\}$ be a sequence of $K_{p}$-convex functions on $(0,R)$. If $\lim_{i\rightarrow\infty}f_{i}(r)=f(r)$ for every $r\in(0,R)$, then we have
\begin{equation*}
\lim_{i\rightarrow\infty}\frac{(f_{i})_{\pm}'(r)}{K_{p}'(r)}=\frac{f'(r)}{K_{p}'(r)},
\end{equation*}
for almost every $r\in(0,R)$.
\end{lemma}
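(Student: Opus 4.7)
The plan is to reduce the statement to the classical fact that a pointwise limit of convex functions has its one-sided derivatives converging to the derivative of the limit at every point of differentiability, by means of the change of variable $s=K_{p}(r)$.

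First, recall that $K_{p}:(0,R)\to K_{p}((0,R))$ is smooth and strictly increasing (for $p=2$ it is $\log r$, for $p>2$ it is a positive multiple of $-r^{2-p}$), so it has a smooth inverse. By definition, a function $\varphi$ on $(0,R)$ is $K_{p}$-convex if and only if $\varphi\circ K_{p}^{-1}$ is convex on $K_{p}((0,R))$. Set $g_{i}(s):=f_{i}(K_{p}^{-1}(s))$ and $g(s):=f(K_{p}^{-1}(s))$. Then each $g_{i}$ is convex, $g$ is convex, and the hypothesis $f_{i}\to f$ pointwise on $(0,R)$ translates to $g_{i}\to g$ pointwise on $K_{p}((0,R))$.

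Next, by the chain rule applied to one-sided derivatives of the composition $f_{i}=g_{i}\circ K_{p}$ (using that $K_{p}$ is smooth and strictly increasing, hence $K_{p}'(r)>0$),
\begin{equation*}
(f_{i})_{\pm}'(r)=(g_{i})_{\pm}'(K_{p}(r))\,K_{p}'(r),
\qquad
\frac{(f_{i})_{\pm}'(r)}{K_{p}'(r)}=(g_{i})_{\pm}'(K_{p}(r)),
\end{equation*}
and similarly $f'(r)/K_{p}'(r)=g'(K_{p}(r))$ whenever $g$ is differentiable at $K_{p}(r)$.

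The final step is the classical convex-functions lemma: if convex functions $g_{i}$ on an open interval converge pointwise to a convex function $g$, then $(g_{i})_{\pm}'(s)\to g'(s)$ at every point $s$ where $g$ is differentiable. This follows from the monotonicity of difference quotients of convex functions together with the standard squeeze using
\begin{equation*}
\frac{g_{i}(s)-g_{i}(s-h)}{h}\leq (g_{i})_{-}'(s)\leq (g_{i})_{+}'(s)\leq \frac{g_{i}(s+h)-g_{i}(s)}{h}
\end{equation*}
and taking $i\to\infty$ then $h\to 0$ (using differentiability of $g$ at $s$). Since $g$ is convex it is differentiable outside a set $N\subset K_{p}((0,R))$ of Lebesgue measure zero. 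Because $K_{p}$ is a $C^{1}$ diffeomorphism, the preimage $K_{p}^{-1}(N)\subset(0,R)$ also has measure zero, so for a.e. $r\in(0,R)$ the point $K_{p}(r)$ lies outside $N$ and the displayed identity gives
\begin{equation*}
\lim_{i\to\infty}\frac{(f_{i})_{\pm}'(r)}{K_{p}'(r)}=\lim_{i\to\infty}(g_{i})_{\pm}'(K_{p}(r))=g'(K_{p}(r))=\frac{f'(r)}{K_{p}'(r)}.
\end{equation*}

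The only slightly delicate point is the classical squeeze step for one-sided derivatives; everything else is a bookkeeping change of variable. I expect no real obstacle beyond verifying the equivalence between $K_{p}$-convexity and ordinary convexity after composing with $K_{p}^{-1}$, which is immediate from the definition used in the paper.
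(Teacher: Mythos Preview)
Your proposal is correct and is essentially the same argument as the paper's, just repackaged via the change of variable $s=K_{p}(r)$. The paper works directly with $K_{p}$-difference quotients: it fixes a point $r_{0}$ where $f$ is differentiable, chooses a small increment so that the one-sided $K_{p}$-difference quotients of $f$ are within $\epsilon$ of $f'(r_{0})/K_{p}'(r_{0})$, passes to the limit in $i$ for the same quotients of $f_{i}$, and then squeezes using the monotonicity inequality $(f_{i})_{-}'/K_{p}'\le (f_{i})_{+}'/K_{p}'$. Your substitution $g_{i}=f_{i}\circ K_{p}^{-1}$ turns these $K_{p}$-difference quotients into ordinary difference quotients of convex functions, after which your displayed squeeze is line-for-line the paper's computation; the measure-zero transfer through the $C^{1}$ diffeomorphism $K_{p}$ is the only extra bookkeeping your route requires.
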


\begin{proof}
Since $f_{i}$ are $K_{p}$-convex functions and $f=\lim_{i\rightarrow\infty}f_{i}$, it is clear that $f$ is also $K_{p}$-convex function. As a result, we obtain $f$ is differentiable almost everywhere in $(0,R)$. For any $r_{0}\in(0,R)$ at which $f$ is differentiable and for any $\epsilon>0$, there exists $r>0$ such that
\begin{equation*}
\frac{f'(r_{0})}{K_{p}'(r_{0})}-\epsilon\leq\frac{f(r_{0})-f(r_{0}-r)}{K_{p}(r_{0})-K_{p}(r_{0}-r)}
\leq\frac{f(r_{0}+r)-f(r_{0})}{K_{p}(r_{0}+r)-K_{p}(r_{0})}\leq\frac{f'(r_{0})}{K_{p}'(r_{0})}+\epsilon.
\end{equation*}
Then there exists $N>0$ such that for any $i\geq N$, we have
\begin{equation*}
\frac{f'(r_{0})}{K_{p}(r_{0})}-2\epsilon\leq\frac{f(r_{0})-f(r_{0}-r)}{K_{p}(r_{0})-K_{p}(r_{0}-r)}-\epsilon\leq\frac{f_{i}(r_{0})-f_{i}(r_{0}-r)}{K_{p}(r_{0})-K_{p}(r_{0}-r)}\leq\frac{(f_{i})_{-}'(r_{0})}{K_{p}'(r_{0})}
\end{equation*}
and
\begin{equation*}
\frac{(f_{i})_{+}'(r_{0})}{K_{p}'(r_{0})}\leq\frac{f_{i}(r_{0}+r)-f_{i}(r_{0})}{K_{p}(r_{0}+r)-K_{p}(r_{0})}\leq\frac{f(r_{0}+r)-f(r_{0})}{K_{p}(r_{0}+r)-K_{p}(r_{0})}+\epsilon\leq\frac{f'(r_{0})}{K_{p}(r_{0})}+2\epsilon.
\end{equation*}
Combining with
\begin{equation*}
\frac{(f_{i})_{-}'(r_{0})}{K_{p}'(r_{0})}\leq\frac{(f_{i})_{+}'(r_{0})}{K_{p}'(r_{0})},
\end{equation*}
we complete the proof.
\end{proof}

\subsection{Subharmonic function in $\mathbf{R}^{p}$}
In this subsection, we recall some properties of subharmonic functions.
\begin{lemma}\label{p-plane lemma}
Let $v$ be a subharmonic function on $B_{R}(0^{p})\subset\mathbf{R}^{p}$ with $\|v\|_{L^{1}(B_{b}(0^{p})\setminus(B_{a}(0^{p}))}\leq\Lambda$, where $0<a<b<R$. Then for any $t\in(a+d,b-d)$, where $d>0$, there exists a constant $C(t,a,d)$ such that
\begin{equation*}
M(v,0^{p},t)\geq S(v,0^{p},t)\geq-C(t,a,d)\Lambda,
\end{equation*}
where $0^{p}$ is the origin in $\mathbf{R}^{p}$.
\end{lemma}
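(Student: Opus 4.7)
The first inequality $M(v,0^{p},t)\geq S(v,0^{p},t)$ is immediate: by the definitions, $M(v,0^{p},t)=\sup_{y\in B_{1}(0^{p})}v(ty)$ is the supremum of $v$ over the closed ball $\overline{B_{t}(0^{p})}$, while $S(v,0^{p},t)$ is the average of $v$ over $\partial B_{t}(0^{p})\subset\overline{B_{t}(0^{p})}$, so the inequality is clear. The nontrivial part is the lower bound on $S(v,0^{p},t)$.

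The plan is to combine two classical ingredients. First, since $v$ is subharmonic on the connected open set $B_{R}(0^{p})$ containing the origin, the spherical mean $r\mapsto S(v,0^{p},r)$ is nondecreasing on $(0,R)$. Second, by converting the integral of $v$ over the annulus $B_{t}(0^{p})\setminus B_{a}(0^{p})$ into an iterated integral over concentric spheres (Fubini/coarea), I obtain the identity
\begin{equation*}
\int_{a}^{t} r^{p-1}\, S(v,0^{p},r)\, dr \;=\; \frac{1}{p\omega_{p}}\int_{B_{t}(0^{p})\setminus B_{a}(0^{p})}v(z)\,dz.
\end{equation*}

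The right-hand side is bounded below by $-\Lambda/(p\omega_{p})$ using the hypothesis $\|v\|_{L^{1}(B_{b}(0^{p})\setminus B_{a}(0^{p}))}\leq\Lambda$ together with $[a,t]\subset[a,b-d]\subset[a,b]$. On the other hand, monotonicity of $S(v,0^{p},\cdot)$ yields the upper bound
\begin{equation*}
\int_{a}^{t} r^{p-1}\, S(v,0^{p},r)\, dr \;\leq\; S(v,0^{p},t)\cdot\frac{t^{p}-a^{p}}{p}.
\end{equation*}
Chaining these two estimates and rearranging gives
\begin{equation*}
S(v,0^{p},t)\;\geq\;-\frac{\Lambda}{\omega_{p}(t^{p}-a^{p})},
\end{equation*}
and since $t\in(a+d,b-d)$ forces $t^{p}-a^{p}\geq (a+d)^{p}-a^{p}>0$, the prefactor is a constant depending only on $t,a,d$ (in fact only on $a,d$). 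Setting $C(t,a,d)=1/\bigl(\omega_{p}(t^{p}-a^{p})\bigr)$ completes the argument.

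There is no genuine obstacle: the only small point to watch is that the $L^{1}$ bound on the annulus is applied on the subannulus $B_{t}\setminus B_{a}\subset B_{b}\setminus B_{a}$, and that $v\not\equiv-\infty$ (guaranteed by the $L^{1}$ bound) so that $S(v,0^{p},r)$ is finite and genuinely nondecreasing rather than identically $-\infty$. The constant is harmless since the lemma allows dependence on $t,a,d$.
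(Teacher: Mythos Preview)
Your proof is correct and follows essentially the same strategy as the paper: integrate the spherical means over an annulus in polar coordinates, use that $r\mapsto S(v,0^{p},r)$ is nondecreasing to pull out $S(v,0^{p},t)$, and control the annular integral by the $L^{1}$ bound. Your version is in fact slightly more streamlined: the paper first invokes the submean value property to produce an auxiliary upper bound $\tilde{C}$ for $v$ on $B_{b-d}\setminus B_{a+d}$ and then works with the nonnegative integrand $\tilde{C}-v$, but $\tilde{C}$ cancels in the end and the final constant is $\Lambda/\bigl(\omega_{p}(t^{p}-(a+d)^{p})\bigr)$; you bypass this detour and obtain the marginally sharper $\Lambda/\bigl(\omega_{p}(t^{p}-a^{p})\bigr)$.
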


\begin{proof}
It suffices to prove $S(v,0^{p},t)\geq-C(t,a,d)\Lambda$. First, by the submean value property of subharmonic functions, we have
\begin{equation*}
\sup_{B_{b-d}(0^{p})\setminus B_{a+d}(0^{p})}v\leq\tilde{C}(d,\Lambda).
\end{equation*}
Thus, we compute
\begin{equation*}
\begin{split}
\int_{B_{t}(0^{p})\setminus B_{a+d}(0^{p})}|\tilde{C}-v(x)|dx & = \int_{B_{t}(0^{p})\setminus B_{a+d}(0^{p})}\left(\tilde{C}-v(x)\right)dx\\
& =\int_{a+d}^{t}p\omega_{p}s^{p-1}\left(\tilde{C}-S(v,0^{p},s)\right)ds\\
& \geq \left(\tilde{C}-S(v,0^{p},t)\right)\omega_{p}\left(t^{p}-(a+d)^{p}\right),
\end{split}
\end{equation*}
where $\omega_{p}$ is the volume of unit ball in $\mathbf{R}^{p}$. It is clear that
\begin{equation*}
\begin{split}
\left(\tilde{C}-S(v,0^{p},t)\right)\omega_{p}\left(t^{p}-(a+d)^{p}\right) & \leq \|\tilde{C}-v\|_{L^{1}(B_{t}(0^{p})\setminus B_{a+d}(0^{p}))}\\
& \leq \tilde{C}\omega_{p}\left(t^{p}-(a+d)^{p}\right)+\Lambda.
\end{split}
\end{equation*}
Hence, we obtain
\begin{equation*}
S(v,0^{p},t)\geq-\frac{\Lambda}{\omega_{p}\left(t^{p}-(a+d)^{p}\right)}.
\end{equation*}
\end{proof}

\begin{lemma}\label{M S stable lemma}
Let $v_{i}$ and $v$ be subharmonic functions on $B_{R}(0^{p})\subset\mathbf{R}^{p}$. If $v_{i}$ converge to $v$ in $L^{1}(B_{b}(0^{p})\setminus B_{a}(0^{p}))$, where $0<a<b<R$, then for any $r\in(a,b)$, we have
\begin{equation}\label{M S stable lemma equation1}
\lim_{i\rightarrow\infty}M(v_{i},0^{p},r)=M(v,0^{p},r)
\end{equation}
and
\begin{equation}\label{M S stable lemma equation2}
\lim_{i\rightarrow\infty}S(v_{i},0^{p},r)=S(v,0^{p},r).
\end{equation}
\end{lemma}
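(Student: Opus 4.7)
The plan is to handle convergence (\ref{M S stable lemma equation2}) of the spherical average $S$ via a Fubini argument combined with Lemma \ref{almost implies every lemma}, and to establish convergence (\ref{M S stable lemma equation1}) of the maximum $M$ via separate $\limsup$ and $\liminf$ arguments built on the submean value property together with the $K_{p}$-convexity of $M(v_{i},0^{p},\cdot)$. For (\ref{M S stable lemma equation2}), apply Fubini to $v_{i}\to v$ in $L^{1}(B_{b}(0^{p})\setminus B_{a}(0^{p}))$: for almost every $s\in(a,b)$, $v_{i}|_{\partial B_{s}(0^{p})}\to v|_{\partial B_{s}(0^{p})}$ in $L^{1}(\partial B_{s}(0^{p}))$, which yields $S(v_{i},0^{p},s)\to S(v,0^{p},s)$ for a.e.\ $s\in(a,b)$. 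Since each $S(v_{i},0^{p},\cdot)$ is $K_{p}$-convex, Lemma \ref{almost implies every lemma} upgrades a.e.\ convergence on $(a,b)$ to pointwise convergence at every $s\in(a,b)$, and in particular at $s=r$.

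For (\ref{M S stable lemma equation1}), first prove $\limsup_{i}M(v_{i},0^{p},r)\leq M(v,0^{p},r)$. By the maximum principle one may take $M(v_{i},0^{p},r)=\max_{|y|=r}v_{i}(y)$, and by upper semicontinuity of $v_{i}$ the max is attained at some $y_{i}\in\partial B_{r}(0^{p})$. Passing to a subsequence so that $y_{i}\to y^{\ast}$ with $|y^{\ast}|=r$, fix $\rho>0$ with $B_{\rho}(y^{\ast})\subset B_{b}(0^{p})\setminus B_{a}(0^{p})$; for large $i$, $B_{\rho}(y_{i})$ also lies in the annulus, so the submean value inequality $v_{i}(y_{i})\leq V(v_{i},y_{i},\rho)$ combined with a routine estimate (exactly as in Lemma \ref{singularity preserve lemma}) gives $V(v_{i},y_{i},\rho)\to V(v,y^{\ast},\rho)$. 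Letting $\rho\to0$ and using $V(v,y^{\ast},\rho)\to v(y^{\ast})$ (by subharmonicity and upper semicontinuity of $v$) yields $\limsup_{i}v_{i}(y_{i})\leq v(y^{\ast})\leq M(v,0^{p},r)$.

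The lower bound $\liminf_{i}M(v_{i},0^{p},r)\geq M(v,0^{p},r)$ is the main obstacle, since the natural construction produces control only at a slightly larger radius. Select a maximizer $y^{\ast}\in\partial B_{r}(0^{p})$ with $v(y^{\ast})=M(v,0^{p},r)$. For small $\rho>0$ with $B_{\rho}(y^{\ast})\subset B_{b}\setminus B_{a}$, combine the submean value inequality for $v$, the convergence of volume averages, and the inclusion $B_{\rho}(y^{\ast})\subset B_{r+\rho}(0^{p})$ to get
$$M(v,0^{p},r)=v(y^{\ast})\leq V(v,y^{\ast},\rho)=\lim_{i}V(v_{i},y^{\ast},\rho)\leq\liminf_{i}M(v_{i},0^{p},r+\rho).$$
To replace $r+\rho$ by $r$ uniformly in $i$, I invoke that each $M(v_{i},0^{p},\cdot)$ is $K_{p}$-convex and admits, on any compact sub-interval of $(a,b)$, uniform upper bounds (from the submean value inequality and the uniform $L^{1}$-bound on the annulus) and uniform lower bounds (from Lemma \ref{p-plane lemma}, using $M\geq S$). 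These produce a Lipschitz estimate $M(v_{i},0^{p},r+\rho)\leq M(v_{i},0^{p},r)+L\rho$ with $L$ independent of $i$; sending $\rho\to0$ gives $M(v,0^{p},r)\leq\liminf_{i}M(v_{i},0^{p},r)$, which together with the $\limsup$ bound completes (\ref{M S stable lemma equation1}).
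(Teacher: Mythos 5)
Your proof of (\ref{M S stable lemma equation2}) is essentially the paper's: a.e.\ convergence of the spherical means on $(a,b)$ (the paper gets it via Fatou, you via Fubini), upgraded to every radius through $K_{p}$-convexity and Lemma \ref{almost implies every lemma}. Your treatment of (\ref{M S stable lemma equation1}), however, takes a genuinely different route. The paper proves the upper bound by mollification ($v_{i}\leq v_{i}*\phi_{\delta}$ and convergence of the mollifications) and the lower bound by contradiction: if $\liminf_{i}M(v_{i},0^{p},r)<d<M(v,0^{p},r)$, then $v_{i_{k}}\leq d$ on $B_{r}(0^{p})$ for large $k$, hence $v\leq d$ on $B_{r}(0^{p})\setminus B_{a}(0^{p})$ by the $L^{1}$-convergence, and the maximum principle forces $M(v,0^{p},r)\leq d$. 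You instead use boundary maximizers and volume averages for the $\limsup$, and for the $\liminf$ you compare $M(v,0^{p},r)=v(y^{\ast})\leq V(v,y^{\ast},\rho)\leq\liminf_{i}M(v_{i},0^{p},r+\rho)$ and then remove $\rho$ by a uniform Lipschitz bound on $M(v_{i},0^{p},\cdot)$ obtained from $K_{p}$-convexity, the uniform $L^{1}$ bound on the annulus and Lemma \ref{p-plane lemma} --- the same mechanism the paper uses inside Lemma \ref{almost implies every lemma}. Both arguments are correct. The paper's contradiction step is shorter and needs no quantitative control of $M(v_{i},0^{p},\cdot)$; your version needs the additional (true, but not completely trivial) facts that $M(v_{i},0^{p},r)=\max_{|y|=r}v_{i}(y)$, i.e.\ that the supremum over the open ball is attained on the sphere --- which follows from the maximum principle together with upper semicontinuity --- and that $M(v_{i},0^{p},\cdot)$ is $K_{p}$-convex. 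In exchange, your maximizer argument makes explicit the compactness that the paper's one-line passage from the pointwise bound $\limsup_{i}v_{i}(x)\leq v(x)$ to $\limsup_{i}M(v_{i},0^{p},r)\leq M(v,0^{p},r)$ leaves implicit, and it avoids mollifiers altogether; just be sure to extract the subsequence realizing the $\limsup$ before extracting convergent maximizers, and to keep $\rho$ small enough that all balls used stay inside the annulus where the $L^{1}$ convergence is available (which you do note).
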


\begin{proof}
First, by the property of subharmonic functions, for any $x\in B_{b}(0^{p})\setminus B_{a}(0^{p})$, we have
\begin{equation*}
v_{i}(x)\leq v_{i}*\phi_{\delta}(x) \text{~and~} \lim_{i\rightarrow\infty}v_{i}*\phi_{\delta}(x)=v*\phi_{\delta}(x),
\end{equation*}
where $\phi$ is a mollifier. It then follows that
\begin{equation*}
\limsup_{i\rightarrow\infty}v_{i}(x)\leq\lim_{\delta\rightarrow0}v*\phi_{\delta}(x)=v(x),
\end{equation*}
which implies
\begin{equation}\label{M S stable lemma equation3}
\limsup_{i\rightarrow\infty}M(v_{i},0^{p},r)\leq M(v,0^{p},r).
\end{equation}
Suppose we have
\begin{equation*}
\liminf_{i\rightarrow\infty}M(v_{i},0^{p},r)<M(v,0^{p},r),
\end{equation*}
then there exists a subsequence $\{v_{i_{k}}\}$ and a number $d$ such that
\begin{equation}\label{M S stable lemma equation4}
\lim_{k\rightarrow\infty}M(v_{i_{k}},0^{p},r)=\liminf_{i\rightarrow\infty}M(v_{i},0^{p},r)<d<M(v,0^{p},r).
\end{equation}
Then we get $v_{i_{k}}\leq d$ on $B_{r}(0^{p})$ when $k$ is sufficiently large. By the convergence in $L^{1}(B_{b}(0^{p})\setminus B_{a}(0^{p}))$, we obtain $v\leq d$ on $B_{r}(0^{p})\setminus B_{a}(0^{p})$. Since $v$ is subharmonic function, we have
\begin{equation*}
M(v,0^{p},r)\leq d,
\end{equation*}
which contradicts with (\ref{M S stable lemma equation4}). Therefore, we conclude that
\begin{equation}\label{M S stable lemma equation5}
\liminf_{i\rightarrow\infty}M(v_{i},0^{p},r)\geq M(v,0^{p},r).
\end{equation}
Combining (\ref{M S stable lemma equation3}) and (\ref{M S stable lemma equation5}), we prove (\ref{M S stable lemma equation1}).

For the proof of (\ref{M S stable lemma equation2}), by Fatou's lemma, it is clear that
\begin{equation*}
\int_{a}^{b}\left(\lim_{i\rightarrow\infty}\int_{\partial B_{r}(0^{p})}|v_{i}-v|\right)dr\leq\lim_{i\rightarrow\infty}\int_{B_{b}(0^{p})\setminus B_{a}(0^{p})}|v_{i}(x)-v(x)|dx\rightarrow0,
\end{equation*}
which implies
\begin{equation*}
\lim_{i\rightarrow\infty}S(v_{i},0^{p},r)=S(v,0^{p},r)
\end{equation*}
for almost every $r\in(a,b)$. Since $S(v_{i},0^{p},\cdot)$ and $S(v,0^{p},\cdot)$ are $K_{p}$-convex functions, by Lemma \ref{almost implies every lemma}, we obtain (\ref{M S stable lemma equation2}).
\end{proof}

\begin{lemma}\label{translation lemma}
Suppose that $A_{i}$ is a sequence of $p\times p$ orthogonal matrices and $z_{i}$ is a sequence of points. Let $v$ be a subharmonic function on $B_{R}(0^{n})$. If $\lim_{i\rightarrow\infty}z_{i}=0^{n}$ and $\lim_{i\rightarrow\infty}A_{i}=I_{p}$ ($I_{p}$ is the $p\times p$ identity matrix), then for any $r\in(0,R)$, we have
\begin{equation*}
\lim_{i\rightarrow\infty}\int_{B_{r}(0^{n})}|v(A_{i}x+z_{i})-v(x)|dx=0.
\end{equation*}
\end{lemma}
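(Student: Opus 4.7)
The plan is to reduce the statement to the standard $L^{1}$-continuity of translations and rotations, together with the change-of-variables formula for orthogonal maps. The subharmonicity hypothesis enters only to guarantee $v\in L^{1}_{\mathrm{loc}}(B_{R}(0^{n}))$, which in turn allows us to approximate $v$ by compactly supported continuous functions.

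First, I would fix $r\in(0,R)$ and choose $r'\in(r,R)$ together with an index $i_{0}$ so that for every $i\geq i_{0}$ and every $x\in\overline{B_{r}(0^{n})}$, one has $|A_{i}x+z_{i}|\leq|x|+|z_{i}|\leq r'$; this is possible since $A_{i}$ is orthogonal and $z_{i}\to 0^{n}$. Next, given $\epsilon>0$, I would use the density of $C_{c}(B_{R}(0^{n}))$ in $L^{1}(B_{r'}(0^{n}))$ to pick $g\in C_{c}(B_{R}(0^{n}))$ with $\|v-g\|_{L^{1}(B_{r'}(0^{n}))}<\epsilon$. Splitting via the triangle inequality,
\begin{equation*}
\int_{B_{r}(0^{n})}|v(A_{i}x+z_{i})-v(x)|\,dx\leq I_{1}(i)+I_{2}(i)+I_{3},
\end{equation*}
where $I_{1}(i)=\int_{B_{r}(0^{n})}|v-g|(A_{i}x+z_{i})\,dx$, $I_{2}(i)=\int_{B_{r}(0^{n})}|g(A_{i}x+z_{i})-g(x)|\,dx$, and $I_{3}=\int_{B_{r}(0^{n})}|g-v|(x)\,dx$.

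The estimate of $I_{3}$ is immediate, namely $I_{3}\leq\|v-g\|_{L^{1}(B_{r'}(0^{n}))}<\epsilon$. For $I_{1}(i)$, I would apply the change of variables $y=A_{i}x+z_{i}$, whose Jacobian has absolute value $1$ because $A_{i}$ is orthogonal; since the image of $B_{r}(0^{n})$ under this affine map lies in $B_{r'}(0^{n})$ for $i\geq i_{0}$, this yields $I_{1}(i)\leq\|v-g\|_{L^{1}(B_{r'}(0^{n}))}<\epsilon$. For $I_{2}(i)$, I would use that $g$ is uniformly continuous on $\mathbf{R}^{n}$ (being continuous with compact support), so $g(A_{i}x+z_{i})\to g(x)$ uniformly on $B_{r}(0^{n})$ as $A_{i}\to I$ and $z_{i}\to 0^{n}$; integrating gives $I_{2}(i)\to 0$.

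Combining these three estimates and sending $i\to\infty$ produces $\limsup_{i\to\infty}\int_{B_{r}(0^{n})}|v(A_{i}x+z_{i})-v(x)|\,dx\leq 2\epsilon$, and since $\epsilon$ is arbitrary, the lemma follows. There is no real obstacle here; the only minor point worth checking carefully is the inclusion $A_{i}(B_{r}(0^{n}))+z_{i}\subset B_{r'}(0^{n})$, which ensures both the change-of-variables step and the truncation via $g$ remain inside the domain of integrability.
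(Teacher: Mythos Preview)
Your proof is correct and follows essentially the same three-term splitting as the paper: approximate $v$ by a nicer function, control the two approximation errors via a change of variables (using that $A_{i}$ is orthogonal so the Jacobian is $1$ and the image of $B_{r}(0^{n})$ stays in a fixed larger ball), and show the middle term vanishes by uniform continuity. The only cosmetic difference is that the paper approximates by the smooth mollification $v_{\delta}=v\ast\phi_{\delta}$ rather than by a $C_{c}$ function $g$, but the structure of the argument is identical.
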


\begin{proof}
For convenience, we use $v_{\delta}$ to denote $v*\phi_{\delta}$, where $\phi_{\delta}$ is a mollifier. By the property of smooth approximation, it is clear that $v_{\delta}$ converges to $v$ in $L_{loc}^{1}(B_{R}(0^{n}))$. On the other hand, since $v_{\delta}$ is smooth, we have
\begin{equation*}
\lim_{i\rightarrow\infty}\int_{B_{r}(0^{n})}|v_{\delta}(A_{i}x+z_{i})-v_{\delta}(x)|dx=0.
\end{equation*}
Therefore, we obtain
\begin{equation*}
\begin{split}
&~~ \int_{B_{r}(0^{n})}|v(A_{i}x+z_{i})-v(x)|dx\\
\leq &~~ \int_{B_{r}(0^{n})}|v(A_{i}x+z_{i})-v_{\delta}(A_{i}x+z_{i})|dx+\int_{B_{r}(0^{n})}|v_{\delta}(A_{i}x+z_{i})-v_{\delta}(x)|dx+\int_{B_{r}(0^{n})}|v_{\delta}(x)-v(x)|dx\\
\leq &~~ \int_{B_{r}(z_{i})}|v(x)-v_{\delta}(x)|dx+\int_{B_{r}(0^{n})}|v_{\delta}(A_{i}x+z_{i})-v_{\delta}(x)|dx+\int_{B_{r}(0^{n})}|v_{\delta}(x)-v(x)|dx\\
\rightarrow & ~~ 0,
\end{split}
\end{equation*}
as desired.
\end{proof}

\begin{lemma}\label{tangential p-flow stable lemma}
Let $v_{i}$ and $v$ be subharmonic functions on $B_{2}(0^{n})$, and suppose that $v_{i}$ converge to $v$ in $L_{loc}^{1}(B_{2}(0^{n}))$. For any sequence of point $\{z_{i}\}\subset B_{1}(0^{n})$, if $z_{i}$ converge to $z$, then we have
\begin{equation*}
\lim_{i\rightarrow\infty}\int_{B_{1}(0^{n})}|(v_{i})_{z_{i},r}(x)-v_{z,r}(x)|dx,
\end{equation*}
for any $r\in(0,1)$.
\end{lemma}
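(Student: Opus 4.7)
The plan is to reduce the displayed integral, case by case on the Riesz characteristic $p\geq2$, to pieces that vanish by the $L^{1}_{\mathrm{loc}}$ convergence $v_{i}\to v$ and by the basepoint continuity $z_{i}\to z$, the latter handled by Lemma \ref{translation lemma} (and, for $p=2$, Lemma \ref{M S stable lemma}).

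For $p>2$, we have $(v_{i})_{z_{i},r}(x)-v_{z,r}(x)=r^{p-2}[v_{i}(z_{i}+rx)-v(z+rx)]$. Splitting with the triangle inequality gives two integrals: $r^{p-2}\int_{B_{1}(0^{n})}|v_{i}(z_{i}+rx)-v(z_{i}+rx)|\,dx$ and $r^{p-2}\int_{B_{1}(0^{n})}|v(z_{i}+rx)-v(z+rx)|\,dx$. The change of variables $y=z_{i}+rx$ converts the first into $r^{p-2-n}\int_{B_{r}(z_{i})}|v_{i}-v|\,dy$, which is dominated by $r^{p-2-n}\int_{B_{1+r}(0^{n})}|v_{i}-v|\,dy$ and vanishes by $L^{1}_{\mathrm{loc}}$ convergence. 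For the second, translating so the domain is centered at $z$ gives $r^{p-2-n}\int_{B_{r}(0^{n})}|\tilde v(y+(z_{i}-z))-\tilde v(y)|\,dy$ where $\tilde v(y):=v(z+y)$ is subharmonic on a ball containing $\overline{B_{r}(0^{n})}$; with $A_{i}=I_{n}$ and shifts $z_{i}-z\to0^{n}$, this tends to zero by Lemma \ref{translation lemma}.

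For $p=2$ the difference becomes
\[
(v_{i})_{z_{i},r}(x)-v_{z,r}(x)=\bigl[v_{i}(z_{i}+rx)-v(z+rx)\bigr]-\bigl[M(v_{i},z_{i},r)-M(v,z,r)\bigr].
\]
The first bracket is bounded in $L^{1}(B_{1}(0^{n}))$ exactly as above. For the $M$-term, introduce the translated subharmonic functions $w_{i}(y):=v_{i}(z_{i}+y)$ and $w(y):=v(z+y)$, defined on a ball containing the origin. The same estimate shows $w_{i}\to w$ in $L^{1}_{\mathrm{loc}}$, and since $M(v_{i},z_{i},r)=M(w_{i},0^{n},r)$ and $M(v,z,r)=M(w,0^{n},r)$, Lemma \ref{M S stable lemma} yields $M(v_{i},z_{i},r)\to M(v,z,r)$. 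Combining these two pieces with the constant being a pure scalar on $B_{1}(0^{n})$ (so its $L^{1}$ contribution is $\omega_{n}|M(v_{i},z_{i},r)-M(v,z,r)|$) gives the conclusion.

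The main thing to watch is that $v_{i}$ and $v$ are merely subharmonic, so pointwise manipulations are unavailable, and every step must stay at the level of $L^{1}$ convergence or the monotone structure of $M$; beyond that, the estimates are routine change-of-variable arguments. No delicate obstacle is anticipated.
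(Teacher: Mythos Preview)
Your proposal is correct and follows essentially the same approach as the paper: the same case split on $p$, the same triangle-inequality decomposition into an $L^{1}$-convergence piece and a basepoint-shift piece handled by Lemma \ref{translation lemma}, and for $p=2$ the same reduction of the $M$-term to Lemma \ref{M S stable lemma} via translated functions. The only cosmetic difference is that the paper recenters at $z$ (setting $\tilde v_{i}(x)=v_{i}(x+z_{i}-z)$) whereas you recenter at the origin (setting $w_{i}(y)=v_{i}(z_{i}+y)$); these are equivalent.
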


\begin{proof}
We split up into different cases.

\bigskip
\noindent
{\bf Case 1.} $p>2$.

\bigskip
For any $r\in(0,1)$, by Lemma \ref{translation lemma}, we have
\begin{equation*}
\begin{split}
&~~ \int_{B_{1}(0^{n})}|(v_{i})_{z_{i},r}(x)-v_{z,r}(x)|dx\\
\leq &~~ \int_{B_{1}(0^{n})}|(v_{i})_{z_{i},r}(x)-v_{z_{i},r}(x)|dx+\int_{B_{1}(0^{n})}|v_{z_{i},r}(x)-v_{z,r}(x)|dx\\
  = &~~  \int_{B_{r}(z_{i})}r^{p-2-n}|v_{i}(x)-v(x)|dx+\int_{B_{r}(0^{n})}r^{p-2-n}|v(x+z_{i})-v(x+z)|dx\\
\rightarrow &~~ 0,
\end{split}
\end{equation*}
as desired.

\bigskip
\noindent
{\bf Case 2.} $p=2$.

\bigskip
By the definition of tangential $2$-flow, we have
\begin{equation*}
\begin{split}
& \int_{B_{1}(0^{n})}|(v_{i})_{z_{i},r}(x)-v_{z,r}(x)|dx\\
\leq & \int_{B_{1}(0^{n})}|v_{i}(rx+z_{i})-v(rx+z)|dx+\int_{B_{1}(0^{n})}|M(v_{i},z_{i},r)-M(v,z,r)|dx.
\end{split}
\end{equation*}
By the similar argument in Case 1, we obtain
\begin{equation*}
\lim_{i\rightarrow\infty}\int_{B_{1}(0^{n})}|v_{i}(rx+z_{i})-v(rx+z)|dx=0
\end{equation*}
Hence, it suffices to prove $\lim_{i\rightarrow\infty}M(v_{i},z_{i},r)=M(v,z,r)$. Next, we define $\tilde{v}_{i}(x)=v_{i}(x+z_{i}-z)$ for every $x\in B_{1}(0^{n})$. It then follows that $M(\tilde{v}_{i},z,r)=M(v_{i},z_{i},r)$. It is clear that
\begin{equation*}
\begin{split}
& \int_{B_{1}(0^{n})}|\tilde{v}_{i}(x)-v(x)|dx\\
\leq & \int_{B_{1}(0^{n})}|v_{i}(x+z_{i}-z)-v(x+z_{i}-z)|dx+\int_{B_{1}(0^{n})}|v(x+z_{i}-z)-v(x)|dx\\
   = & \int_{B_{1}(z_{i}-z)}|v_{i}(x)-v(x)|dx+\int_{B_{1}(0^{n})}|v(x+z_{i}-z)-v(x)|dx\\
\rightarrow & ~ 0,
\end{split}
\end{equation*}
where we used Lemma \ref{translation lemma}. Hence, by Lemma \ref{M S stable lemma}, we obtain $\lim_{i\rightarrow\infty}M(v_{i},z_{i},r)=\lim_{i\rightarrow\infty}M(\tilde{v}_{i},z,r)=M(v,z,r)$.
\end{proof}

~~~~~~


\noindent{Jianchun Chu}\\
School of Mathematical Sciences, Peking University\\
Yiheyuan Road 5, Beijing, P.R.China, 100871\\
Email:{chujianchun@pku.edu.cn}


\begin{thebibliography}{10}

\bibitem {GTM137} \textit{S. Axler, P. Bourdon} and \textit{W. Ramey,} {Harmonic function theory,} Spring-Verlag, 1992.

\bibitem {ChCo} \textit{J. Cheeger and T. H. Colding,} {On the structure of spaces with Ricci curvature bounded below. I,} J. Differ. Geom. \textbf{46} (1997), no. 3, 406--480.

\bibitem {CHN13a} \textit{J. Cheeger, R. Haslhofer} and \textit{A. Naber,} {Quantitative stratification and the regularity of mean curvature flow,} Geom. Funct. Anal. \textbf{23} (2013), no. 3, 828--847.

\bibitem {CHN13b} \textit{J. Cheeger, R. Haslhofer} and \textit{A. Naber,} {Quantitative stratification and the regularity of harmonic map flow,} Calc. Var. Partial Differential Equations \textbf{53} (2015), no. 1--2, 365--381.

\bibitem {CN13a} \textit{J. Cheeger} and \textit{A. Naber,} {Lower bounds on Ricci curvature and quantitative behavior of singular sets,} Invent. Math. \textbf{191} (2013), no.2, 321--339.

\bibitem {CN13b} \textit{J. Cheeger} and \textit{A. Naber,} {Quantitative stratification and the regularity of harmonic maps and minimal currents,} Comm. Pure Appl. Math. \textbf{66} (2013), no. 6, 965--990.

\bibitem {CNV15} \textit{J. Cheeger, A. Naber} and \textit{D. Valtorta,} {Critical sets of elliptic equations,} Comm. Pure Appl. Math. \textbf{68} (2015), no. 2, 173--209.

\bibitem {HL5} \textit{F. R. Harvey} and \textit{H. B. Lawson, Jr,} {Dirichlet duality and the nonlinear Dirichlet problem,} Comm. Pure Appl. Math. \textbf{62} (2009), no. 3, 396--443.

\bibitem {HL6} \textit{F. R. Harvey} and \textit{H. B. Lawson, Jr,} {An introduction to potential theory in calibrated geometry,} Amer. J. Math. \textbf{131} (2009), no. 4, 893--944.

\bibitem {HL7} \textit{F. R. Harvey} and \textit{H. B. Lawson, Jr,} {Duality of positive currents and plurisubharmonic functions in calibrated geometry,} Amer. J. Math. \textbf{131} (2009), no. 5, 1211--1239.

\bibitem {HL8} \textit{F. R. Harvey} and \textit{H. B. Lawson, Jr,} {Dirichlet duality and the nonlinear Dirichlet problem on Riemannian manifolds,} J. Differential Geom. \textbf{88} (2011), no. 3, 395--482.

\bibitem {HL9} \textit{F. R. Harvey} and \textit{H. B. Lawson, Jr,} {Plurisubharmonicity in a general geometric context. Geometry and analysis,} No. 1, 363--402, Adv. Lect. Math. (ALM), 17, Int. Press, Somerville, MA, 2011.

\bibitem {HL10} \textit{F. R. Harvey} and \textit{H. B. Lawson, Jr,} {Geometric plurisubharmonicity and convexity: an introduction,} Adv. Math. \textbf{230} (2012), no. 4--6, 2428--2456.

\bibitem {HL11} \textit{F. R. Harvey} and \textit{H. B. Lawson, Jr,} {Existence, uniqueness and removable singularities for nonlinear partial differential equations in geometry,} Surveys in differential geometry. Geometry and topology, 103--156, Surv. Differ. Geom., 18, Int. Press, Somerville, MA, 2013.

\bibitem {HL12} \textit{F. R. Harvey} and \textit{H. B. Lawson, Jr,} {$p$-convexity, $p$-plurisubharmonicity and the Levi problem,} Indiana Univ. Math. J. \textbf{62} (2013), no. 1, 149--169.

\bibitem {HL3} \textit{F. R. Harvey} and \textit{H. B. Lawson, Jr,} {The equivalence of viscosity and distributional subsolutions for convex subequations - a strong Bellman principle,} Bull. Braz. Math. Soc. (N.S.) \textbf{44} (2013), no. 4, 621--652.

\bibitem {HL13} \textit{F. R. Harvey} and \textit{H. B. Lawson, Jr,} {Removable singularities for nonlinear subequations,} Indiana Univ. Math. J. \textbf{63} (2014), no. 5, 1525--1552.

\bibitem {HL4} \textit{F. R. Harvey} and \textit{H. B. Lawson, Jr,} {The restriction theorem for fully nonlinear subequations,} Ann. Inst. Fourier (Grenoble) \textbf{64} (2014), no. 1, 217--265.

\bibitem {HL1} \textit{F. R. Harvey} and \textit{H. B. Lawson, Jr,} {Tangents to subsolutions -- existence and uniqueness, Part I,} arXiv:1408.5797v2, 2014.

\bibitem {HL2} \textit{F. R. Harvey} and \textit{H. B. Lawson, Jr,} {Tangents to subsolutions -- existence and uniqueness, Part II,} arXiv:1408.5851v1, 2014.

\bibitem {HL14} \textit{F. R. Harvey} and \textit{H. B. Lawson, Jr,} {The Dirichlet problem with prescribed interior singularities,} Adv. Math. \textbf{303} (2016), 1319--1357.

\bibitem {Ho} \textit{L. H\"{o}rmander,} {Notions of convexity,} Birkh\"{a}user, 1994.

\bibitem {JN} \textit{W. Jiang} and \textit{A. Naber} {$L^{2}$ Curvature Bounds on Manifolds with Bounded Ricci Curvature,} arXiv:1605.05583v1, 2016.

\bibitem {NV14} \textit{A. Naber} and \textit{D. Valtorta,} {Volume estimates on the critical sets of solutions to elliptic PDEs,} arXiv:1403.4176v2, 2014.

\bibitem {NV15a} \textit{A. Naber} and \textit{D. Valtorta,} {Rectifiable-Reifenberg and the regularity of stationary and minimizing harmonic maps,} Ann. of Math. (2) \textbf{185} (2017), no. 1, 131--227.

\bibitem {NV15b} \textit{A. Naber} and \textit{D. Valtorta,} {The singular structure and regularity of stationary and minimizing varifolds,} arXiv:1505.03428v2, 2015.

\bibitem {NV16a} \textit{A. Naber} and \textit{D. Valtorta,} {Energy Identity for Stationary Yang Mills,} arXiv:1610.02898v1, 2016.

\bibitem {NV16b} \textit{A. Naber} and \textit{D. Valtorta,} {Stratification for the singular set of approximate harmonic maps,} arXiv:1611.03008v1, 2016.

\bibitem {NVV} \textit{A. Naber, D. Valtorta} and \textit{G. Veronelli,} {Quantitative regularity for $p$-harmonic maps,} arXiv:1409.8537v2, 2015.

\bibitem {Si} \textit{L. Simon,} {Theorems on regularity and singularity of energy minimizing maps,} Based on lecture notes by Norbert Hungerb\"{u}hler. Lectures in Mathematics ETH Z\"{u}rich. Birkh\"{a}user, Basel, 1996.




















\end{thebibliography}
\end{document}